\documentclass[11pt]{amsart}
\usepackage{amsmath}
\usepackage{mathrsfs}
\usepackage{mathrsfs,amsmath, tikz,verbatim, wasysym,xcolor, amsfonts, amssymb, amsthm, graphicx, hyperref, enumerate, xfrac, xspace, mathtools, amsrefs, comment}
\usetikzlibrary{arrows}
\usetikzlibrary{shapes.geometric}
\usetikzlibrary{matrix}
\usetikzlibrary{decorations.pathreplacing, decorations.markings}
\usepackage[utf8]{inputenc}
\usepackage[top=0.9in, bottom=0.9in, right=0.9in, left=0.9in]{geometry}
\usepackage{hyperref}
\usepackage{graphicx}
\usepackage{xcolor}
\usepackage{tikz-cd}
\usepackage{multicol}

\usepackage[capitalise]{cleveref}

\newtheorem{theorem}{Theorem}[section]
\crefname{theorem}{Theorem}{Theorems}
\newtheorem{lem}[theorem]{Lemma}
\crefname{lem}{Lemma}{Lemmas}
\newtheorem{proposition}[theorem]{Proposition}
\crefname{prop}{Proposition}{Propositions}
\newtheorem{cor}[theorem]{Corollary}
\crefname{cor}{Corollary}{Corollaries}
\newtheorem{qu}[theorem]{Question}
\crefname{qu}{Question}{Questions}

\crefname{conj}{Conjecture}{Conjectures}

\crefname{question}{Question}{Questions}

\theoremstyle{definition}
\newtheorem{dfn}[theorem]{Definition}
\crefname{dfn}{Definition}{Definitions}
\newtheorem{ex}[theorem]{Example}
\crefname{dfn}{Example}{Examples}
\newtheorem{rmk}[theorem]{Remark}
\crefname{rmk}{Remark}{Remarks}
\newtheorem{ntn}[theorem]{Notation}
\crefname{ntn}{Notation}{Notations}

\crefname{claim}{Claim}{Claims}

\numberwithin{theorem}{section}
\allowdisplaybreaks

\newenvironment{theorem_no_number}[1][]{\begin{trivlist}
\item[\hskip \labelsep {\bfseries Theorem \def\temp{#1}\ifx\temp\empty  #1\else  #1\fi
.}] \itshape}  {\end{trivlist}}
\newenvironment{cor_no_number}[1][]{\begin{trivlist}
\item[\hskip \labelsep {\bfseries Corollary \def\temp{#1}\ifx\temp\empty  #1\else  #1\fi
.}] \itshape}  {\end{trivlist}}
\numberwithin{theorem}{section}

\DeclareMathOperator{\im}{im}

\DeclareMathOperator{\Aut}{Aut}
\DeclareMathOperator{\Out}{Out}
\DeclareMathOperator{\expsum}{expsum}

\newcommand{\Z}{\mathbb{Z}}

\newcommand{\mc}[0]{\mathcal}
\newcommand{\ab}{\textnormal{\textbf{ab}}}
\newcommand{\DPAB}[1]{\mc{DP}(#1, \ab)}
\newcommand{\DP}[1]{\mc{DP}(#1)}
\newcommand{\DPRat}[1]{\mc{DP}(#1, \operatorname{Rat})}
\newcommand{\DPRec}[1]{\mc{DP}(#1, \operatorname{Rec})}

\keywords{equations in groups, one-relator groups, free-by-cyclic groups, decidability}
\subjclass[2020]{20F10, 20F65, 68Q45}
\title{The Diophantine Problem with constraints in some free-by-cyclic groups}
\author{Laura Ciobanu, Robert D. Gray and Alex Levine}
\address{Laura Ciobanu:
Technische Universit\"at Berlin, Institut f\"ur Mathematik, %Stra\ss e des 17. Juni 136, 
10623 Berlin, Germany
\and Heriot-Watt University, Department of Mathematics, EH14 4AS Edinburgh, UK}
\address{Robert D. Gray: School of Engineering, Mathematics and Physics, University of East Anglia, Norwich NR4 7TJ, UK} 
\address{Alex Levine: School of Engineering, Mathematics and Physics, University of East Anglia, Norwich NR4 7TJ, UK} 
\begin{document}
\begin{abstract}
    In this paper we explore the Diophantine Problem, that is, the solvability of equations, in one-relator and free-by-cyclic groups, showing that this is decidable in some cases and undecidable otherwise, depending on their abelianisation and centre. 

    In particular, we focus on a version of the Diophantine Problem which asks about the existence of solutions satisfying abelianisation constraints, and prove a number of results concerning the decidability of this problem for free-by-cyclic groups with centre, tree-GBS groups, cyclically pinched groups and the first hydra group.
\end{abstract}

\maketitle

\section{Introduction}
The question of decidability of (systems of) equations in groups is widely known as the {\em Diophantine Problem} for groups, and we denote it by $\mc{DP}$, or by $\mc{DP}(G)$ when it refers to a group $G$. We extend the $\mc{DP}$ by requiring that the solution sets satisfy certain constraints, and study this augmented problem in one-relator groups and more generally, free-by-cyclic groups. We give the most general statements if the groups have non-trivial centre, but we also obtain results for several groups with trivial centre. 

Among several types of constraints (see Section \ref{DP:constraints}), we focus on abelian predicates, which we also call \emph{abelianisation constraints}, and denote by $\ab$. These constraints require that the `abelian form' of the solutions, where any two symbols commute, satisfies a given set of equations as well; equivalently, the constraints can be viewed as equations in the abelianisation of $G$. This problem, denoted by $\DPAB{G}$ for a group \(G\), was introduced in \cite{CiobanuGarreta}, where it was shown to be undecidable in hyperbolic groups and right-angled Artin groups, if these have sufficiently large abelianisation; that is, they have abelianisation with torsion-free rank greater than $2$. 

Studying the Diophantine Problem with abelianisation constraints can often give
methods for studying the `plain' Diophantine Problem. For example, if the
commutator width in a group \(G\) is finite, then the commutator subgroup in
\(G\) is definable in \(\DP{G}\), and so \(\DP{G}\) and \(\DPAB{G}\) are
equivalent. This has been leveraged in Thompson's group \(F\) to prove
undecidability of the Diophantine Problem \cite{ElliottLevine}, using the
non-trivial fact that \(F\) has finite commutator width \cite{F-comm-width}. In
other cases, the commutator subgroup may not be definable in \(\DP{G}\), but one
can often use similar techniques to compare exponent sums of various elements,
as has been successful when showing the Diophantine Problems in various groups
are undecidable, including certain GBS groups \cite{BobAlexinprep}, groups of
the form \(F_3 \rtimes F_2\) \cite{BobAlexinprep} and one-relator groups
\cite{BobAlexBG}. An important open question in the area is also whether the
Diophantine Problem is decidable in the Grigorchuck group $\mathcal{G}$ and
related branch groups. Since $\mathcal{G}$ has finite commutator width \cite{BartholdiGrothLysenok}, \(\DPAB{\mathcal{G}}=\DP{\mathcal{G}}\), and so
understanding the Diophantine Problem with abelianisation constraints would settle this open
question.

When free-by-cyclic groups have non-trivial centre, the fact that the Diophantine Problem is decidable follows from \cite{equations_VDP} (see Theorem \ref{thm:DP}). The main goal of the paper is to show that a stronger result holds in these groups when their abelianisation is virtually $\Z$: the Diophantine Problem with abelianisation constraints is decidable. 

We use several characterisations of one-relator groups with centre and the related classes of groups described below. In particular, we will use the fact that all groups of type (ii) - (v) contain (i) (see Section \ref{seq:equivalences} for a justification of the implications): 
\begin{itemize}
\item[(i)] infinite one-relator groups with non-trivial centre (O);
\item[(ii)] generalised Baumslag-Solitar (GBS) groups with non-trivial centre (GBS);   
\item[(iii)] [finitely generated free]-by-cyclic groups with non-trivial centre (FbC);    

\item[(iv)] free-by-cyclic groups of the form $F \rtimes_\phi \Z$ for some finitely generated free group $F$ and some automorphism $\phi \in \Aut(F)$ that has finite order in $\Out(F)$ (Fb$\Z$f);
\item[(v)] virtually $F \times \Z$ for some finitely generated free group $F$ (vF$\Z$). 
\end{itemize}
$$ (i) \xRightarrow[]{\textrm{\cite{BaumslagTaylor68}}} (ii) \overset{\textrm{\cite{Lev15}}} \iff (iii) \overset{\textrm{\cite{Lev15}}} \iff (iv) \xRightarrow[]{Lemma ~ \ref{lem:finorder}} (v)$$

The main result of \cite{equations_VDP}, which states that the Diophantine Problem is
decidable in all groups that are virtually direct products of hyperbolic groups,
implies the following. 
\begin{theorem}\label{thm:DP}
    The Diophantine Problem is decidable in (v), and therefore in (i) -- (iv).
\end{theorem}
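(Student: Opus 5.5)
The plan is to deduce the statement directly from the main theorem of \cite{equations_VDP}, which says that the Diophantine Problem is decidable in every group that is virtually a direct product of hyperbolic groups. The class (v) consists of groups $G$ with a finite index subgroup $H \cong F \times \Z$, where $F$ is a finitely generated free group. The first step is to check that $F \times \Z$ is a direct product of hyperbolic groups. A finitely generated free group is hyperbolic, since its Cayley graph with respect to a free basis is a tree. The infinite cyclic group $\Z$ is also hyperbolic, being quasi-isometric to a line. Consequently $H$ is a direct product of two hyperbolic groups, $G$ is virtually a direct product of hyperbolic groups, and \cite{equations_VDP} gives decidability of $\DP{G}$.

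One point needs care here. The result of \cite{equations_VDP} requires $G$ to be finitely generated, or more precisely to be presented by a finite generating set in which equations are written. This holds because $G$ contains the finitely generated group $F\times \Z$ with finite index, so $G$ is itself finitely generated. I would also note that decidability of $\mc{DP}$ does not depend on the chosen finite generating set, since a change of generators translates systems of equations effectively. I would also quickly check whether \cite{equations_VDP} needs the finite index subgroup to be normal, or the decomposition to be given effectively. If normality is needed, I would pass to the normal core, which is a finite index subgroup of $F\times\Z$. By Lemma~\ref{lem:finorder}, or by reproving it for subgroups, that core is again of the form $F'\times\Z$ or is virtually so. I expect this bookkeeping about the exact hypotheses of \cite{equations_VDP} to be the only real obstacle, and it is a mild one.

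For the classes (i)--(iv), I would use the chain of implications displayed before the statement: (i) $\Rightarrow$ (ii) by \cite{BaumslagTaylor68}, (ii) $\Leftrightarrow$ (iii) $\Leftrightarrow$ (iv) by \cite{Lev15}, and (iv) $\Rightarrow$ (v) by Lemma~\ref{lem:finorder}. Every group in (i)--(iv) therefore lies in (v), and the decidability of its Diophantine Problem follows from the first part.
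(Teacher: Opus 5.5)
Your proposal is correct and follows the paper's argument: the paper invokes the main theorem of \cite{equations_VDP} for class (v), using that $F\times\Z$ is a direct product of hyperbolic groups, and passes to (i)--(iv) via the displayed chain of implications. Your normality aside is harmless, but Lemma~\ref{lem:finorder} is not the right justification for it. Instead, a finite index subgroup $N \leq F\times\Z$ is a central extension of its projection to $F$ (a free group) by $N\cap\Z\cong\Z$; this extension splits, so $N\cong F'\times\Z$ exactly.
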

Furthermore, the groups we are interested in (one-relator with centre, free-by-cyclic, GBS) are finite-index subgroups of direct products of specific hyperbolic groups (see Prop. \ref{fbyc-embed}), in the spirit of \cite{equations_VDP}, and the Diophantine Problem is well understood in these direct products. Thus we can reduce the Diophantine Problem (with or without constraints) in free-by-cyclic and one-relator groups with centre to the problem of solving equations in direct products of hyperbolic groups with recognisable (plus some additional) constraints. Moreover, when \(\DPAB{G}\) is decidable, we can describe the solutions sets to systems of equations in terms of formal languages, as EDT0L languages. This is a class of languages that describes the solutions in many classes of groups, in particular hyperbolic \cite{CE21} and abelian, which are relevant here (see \cite{CL25} for an overview). 

Our first main result is:

\begin{theorem}
        \label{fbyc-centre-decidable}
       Let \(G\) be a [finitely-generated free]-by-cyclic group with centre and virtually cyclic
       abelianisation. Then \(\DPAB{G}\) is decidable and EDT0L.
    \end{theorem}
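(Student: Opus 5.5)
The plan is to pass to a finite-index subgroup of the form \(F \times \Z\) and reduce \(\DPAB{G}\) to solving equations in a direct product of a free group and \(\Z\), with rational (recognisable) constraints. The constraints must encode both the finite-index coset structure and the abelianisation data. By Theorem~\ref{thm:DP} and the implications (iii) \(\Rightarrow\) (iv) \(\Rightarrow\) (v), we may write \(G = F \rtimes_\phi \Z\) with \(\phi\) of finite order \(n\) in \(\Out(F)\). After replacing the generator \(t\) of \(\Z\) by \(t^n\) composed with a suitable inner automorphism, we obtain a finite-index normal subgroup \(H \cong F \times \langle s \rangle\), where \(s = t^n c\) for some \(c \in F\) is central in \(H\).

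The first step is a standard reduction from a system over \(G\) to one over \(H\). Fix a finite transversal \(T\) of \(H\) in \(G\). Guess a coset representative for each variable, substituting \(X = X' \tau\) with \(\tau \in T\) and \(X' \in H\). Then rewrite each equation, using the conjugation action of \(T\) on \(H\) and the multiplication table of \(T\) modulo \(H\), as an equation over \(H\) with constants. In \(H = F \times \Z\), a system of equations splits into a system over \(F\) and a linear system over \(\Z\). The only coupling comes from the fact that conjugation by elements of \(T\) acts on \(F \times \Z\) by automorphisms, which may mix the \(F\) and \(\Z\) coordinates via a homomorphism \(F \to \Z\). The cleanest way to handle this is to treat \(H\) as a subgroup of a direct product of hyperbolic groups, as in Prop.~\ref{fbyc-embed}, and appeal to the known decidability and EDT0L description of equations in such products with recognisable constraints.

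The key step, and the main obstacle, is translating the abelianisation constraints. The hypothesis that \(G^{\mathrm{ab}}\) is virtually cyclic is what makes this tractable. Let \(\pi \colon G \to G^{\mathrm{ab}}\). Since \(G^{\mathrm{ab}}\) is \(\Z \oplus A\) with \(A\) finite, and \(t\) maps to an infinite-order element, the free part of \(\pi(g)\) is determined by the \(t\)-exponent sum of \(g\), i.e.\ by the image of \(g\) under \(G \to \Z\). It follows that \(\pi(F)\) is finite. An abelianisation constraint is a linear system in \(G^{\mathrm{ab}}\), and it therefore splits into two parts:
\begin{itemize}
\item[(a)] a linear system over \(\Z\) in the exponent sums of \(t\), which on \(H\) is a linear condition in the \(\Z\)-coordinates together with the guessed cosets;
\item[(b)] a condition in the finite group \(A\), which is a recognisable constraint on the \(F\)-coordinates.
\end{itemize}
Part (b) holds because the preimage of any element of \(A\) under \(F \to \pi(F)\) is a finite union of cosets of a finite-index subgroup, hence recognisable.

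I would verify carefully that the \(t\)-exponent sum of \(X' \tau\) is a linear function of the \(\Z\)-coordinate of \(X'\) and of \(\tau\); this holds because \(s\) has \(t\)-exponent \(n\) and \(F\) has \(t\)-exponent \(0\). With this in place, the whole problem becomes equations in \(F\) with recognisable constraints, which are decidable with EDT0L solutions by \cite{CE21}, together with a linear system over \(\Z\), which is EDT0L as well. These pieces are coupled only through finitely many guesses and linear homomorphisms \(F \to \Z\). The coupling linear forms are exactly where I expect difficulty. I would first try to absorb them by noting that any homomorphism \(F \to \Z\) coming from the action factors through \(\pi\) restricted to \(F\), which is finite, so these forms are bounded and can be guessed. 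The EDT0L property should then follow because finite unions, direct products of coordinates, and images under the substitution \(X = X' \tau\) preserve EDT0L, in the same way as in \cite{equations_VDP}.
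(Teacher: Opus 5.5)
\textbf{There is a genuine gap.} It is in the step you call a standard reduction from \(G\) to \(H = F \times \langle s\rangle\). This is not a routine step: it is unknown in general whether decidability of the Diophantine Problem passes from a finite-index subgroup to the whole group, and the reason is exactly the following. Substitute \(X = X'\tau\) with \(\tau = t^i\) and move the coset representatives to the right. Every variable occurring to the right of some \(\tau\) then appears as \(\tau X'\tau^{-1}\), whose \(F\)-coordinate is \(\phi^i(X'_F)\). Under your hypotheses \(\phi^i\) is outer whenever \(i\) is not a multiple of the order of \(\phi\) in \(\Out(F)\), and that order exceeds \(1\): if \(\phi\) were inner, \(G \cong F \times \Z\) would have abelianisation of torsion-free rank at least \(2\). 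So \(\phi^i(X'_F)\) is not a word in \(X'_F\) and constants. The rewritten system is a system of \emph{twisted} equations in \(F \times \Z\), not a system of equations over \(H\). None of the results you invoke (decidability and EDT0L for equations with rational constraints in free or hyperbolic groups) covers twisted equations.

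The coupling you single out as the main difficulty does not actually occur. Conjugation by \(t\) preserves the normal subgroup \(F\). For \(F\) of rank at least \(2\) it also fixes \(s\), since it maps \(Z(H) = \langle s\rangle\) to itself and preserves \(t\)-exponent sums. In any case, a homomorphism \(F \to \Z\) factoring through the finite group \(\pi(F)\) is trivial. Treating \(H\) as a subgroup of a direct product of hyperbolic groups does not help either, since \(H\) already is one; the twisting lives in \(G\), not in \(H\).

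\textbf{The missing idea} is to pass to a finite-index \emph{overgroup} in which the twisting becomes inner, rather than to a finite-index subgroup. Quotienting \(G\) by a suitable infinite cyclic central subgroup gives a virtually free group \(V\). The subgroup \(F\) embeds in \(V\), and \(\phi\) is realised there as conjugation by the image of \(t\). Proposition~\ref{fbyc-embed} shows that \(G\) embeds with finite index in \(V \times \Z\). A system over \(G\) is then literally a system over \(V \times \Z\), together with the recognisable constraints \(X \in G\). Each abelianisation constraint becomes a membership \(W \in [G,G]\) for an auxiliary variable \(W\).

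Your observation that \(\pi(F)\) is finite is equivalent to \([G,G]\) having finite index in \(F\) (Lemma~\ref{fbc-centre-comm-sbg}). Hence \([G,G]\) is finitely generated, so rational, and it lies in \([V \times \Z, V \times \Z] = [V,V] \times \{1\}\). Proposition~\ref{dir-prod-rat-constraints}, with \(\DPRat{V}\) decidable and EDT0L, then handles the equations together with the abelianisation constraints. The recognisable constraints are added afterwards via \cite{Levine23}*{Proposition 6.1}.

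Your splitting of the abelianisation constraints into a \(t\)-exponent part and a part in the finite group \(A = \pi(F)\) is correct and could be used inside this framework. It cannot be carried out inside \(H\) until the twisting by \(\phi^i\) has been dealt with.
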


    This immediately implies:
\begin{cor}
        \label{orgwc-decidable-cor}
       Let \(G\) be a one-relator group with centre and virtually cyclic
       abelianisation. Then \(\DPAB{G}\) is decidable and EDT0L. 
    \end{cor}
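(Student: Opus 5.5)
The corollary should follow by placing \(G\) inside the class covered by \cref{fbyc-centre-decidable}. Let \(G\) be a one-relator group with non-trivial centre and virtually cyclic abelianisation. First I rule out the finite case. A finite one-relator group is cyclic: it has a presentation \(\langle x \mid x^n\rangle\). In that case \(\DPAB{G}\) reduces to solving equations in a finite group together with linear constraints in \(\Z/n\Z\). This is decidable by exhaustive search over the finite solution space, and the solution sets are finite, hence EDT0L. So I may assume \(G\) is infinite, which places it in class (i).

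Next I use the chain of implications in the introduction. By \cite{BaumslagTaylor68}, (i) implies (ii), so \(G\) is a GBS group with non-trivial centre. By \cite{Lev15}, (ii) is equivalent to (iii), so \(G\) is a [finitely generated free]-by-cyclic group with non-trivial centre. This step needs a little care. I must check that the identification in \cite{Lev15} is an isomorphism of groups and not merely a commensurability statement. Only then is the abelianisation of the free-by-cyclic group the same as that of \(G\), namely virtually cyclic. Since the implication is stated as an equivalence of class membership for the same group \(G\), it is an isomorphism, and the abelianisation hypothesis transfers unchanged.

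Finally I apply \cref{fbyc-centre-decidable} to \(G\), which gives that \(\DPAB{G}\) is decidable and EDT0L. I also need decidability and the EDT0L property to be independent of the chosen finite generating set and presentation. The one-relator presentation and the free-by-cyclic presentation differ by a change of generators. Under this change, equations and abelianisation constraints translate effectively: each generator is replaced by a word, and the induced isomorphism of abelianisations is an integer linear map. EDT0L languages are closed under the corresponding free monoid morphisms. This invariance is the only potential obstacle, and it is routine. I would handle it by substituting words for constants and noting that images of EDT0L languages under homomorphisms remain EDT0L.
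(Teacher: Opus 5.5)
Your proposal is correct and follows essentially the paper's route: pass from the one-relator group with centre to a [finitely generated free]-by-cyclic group with centre, then apply \cref{fbyc-centre-decidable}. The paper cites \cite{Pietrowski} for the GBS step and adds that a free-by-cyclic presentation can be found by Tietze enumeration as in \cref{lem:helpful2}, which makes the algorithm uniform in the input presentation; hard-coding the isomorphism, as you do, already suffices for decidability in a fixed group, and your separate treatment of the finite cyclic case covers an edge case the paper passes over.
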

The class of groups in the statement of  \cref{fbyc-centre-decidable} are in fact exactly the tree-GBS groups, that is, the generalised Baumslag-Solitar groups whose defining graph (as a graph of groups) is a tree. Thus we have 
\begin{cor}
Let $G$ be a tree-GBS group. 
Then \(\DPAB{G}\) is decidable and EDT0L. 
\end{cor}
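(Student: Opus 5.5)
The plan is to deduce the corollary from \cref{fbyc-centre-decidable}. It suffices to show that every tree-GBS group $G$ is a [finitely generated free]-by-cyclic group with non-trivial centre and virtually cyclic abelianisation.

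\emph{Step 1: the centre.} Let $G$ be the fundamental group of a GBS graph of groups whose underlying graph is a finite tree $T$, with vertex groups $\langle a_v\rangle \cong \Z$. For each edge $e=(u,v)$ there is a relation $a_u^{p_e}=a_v^{q_e}$ with $p_e,q_e\neq 0$. Because $T$ has no cycles, there are no stable letters, so $G$ is generated by the $a_v$ subject only to these relations. Fix a root vertex $r$. Propagating along $T$, choose integers $N_v$ with $a_v^{N_v}$ all equal to a single element $z$. This is possible since the tree condition means there is no holonomy: take $z=a_r^{M}$ with $M$ a suitable common multiple of the products of the edge labels along the paths from $r$. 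Then $z$ is a power of every generator, so it commutes with every generator and hence is central. Also $z\neq 1$, since vertex groups embed in the graph of groups. If $T$ is a single vertex then $G=\Z$, which is trivially covered, or handled directly. So $G$ is a GBS group with non-trivial centre, and by the equivalence (ii)$\iff$(iii) from \cite{Lev15} it is [f.g. free]-by-cyclic with centre.

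\emph{Step 2: the abelianisation.} This is the main computational step. The abelianisation $G^{ab}$ is the quotient of $\Z^{V(T)}$ by the $|E(T)|=|V(T)|-1$ relations $p_e a_u - q_e a_v$. Its torsion-free rank is therefore at least $1$. To get rank exactly $1$, we show the relation vectors are linearly independent over $\mathbb{Q}$. Order the edges by pruning leaves: each leaf edge involves a leaf vertex, and that vertex appears in no other relation with nonzero coefficient. Induction on the number of vertices then shows the relation matrix has rank $|V|-1$. Hence $G^{ab}\cong \Z\oplus$(finite), which is virtually cyclic.

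Combining Steps 1 and 2, $G$ satisfies the hypotheses of \cref{fbyc-centre-decidable}, so $\DPAB{G}$ is decidable and EDT0L.

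The main obstacle is making the central element construction in Step 1 rigorous, in particular checking that the exponents can be chosen consistently along the tree. Here I would rely on the standard fact that in a GBS group, an element elliptic in all vertex groups whose conjugates agree is central. If the paper instead intends the reverse identification (that the groups of \cref{fbyc-centre-decidable} are exactly tree-GBS), only the direction above is needed for this corollary.
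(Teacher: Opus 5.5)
Your proposal is correct and follows the paper's route. The paper also deduces the corollary from the implication tree-GBS $\Rightarrow$ [f.g. free]-by-cyclic with non-trivial centre and virtually cyclic abelianisation (\cref{prop:TreeGBP}), obtaining the central element $a_0^{\prod_e p_e}$ essentially as you do (the opening paragraphs of the proof of \cref{fbyc-embed}) and using \cite{Lev15} for the free-by-cyclic structure, before applying \cref{fbyc-centre-decidable}; the only difference is that the paper cites \cite{Levitt07}*{Theorem 1.1} or \cite{Robinson11}*{Theorem 2} for the rank of the abelianisation, where your leaf-pruning proof that the $|V(T)|-1$ edge relations are $\mathbb{Q}$-linearly independent is a valid self-contained replacement (and your closing appeal to a ``standard fact'' is unnecessary, since your direct argument already shows $z$ is central).
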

In contrast in recent work it has been shown that there are GBS-groups with undecidable Diophantine Problem; see \cite{BobAlexinprep}. 
Using this connection we shall see that \cref{fbyc-centre-decidable} may also be applied to the more general class of tree-products of infinite cyclic groups (i.e. a graph product where all vertex groups are $\mathbb{Z}$ and the defining graph is a tree) to show the following. 
\begin{theorem}\label{thm:TreeProduct} Let $G$ be a tree product of infinite cyclic groups.  Then $G$ has decidable Diophantine Problem with EDT0L solutions.  Furthermore:  
\begin{enumerate} 
\item if all edge groups are infinite cyclic then \(\DPAB{G}\) is decidable and EDT0L; 
\item otherwise, \(\DPAB{G}\) is undecidable. 
  \end{enumerate}   
\end{theorem}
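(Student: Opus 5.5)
The plan is to split $G$ along its trivial edge groups and reduce each case to \cref{fbyc-centre-decidable} or to known results on free products. Write $G$ as the fundamental group of a tree $T$ of groups with all vertex groups $\Z$. Each edge group is a subgroup of the adjacent vertex groups, so it is either trivial or infinite cyclic. Deleting the edges with trivial edge group cuts $T$ into subtrees $T_1,\dots,T_k$. Then $G \cong G_1 * \cdots * G_k$, where $G_i$ is the fundamental group of the tree of groups on $T_i$. Every edge group in $T_i$ is infinite cyclic, so $G_i$ is a tree-GBS group.

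\textbf{Step 1: the tree-GBS factors.} The abelianisation of $G_i$ has torsion-free rank $1$: it is generated by the vertex generators, and each edge relation $a^p = b^q$ with $p,q\neq 0$ makes adjacent generators rationally dependent. Hence, as stated after \cref{orgwc-decidable-cor}, $G_i$ is a free-by-cyclic group with centre and virtually cyclic abelianisation. \Cref{fbyc-centre-decidable} then gives that $\DPAB{G_i}$ is decidable and EDT0L. If $k=1$ this is exactly case (1). It also gives the first assertion of the theorem in that case, since plain $\mc{DP}$ is the special case with empty constraints.

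\textbf{Step 2: the plain Diophantine Problem for $k\ge 2$.} I would invoke the known transfer results for free products. Diekert--Lohrey show that decidability of equations with rational constraints passes to free products. Its EDT0L refinement (in the spirit of \cite{CL25}) shows that EDT0L solution sets are also preserved. To apply these I need $\DPRat{G_i}$ decidable with EDT0L solutions, not merely $\DP{G_i}$. The reduction behind \cref{fbyc-centre-decidable} passes to finite-index subgroups of direct products of hyperbolic (free and cyclic) groups via Prop.~\ref{fbyc-embed}. I expect that reduction to carry recognisable constraints, which is what the free-product argument really uses: constraints coming from the normal forms $G_1*\cdots*G_k$ only ask which factor a syllable lies in. So I would check that the argument of \cref{fbyc-centre-decidable} carries these constraints, and then conclude.

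\textbf{Step 3: undecidability when $k\ge 2$.} Choose $a\in G_1$ and $b\in G_2$ of infinite order whose images in $G^{\ab}$ generate a copy of $\Z^2$; for instance, take vertex generators. The idea is to interpret Hilbert's tenth problem. Commutation equations in a free product pin elements down to cyclic subgroups: the centraliser of $ab$ in $G$ is cyclic, containing $\langle ab\rangle$ with finite index. Abelianisation constraints then read off exponents along these subgroups, so an integer $n$ can be encoded as an element $x$ with $[x,ab]=1$. Addition is immediate from multiplying such elements. For multiplication I would adapt the Ciobanu--Garreta technique \cite{CiobanuGarreta}: an equation such as $y = a^{n} b a^{n} b\cdots$, rigidified by conjugacy equations and the retraction $G\to \langle a\rangle * \langle b\rangle$, lets the exponent sums of $a$ and $b$ record a product $nm$.

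This step is the main obstacle. Ciobanu--Garreta needed torsion-free rank greater than $2$ in the hyperbolic case, while here the rank is only $k$, possibly exactly $2$. The extra leverage must come from the rigid structure of free products, where centralisers and roots are explicit, or from using elements of $G_1$ and $G_2$ whose abelianisations are rationally dependent. What I would try first is to encode $n\cdot m$ through the number of $b$-syllables in a word, controlled by one commutation equation and one abelianisation constraint.
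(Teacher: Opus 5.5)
\paragraph{Main gap: Step 3.} Your decomposition $G\cong G_1*\cdots*G_k$ and Step 1 match the paper. Step 3, however, is not yet a proof: you propose to rebuild a Hilbert's-tenth-problem encoding inside $G$, and you leave its key step (multiplication) open.

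The missing idea is that no new encoding is needed, because $G$ retracts onto a non-abelian free group. This is exactly what the paper's proof does.
\begin{enumerate}
\item Choose $v_i\in G_i$ whose image generates $G_i^{\ab}$ modulo torsion, which is $\Z$. Let $\chi_i\colon G_i\to\Z$ be the induced map, so $\chi_i(v_i)=1$. Then $g\mapsto v_i^{\chi_i(g)}$ is a retraction $G_i\to\langle v_i\rangle$.
\item By the universal property of free products these assemble into a retraction $r\colon G\to H=\langle v_1,\ldots,v_k\rangle\cong F_k$, with $k\ge 2$.
\item Vertex generators do not work in general. In $\langle a,b\mid a^2=b^3\rangle$ no homomorphism to $\Z$ sends $a$ or $b$ to a generator, so the retraction onto $\langle a\rangle*\langle b\rangle$ you mention need not exist.
\item Given a system over $H$ with abelianisation constraints, keep the equations and replace each constraint $w\in[H,H]$ by $w\in[G,G]$. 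The paper uses $w^p\in[G,G]$, where $p$ is the exponent of the torsion of $G^{\ab}$.
\item Solutions in $H$ survive because $[H,H]\le[G,G]$. Conversely, applying $r$ to a solution in $G$ yields one in $H$, because $r$ fixes the constants and $r([G,G])\subseteq[H,H]$.
\end{enumerate}
This reduces $\DPAB{F_k}$ to $\DPAB{G}$, and the former is undecidable by \cite{CiobanuGarreta}*{Theorem A}. Your worry that the rank of $G^{\ab}$ may be exactly $2$ is therefore no obstacle: that theorem covers free groups of every rank at least $2$.

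\paragraph{Smaller points.} In Step 2, the rational-constraint transfer cannot be applied as you first state it. Tree-GBS groups need not have decidable Diophantine Problem with rational constraints: the trefoil group above contains $F_2\times\Z$ with finite index, and $\DPRat{F_2\times\Z}$ is undecidable \cite{BobAlexinprep}. The paper instead uses only that each $G_i$, being virtually a direct product of hyperbolic groups, has decidable existential theory \cite{equations_VDP}*{Theorem 3.1}. It feeds this into the free-product result \cite{LohreySenizergues}*{Theorem 1}. Your fallback to recognisable constraints is in this spirit, but it needs a concretely cited transfer theorem with exactly those hypotheses.

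Separately, your claim that EDT0L solution sets pass to free products rests on an unspecified \emph{EDT0L refinement}, and no such result is supplied. The paper's own argument for $k\ge 2$ also establishes only decidability of the Diophantine Problem there.
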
    

When the torsion-free rank of the abelianisation is $\geq 3$, we obtain undecidability results.

\begin{theorem}
    \label{fbyc-big-ab}
    Let \(G\) be a [finitely generated free]-by-cyclic group with non-trivial centre and a torsion-free
    rank \(3\) or greater abelianisation. Then \(\DPAB{G}\) is undecidable.
\end{theorem}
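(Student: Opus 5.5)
We reduce from the undecidability result of \cite{CiobanuGarreta}: for a group containing a free subgroup, or more to the point for free groups themselves, the Diophantine Problem with abelianisation constraints is undecidable once the abelianisation has torsion-free rank at least $3$. The mechanism there is to encode Hilbert's tenth problem. Polynomial equations over $\Z$ reduce to systems of linear equations together with multiplicative relations $z = xy$. Linear equations are imposed directly as abelianisation constraints. The product relation is obtained from commutator-type equations, such as $[u,v]=1$ in a free group forcing $u,v$ to be powers of a common element, combined with exponent-sum bookkeeping in two or three independent directions of the abelianisation.

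By the equivalences (ii)--(iv) and Lemma \ref{lem:finorder}, $G = F\rtimes_\phi \Z$ with $\phi$ of finite order $m$ in $\Out(F)$. After composing with an inner automorphism we may take $\phi^m$ to be conjugation by some $c\in F$. Then $z = t^m c^{-1}$ is central, and $H = F\times\langle z\rangle$ is a finite-index normal subgroup. The abelianisation of $G$ is $F^{ab}_\phi \oplus \Z$, where $F^{ab}_\phi$ denotes the $\phi$-coinvariants. So torsion-free rank at least $3$ means the coinvariants have rank at least $2$. Choose $a,b\in F$ whose images in $G^{ab}$ are independent of each other and of $t$. The subgroup $\langle a,b\rangle\le F$ is free of rank $2$, and the map $\langle a,b, t\rangle\to G^{ab}$ has image of rank $3$.

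Next we transport the Ciobanu--Garreta construction into $G$. In a free group $F_2=\langle a,b\rangle$ with constraints recording exponent sums in $a$ and $b$, they express $x\mapsto a^x$, addition, and multiplication. The tools are equations such as $[X,a]=1$, which forces $X\in\langle a\rangle$ (here, modulo the centre), together with conjugation tricks of the form $X\, b\, Y$ to couple exponents. In $G$, the equation $[X,a]=1$ gives $X$ in the centraliser of $a$. Since $a\in F\le H$ and $H=F\times\langle z\rangle$, this centraliser is, up to finite index, $\langle \sqrt a\rangle\times\langle z\rangle$. We kill the $z$-component and the roots using abelianisation constraints, by demanding that the $t$-coordinate of $X$ is $0$ and that its class is an integer multiple of $[a]$. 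We also require $X\in F$, which is expressible since $F$ is exactly the set of elements with $t$-exponent $0$ in the $\Z$ factor of $G^{ab}$. The third independent direction, $t$ or $z$, supplies the extra coordinate that the rank-$3$ hypothesis in \cite{CiobanuGarreta} uses.

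The main obstacle is the step showing that the multiplication gadget still forces $z=xy$ in $G$, rather than admitting spurious solutions using elements outside $\langle a,b\rangle$ or twisted by $t$. First we will check that all auxiliary variables can be forced into $H$. This is done by constraining their $t$-exponent to be divisible by $m$: the finite quotient $G/H$ is cyclic generated by $t$, so membership in $H$ is an abelianisation constraint. The centraliser computations then take place in $F\times\Z$, where they match those in $F$ up to the central factor. The central factor is in turn pinned down by the $z$-coordinate constraint. If root-closure issues arise, that is, if $a$ is a proper power in $F$ or its centraliser is larger, we replace $a,b$ by suitable primitive elements or powers, which preserves the independence of their images in $G^{ab}$. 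Solutions in $G$ then correspond exactly to integer solutions of the Diophantine equation, which gives undecidability.
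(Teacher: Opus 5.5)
Your proposal has a genuine gap at the step the construction depends on: ``killing the $z$-component'' by demanding that the $t$-coordinate of $\ab(X)$ vanish, and the claim that $X\in F$ is expressible because $F$ is the set of elements of $t$-exponent $0$. An abelianisation constraint is a linear equation over $\Z$ in $G^{\ab}$, equivalently a condition $w\in[G,G]$. Its integer coefficients act on all coordinates of $G^{\ab}$ at once, so it cannot isolate the $t$-coordinate, and ``$t$-exponent $0$'' is not such an equation.

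This is not a technicality. Take $G=F_2\times\Z$, which satisfies the hypotheses with $G^{\ab}\cong\Z^3$. The solution set of any system with abelianisation constraints in $G$ is a product $S_1\times S_2$. Here $S_2$ is the solution set of an integer linear system $M\mathbf n=\mathbf g$, and every $\mathbf u\in S_1$ satisfies $M\,\ab(\mathbf u)=\mathbf g'$ in $\Z^2$ with the \emph{same} matrix $M$. Hence if a $\DPAB{G}$-definable set has $\Z$-projection $\{0\}$, its image in $F_2^{\ab}$ is a single point. So neither $F_2\times\{0\}$ nor $\langle a\rangle\times\{0\}$ is $\DPAB{G}$-definable, and no gadget of the kind you describe can produce $x\mapsto a^x$ on the nose. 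Your justification that membership in $H$ is an abelianisation constraint has the same flaw: a congruence on one coordinate is not a linear equation in $G^{\ab}$. The central direction is therefore an obstruction, not the ``extra coordinate'' Ciobanu--Garreta need. Their undecidability results for free and hyperbolic groups need only torsion-free rank $2$.

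The missing idea is to work modulo the centre instead of trying to pin the central factor down. Encode $n$ by the coset $a^nZ(G)$, and replace each constraint $w\in[G,G]$ by $w\in[G,G]Z(G)$. The latter is $\DPAB{G}$-definable because $Z(G)$ is $\DP{G}$-definable. Done systematically, this is Proposition~\ref{qt-by-centre}: decidability of $\DPAB{G}$ implies decidability of $\DPAB{G/Z(G)}$. The paper then uses two facts from Bridson--Piwek:
\begin{itemize}
\item $Z(G)$ is cyclic, so $(G/Z(G))^{\ab}$ has torsion-free rank at least $2$; this is where the rank-$3$ hypothesis is consumed.
\item $G/Z(G)$ is virtually free, so Ciobanu--Garreta's Theorem B for hyperbolic groups applies.
\end{itemize}
You genuinely need that hyperbolic version, not just the free-group case. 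In $G/Z(G)$ the centraliser of the image of $a\in F$ is $C_G(a)/Z(G)$, which can be a proper finite-index overgroup of the free-group centraliser because of elements $ut^k$ outside $H$. That is precisely the spurious-solution issue your sketch leaves unresolved.
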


% \begin{proposition}
%        \label{big-ab-if-stuff-defable}
%        Let \(G = F \rtimes \Z\) where \(F\) is finitely generated free. If
%        there exist \(h_1, h_2 \in F\) such that \(\langle h_1 \rangle\) and
%        \(\langle h_2 \rangle\) are \(\DPAB{G}\)-definable and \(\ab_G(\langle
%        h_1, h_2 \rangle) \cong \Z^2\), then \(\DPAB{G}\) is undecidable.
%   \end{proposition}

%% Comment about finite index 
We note that while we do utilise finite index subgroups and extensions in our reasoning in this paper, in general it is unknown whether decidability of the Diophantine Problem is preserved under taking finite index subgroups or extensions. Also, \(\DPAB{G}\) is know not to be preserved when taking finite index subgroups. 
It is known that for every RAAG, there is a corresponding RACG that contains it as a subgroup of finite index.
Now in \cite{CiobanuGarreta} it is proved that 
(a) If $G$ is a right-angled Artin group that is not abelian then 
\(\DPAB{G}\) is undecidable, and also that 
(b) If $H$ is a right-angled Coxeter group then \(\DPAB{H}\) is decidable. Hence if we take any non abelian RAAG $G$ it has a finite index extension $H$ with $G \leq H$ of finite index (the RACG described above) such that \(\DPAB{H}\) is decidable  
but \(\DPAB{G}\) has undecidable with $G$ a finite index subgroup of $H$.     
%%In particular if we take the RACG $K$ with graph the path $P_3$ with three vertices then \(\DPAB{K}\) is decidable even though $K$ embeds a subgroup $\overline{K}$ of finite index where $\overline{K}$ is isomorphic to the RAAG on $P_3$, that is $\overline{K} \cong F_2 \times \mathbb{Z}$, and \(\DPAB{\overline{K}}\) is undecidable. 

    For groups with trivial centre we have the following results.

    \begin{theorem}
    Let \(G = \langle a, b, t \mid tat^{-1} = ab, tbt^{-1} = b \rangle\) be the `first hydra group'. Then
    \(\DPAB{G}\) is undecidable
\end{theorem}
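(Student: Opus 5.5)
We reduce Hilbert's tenth problem to \(\DPAB{G}\) by interpreting integer multiplication through exponent sums. The abelianisation of \(G\) is \(\Z^2\), generated by the images of \(a\) and \(t\): the relation \(tat^{-1}=ab\) forces \(b\) to be trivial in \(\ab(G)\), while \(tbt^{-1}=b\) imposes nothing further. The subgroup \(H=\langle b,t\rangle \cong \Z^2\) is abelian. The key structural fact is the action of \(t\) on \(F(a,b)\), namely \(t^n a t^{-n} = ab^n\). Hence an equation involving conjugation by \(t\) lets us read the exponent of \(t\) as an exponent of \(b\). Exponent sums in \(b\), however, are invisible in \(\ab(G)\). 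So instead we use constraints to control the \(a\)- and \(t\)-exponents, and use equations to transfer information.

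We proceed in the following steps.

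\textbf{Step 1 (definability).} Show that the cyclic subgroups \(\langle t\rangle\), \(\langle b\rangle\) and \(\langle a\rangle\) (or suitable substitutes) are definable by equations, possibly together with abelianisation constraints.
\begin{itemize}
\item The centraliser of \(b\) is \(\langle b,t\rangle\), and the equation \([x,b]=1\) defines it.
\item Adding the constraint that the \(t\)-exponent of \(x\) is \(0\) cuts this down to \(\langle b\rangle\).
\item For \(\langle t\rangle\), take \([y,t]=1\). The centraliser of \(t\) should be \(\langle b,t\rangle\), since \(t\) acts on \(F(a,b)\) fixing exactly \(\langle b\rangle\). That is only \(\Z^2\), so we must separate \(t^n\) from \(b^m t^n\). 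Here we use an auxiliary trick: an element \(z=t^n\) is characterised by \([z,t]=1\) together with \(z a z^{-1} a^{-1} \in \langle b\rangle\). But \(b^m t^n\) also satisfies this second condition, so we will instead pin down the \(b\)-part by a further equation, e.g. requiring \(z\) to commute with a fixed element such as \(at a^{-1}\) modulo checks.
\end{itemize}
If a clean definition of \(\langle t\rangle\) fails, we will work with the pair \((x,y)\in\langle b\rangle\times\langle b,t\rangle\) directly.

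\textbf{Step 2 (multiplication).} Encode the product \(n\cdot m\). Given \(z\) with \(t\)-exponent \(n\) (read from the constraint) and \(w=b^m\), we want to express \(b^{nm}\). We use the identity \(t^n a^k t^{-n} = (ab^n)^k\), or more usefully \(t^n (a b^{m}) t^{-n}=ab^{n+m}\), which gives only addition. For multiplication, iterate: \(x = a^m\) (with \(m\) read via the \(a\)-exponent constraint) conjugated by \(t^n\) gives \((ab^n)^m\), which encodes \(nm\) inside a word. To extract \(nm\) as an exponent sum, consider \(u = t^n a^m t^{-n} a^{-m}\). This lies in the commutator subgroup and equals a product whose \(b\)-content is \(nm\) under the homomorphism \(F(a,b)\to\Z\) counting \(b\).

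\textbf{Step 3 (reading \(nm\) in the abelianisation).} Since \(b\) is trivial in \(\ab(G)\), we need a device converting a \(b\)-exponent into an \(a\)- or \(t\)-exponent. We will use an equation forcing \(u\) to equal \(c b^k c^{-1}\)-type products, identifying the \(b\)-count of \(u\) with the exponent of a variable \(v\in\langle b\rangle\) via \(u\,v^{-1}\in[F,F]\)-type equations. We then transfer \(v=b^k\) to \(t^k\) using \(t^k a t^{-k}=a v\), which is an equation in the variables \(s\in\langle t\rangle\) and \(v\). The constraint on the \(t\)-exponent of \(s\) then reads off \(k=nm\). Combining addition (through exponent sums directly) and multiplication yields arbitrary Diophantine systems over \(\Z\). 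This contradicts Matiyasevich's theorem, so \(\DPAB{G}\) is undecidable.

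The main obstacle is Step 3: proving that \(u\) has \(b\)-exponent exactly \(nm\) in a way expressible by equations, since membership in \([F,F]\) is not obviously definable. We would first try to avoid it by choosing \(u\) of the specific form \(t^n a^m t^{-n}\) and comparing it with \(a^m\) through an explicit equation \((a v')^m\)-style. This fails for variable \(m\), so we would fall back on the trick with commuting elements and the definable centraliser \(\langle b,t\rangle\).
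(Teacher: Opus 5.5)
Your Step~3 is where the whole proof lives, and it is not supplied. The constraint $uv^{-1}\in[F,F]$ is not available in $\DPAB{G}$. Abelianisation constraints only see $\ab(G)\cong\Z^2$, in which $b$ is trivial, so $[G,G]=\{w\in F(a,b)\mid \expsum_a(w)=0\}$. For $u=t^na^mt^{-n}a^{-m}=(ab^n)^ma^{-m}$, the condition $uv^{-1}\in[G,G]$ therefore holds for \emph{every} $v\in\langle b\rangle$ and carries no information. Nothing in the proposal converts the $b$-exponent sum $nm$ of $u$ into something that equations plus $\ab$-constraints can read, and the fallback ``trick with commuting elements'' is never specified. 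So multiplication is not encoded, and there is no reduction from Hilbert's tenth problem. Your transfer $sas^{-1}=av$ between $t^k$ and $b^k$ is valid, but it pushes information \emph{into} the invisible $b$-coordinate rather than out of it.

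Step~1 also contains errors, though repairable ones. The automorphism $\phi\colon a\mapsto ab,\ b\mapsto b$ fixes $aba^{-1}$ as well as $b$, so $C_G(t)=\langle aba^{-1},b,t\rangle\cong F_2\times\Z$, not $\langle b,t\rangle$. Your test is also miscomputed: $t^nat^{-n}a^{-1}=ab^na^{-1}\notin\langle b\rangle$ for $n\neq 0$. With the order $a^{-1}zaz^{-1}$, however, $z=b^mt^n\in C_G(b)$ gives $a^{-1}b^mab^{n-m}$, which lies in $\langle b\rangle$ if and only if $m=0$. So that test does define $\langle t\rangle$ inside $C_G(b)$; the paper instead uses $\langle t\rangle=C_G(b)\cap C_G(aba^{-1})$.

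The idea you are missing is to never define a $b$-exponent sum at all. Instead, let the $b$-bookkeeping happen inside a centraliser computation whose \emph{answer} is visible in $\ab(G)$. The paper's key lemma is $\ab(C_G(at^k))=\{\ab(a^\ell t^{k\ell})\mid \ell\in\Z\}$, and it has a short proof in your own language:
\begin{itemize}
\item If $ut^j$ with $u\in F(a,b)$ commutes with $at^k$, then $uab^j=a\phi^k(u)$ in $F(a,b)$.
\item Since $\expsum_b(\phi^k(u))=\expsum_b(u)+k\expsum_a(u)$, comparing $b$-exponent sums in the free group gives $j=k\expsum_a(u)$.
\item Conversely, $(at^k)^\ell$ realises every $\ell$.
\end{itemize}
Consequently, with $\langle a\rangle=C_G(a)$ and $\langle t\rangle$ definable, consider the conditions $Z,W\in\langle t\rangle$, $V\in\langle a\rangle$, $[Y,aZ]=1$ and $\ab(Y)=\ab(VW)$. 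The tuples $(Z,V,W)$ satisfying them are exactly $\{(t^k,a^\ell,t^{k\ell})\mid k,\ell\in\Z\}$, which is the graph of multiplication. The case $k=1$, namely the set $[G,G]\,C_G(at)=\{ut^r\mid \expsum_a(u)=r\}$, supplies the transfer between $a$-exponents and $t$-exponents needed to place all three integers in a single copy of $\Z$. Addition is just multiplication inside that definable cyclic subgroup, and this completes the reduction from Hilbert's tenth problem.
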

We note that whether the first hydra group (or any other hydra group) has a decidable Diophantine Problem is an open problem.
%%
%%, and it is also open for the other hydra groups.  
%%{\color{blue} Bob: Would it be worth mentioning that the ordinary DP remains open for the first hydra group (and I suppose it is open for all the hydra groups in fact)?}. 

Recall that a \emph{cyclically pinched
group $G$} is a free product of two finite rank free groups amalgamated over an
infinite cyclic group; that is, 
\[G=F(a_1, \dots, a_n)\ast_{\mathbb{Z}} F(b_1,
\dots, b_m),
\]
and the amalgamation is of the form $\langle U \rangle=\langle V
\rangle \cong \mathbb{Z}$, where $U \in F(a_1, \dots, a_n)$ and $V\in F(b_1,
\dots, b_m)$ are non-trivial. By noticing that these groups are relatively hyperbolic with sufficiently well-behaved parabolics, we are able to apply existing
combination results to obtain the following:
\begin{cor}
\label{DP-cyc-pinch}
The existential theory (and hence the Diophantine Problem) is decidable in all cyclically pinched groups.
\end{cor}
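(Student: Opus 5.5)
The plan is to deduce the corollary from known combination theorems for the existential theory of relatively hyperbolic groups. The relevant result is due to Dahmani, extended by Dahmani--Groves. It says that if $G$ is hyperbolic relative to a finite family of subgroups, each of which has decidable existential theory with constants, then $G$ has decidable existential theory. In Dahmani's formulation the parabolics must be finitely generated and satisfy a mild extra hypothesis. I would use the version in which the parabolics are finitely generated virtually abelian, or more generally have decidable positive existential theory with constants and rational constraints. The steps are as follows.

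First, I reduce to the case where $U$ and $V$ are not proper powers. Let $G=F(a_1,\dots,a_n)\ast_{U=V}F(b_1,\dots,b_m)$. If $U=U_0^k$ with $U_0$ not a proper power, the group need not be hyperbolic. Still, by a theorem of Bestvina--Feighn (combination theorem), together with the characterisation of when amalgams of free groups over cyclic subgroups are hyperbolic, $G$ is hyperbolic exactly when neither $U$ nor $V$ is a proper power. In that case we may simply quote Makanin, Rips--Sela and Dahmani--Guirardel: the existential theory of a hyperbolic group is decidable.

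In general, I would use the following structure. The maximal cyclic subgroups $\langle U_0\rangle$ and $\langle V_0\rangle$ containing $U$ and $V$ are malnormal in the respective free factors. Then $G$ is hyperbolic relative to the single subgroup
\[P=\langle U_0, V_0\rangle \cong \langle U_0,V_0 \mid U_0^k=V_0^l\rangle,\]
a torus-knot type group (possibly $\Z$ or $\Z^2$-like degenerate cases when $k=l=1$). To justify this, I would apply the combination theorem for relative hyperbolicity of amalgams. This is the Dahmani, or Osin / Bigdely--Wise, version: amalgamating two groups, each hyperbolic relative to a subgroup, along a common subgroup of the parabolics yields a group hyperbolic relative to the amalgam of the parabolics. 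Each free factor $F(a_i)$ is hyperbolic relative to $\langle U_0\rangle$, because maximal cyclic subgroups of free groups are malnormal and quasiconvex. Similarly $F(b_j)$ is hyperbolic relative to $\langle V_0\rangle$. Hence $G$ is hyperbolic relative to $\langle U_0\rangle\ast_{U_0^k=V_0^l}\langle V_0\rangle = P$.

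Second, I verify that $P$ has decidable existential theory with constants. $P$ is a one-relator group with centre $\langle U_0^k\rangle$, and it is virtually $F\times\Z$. By the equations-in-virtually-direct-products-of-hyperbolic-groups result quoted as \cref{thm:DP}, together with the standard extension to disjunctions and inequations, $P$ has decidable existential theory. One could instead use the more refined Prop.~\ref{fbyc-embed} reduction later in the paper. Inequations are handled since $w\neq 1$ can be expressed in these direct products using the hyperbolic factors' existential theories, which Dahmani--Guirardel show include rational constraints.

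The main obstacle is matching the exact hypotheses of the combination theorem to what is known about $P$. Dahmani's theorem requires the existential theory of the parabolic with constants, and in some versions with rational constraints or a form of equational Noetherianity. Quoting \cref{thm:DP} alone only gives the Diophantine Problem, not full existential theory. I would first try to cite the Dahmani--Guirardel / Ciobanu--Holt--Rees results: existential theory with rational constraints is decidable in virtually direct products of hyperbolic groups, or at least in $F\times\Z$. Then I would pass to $P$ via the finite-index structure, writing elements of $P$ as coset representatives times elements of $F\times\Z$, which converts existential sentences in $P$ into finite disjunctions of sentences in the finite-index subgroup with rational constraints. Finally, I note that the Diophantine Problem is the positive fragment of the existential theory.
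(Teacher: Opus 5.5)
Your proposal is correct and follows essentially the paper's route. You treat the hyperbolic case separately, otherwise view $G$ as hyperbolic relative to the torus-knot group $\langle \bar u,\bar v\mid \bar u^p=\bar v^q\rangle$, get its existential theory from the result on virtually direct products of hyperbolic groups, and finish with Dahmani's theorem for torsion-free parabolics.

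Two corrections:

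\begin{itemize}
\item $G$ is already hyperbolic when at least one of $U,V$ is not a proper power, not only when neither is. This is harmless, since your relative argument covers that case with $P\cong\Z$.
\item Your rational-constraints fallback is unavailable, because $F_2\times\Z$ has undecidable Diophantine Problem with rational constraints. Instead, embed the torus-knot group with finite index in $V\times\Z$ via Proposition~\ref{fbyc-embed}, as the paper does. Then only recognisable constraints are needed, and you avoid your descent to a finite-index subgroup via coset representatives, which would produce twisted equations rather than plain ones.
\end{itemize}
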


Cyclically pinched groups provide a rare example of a class of groups where
the decidability of the Diophantine Problem with abelianisation constraints
can be fully classified, without a `gap' corresponding to a specific
abelianisation. In this case, those with virtually cyclic abelianisation
are amongst the decidable examples.

 \begin{theorem}\label{thm:cycpinDPAB}
        Let \(G\) be a cyclically pinched group. Then \(\DPAB{G}\) is decidable
        if and only if \(G\) has exactly two generators. In the
        case that \(G\) has two generators, \(\DPAB{G}\) is EDT0L.
    \end{theorem}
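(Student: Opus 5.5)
The plan is to split according to the number of generators $n+m$ of $G = F(a_1,\dots,a_n)\ast_{\langle U\rangle=\langle V\rangle} F(b_1,\dots,b_m)$. Since $U,V$ are non-trivial, $n,m\geq 1$.

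\emph{Case $n+m=2$.} Here $n=m=1$, so $U=a^p$ and $V=b^q$ with $p,q\neq 0$, and $G=\langle a,b \mid a^p=b^q\rangle$ (after replacing $b$ by $b^{-1}$ if needed). This is a one-relator group with non-trivial centre: $a^p$ is central. Its abelianisation is $\Z^2/\langle (p,-q)\rangle$, which has torsion-free rank $1$ and so is virtually cyclic. Hence Corollary~\ref{orgwc-decidable-cor} applies directly, giving that $\DPAB{G}$ is decidable and EDT0L. The EDT0L conclusion comes for free from that corollary.

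\emph{Case $n+m\geq 3$.} The aim is undecidability, by reduction from Hilbert's tenth problem in the spirit of \cite{CiobanuGarreta}. Without loss of generality $n\geq 2$.

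1. I would find two elements $x,y$ in a free factor, say $F(a_1,\dots,a_n)$, together with a third element $z$, such that the cyclic subgroups $\langle x\rangle$, $\langle y\rangle$, $\langle z\rangle$ are definable by equations with $\ab$ constraints. The natural device is the centraliser equation $[X,g]=1$. In a cyclically pinched group the centraliser of an element not conjugate into the amalgamated subgroup is cyclic, since such elements act hyperbolically on the Bass--Serre tree with cyclic edge stabilisers. The abelianisation constraints can then be used to pin down generators: $X$ commuting with $g$ forces $X=r^k$ for a root $r$ of $g$, and the abelian image records $k$.

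2. I need the images of the chosen elements in $G^{ab}$ to span a free abelian group of rank at least $2$. One should be able to arrange rank $2$ using two generators whose images are independent modulo the relation $\ab(U)=\ab(V)$. This is possible because $G^{ab}=\Z^{n+m}/\langle \ab(U)-\ab(V)\rangle$ has torsion-free rank at least $n+m-1\geq 2$.

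3. With definable cyclic subgroups having independent abelian images, I would encode integer multiplication. Take elements $x$ and $y$ chosen so that $x$ and $y$ have independent images and a product such as $xy$ also has cyclic centraliser. Then the constraint $\ab(X^iY^j)=\ab((XY)^k)$-style comparisons let one express $k=ij$, by the standard trick used in \cite{CiobanuGarreta} for hyperbolic groups. Concretely, one reuses the construction showing undecidability for hyperbolic groups with abelianisation of rank $>2$. That construction really only needs two definable cyclic subgroups with independent abelian images, plus enough freedom to form products, and a non-abelian free factor supplies this.

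The main obstacle is step 3 when one of the factors is cyclic, e.g.\ $n=2$, $m=1$. In that case the elements must be chosen carefully so that their centralisers stay cyclic and are not absorbed into the amalgam. I would first try taking $x=a_1$, $y=a_2$ (or suitable conjugates/products avoiding conjugates of powers of $U$), using malnormality-type properties of $\langle U\rangle$ after passing to a root. If $U$ is a proper power, I would replace it by its root. Addition follows from multiplication of cyclic powers, so once multiplication is encoded, Hilbert's tenth problem reduces to $\DPAB{G}$.
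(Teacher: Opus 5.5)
Your two-generator case is correct and is essentially the paper's Case~1: $\Z^2/\langle (p,-q)\rangle$ always has torsion-free rank $1$, so \cref{orgwc-decidable-cor} applies (and if $|p|=1$ or $|q|=1$ then $G\cong\Z$, which is trivial).

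The gap is in the case $n+m\geq 3$, where you give a plan and its central step does not work as stated. Exponents in a group equation are fixed integers, so a comparison such as $\ab(X^iY^j)=\ab((XY)^k)$ only produces linear relations; with $X=x$, $Y=y$ it just forces $i=j=k$. It can never express $k=ij$. The missing idea is to put an unknown \emph{inside the element whose centraliser you take}. In \cref{cyc-pinched-not-power-und} one takes $X\in C_G(a_2)=\langle a_2\rangle$, imposes $[Y,a_1X]=1$, and uses $C_G(a_1a_2^k)=\langle a_1a_2^k\rangle$ to get $Y=(a_1a_2^k)^\ell$. Then $\ab(Y)=\ell\,\ab(a_1)+k\ell\,\ab(a_2)$, and the constraint $\ab(Y)=\ab(UV)$ with $U\in C_G(a_1)=\langle a_1\rangle$ and $V\in C_G(a_2)$ forces $U=a_1^\ell$ and $V=a_2^{k\ell}$, provided $\ab(a_1),\ab(a_2)$ are independent. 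So your `enough freedom to form products' really means controlling the centralisers of an infinite family $a_1a_2^k$, not of one product $xy$. \cref{cyclically-pinched-cents} gives this control only away from the (at most one) $k_0$ with $u$ conjugate to a power of $a_1a_2^{k_0}$; there the centraliser may contain a proper root coming from $v$. The paper sidesteps $k_0$ by restricting $k$ to one parity class ($X=T^2a_2^p$ with $T\in C_G(a_2)$), so it only obtains products $(2i+p)j$. It then recovers general multiplication from squaring via $2xy=(x+y)^2-x^2-y^2$. Your sketch has no substitute for this.

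Several supporting claims also need repair.
\begin{itemize}
\item Elements of a free factor fix a vertex of the Bass--Serre tree, so they are elliptic, not hyperbolic. The correct reason that $C_G(g)$ is cyclic for $g\in F(a_1,\dots,a_n)$ not conjugate into $\langle u\rangle$ is that $g$ then fixes a unique vertex, so $C_G(g)$ lies in that vertex group (the paper quotes \cite{BarkThesis}*{Proposition 2.3}). For genuinely hyperbolic elements your conclusion is false in general: if $u=\bar u^p$ and $v=\bar v^q$ with $|p|,|q|\geq 2$, then $\bar u\bar v$ is hyperbolic and commutes with $u$.
\item This proper-power case is exactly when $G$ fails to be hyperbolic, so the construction of \cite{CiobanuGarreta} cannot simply be reused there. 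When at least one of $u,v$ is not a proper power, $G$ is hyperbolic by \cite{BestvinaFeighn92} and its abelianisation has torsion-free rank at least $n+m-1\geq 2$. Then \cite{CiobanuGarreta}*{Theorem B} settles that subcase outright, which is a legitimate shortcut; the remaining subcase still needs a direct argument.
\item You need a normalisation guaranteeing that suitable elements exist in every configuration. Already $x=a_1$ must be adjusted for $\langle a_1,a_2,b\mid a_1=b^q\rangle$ with $|q|\geq 2$, where $[X,a_1]=1$ defines $\langle b\rangle$ rather than $\langle a_1\rangle$. The paper makes $u$ cyclically reduced by a Tietze move and relabels generators when $u$ is a power of a single generator. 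In the residual case it rewrites $\langle a_1,a_2,b_1\mid a_1^p=b_1^q\rangle$ as $\langle a_1,a_2,b_1\mid (a_1a_2)^p=b_1^q\rangle$.
\item Keep the independence check from your step~2: it is what lets one read off $k\ell$. It is not automatic for a given factor: for $u=a_1^2a_2^3$ and $v=[b_1,b_2]$ one has $2\,\ab(a_1)+3\,\ab(a_2)=0$, and one must work in the other factor instead.
\end{itemize}
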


%%%%%%%%%%%%%%%%%%%%%%%%%%%%%%
                                                                                                %%%

% Expanding on this a bit...
In \cite{CiobanuGarreta} it is shown that for a hyperbolic group $G$ if the abelianisation of $G$ has torsion-free rank $\geq 2$  $\DPAB{G}$ is undecidable, while if the abelianisation of $G$ is finite then $\DPAB{G}$ is decidable. This motivates the study of $\DPAB{G}$ for hyperbolic groups (and finitely generated groups more generally) with torsion-free rank 1. As explained in \cite{CiobanuGarreta} the decidability of  \(\DPAB{G}\)  when $G$ has decidable Diophantine Problem and its abelianisation has torsion-free rank 1 remains an open problem. Motivated by this in  Problem 4 in \cite{CiobanuGarreta} it is asked whether 
the braid group on three strands  $\langle a,b \mid aba= bab \rangle$ (or more generally, odd dihedral Artin groups) have decidable $\DPAB{G}$. Observing that every odd dihedral Artin groups are isomorphic to a torus knot group of the form $\langle x,y \mid x^2 = y^m \rangle$ (see for example \cite{Crowe25}*{Lemma 2.8}) we may apply the results in this paper to give a positive answer to that question: 
   \begin{cor} Let $T_{p,q}=\langle a,b \mid a^p=b^q \rangle$ with $p,q \geq 2$  Then \(\DPAB{G}\) is decidable. In particular \(\DPAB{G}\) is decidable in every torus-knot group and $\DPAB{G}$ is decidable in every odd dihedral Artin group. 
    \end{cor}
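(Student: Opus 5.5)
The plan is to deduce this from \cref{orgwc-decidable-cor}, or equivalently from \cref{fbyc-centre-decidable}. That requires two checks: $T_{p,q}$ is a one-relator group with non-trivial centre, and its abelianisation is virtually cyclic.

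\emph{Centre.} $T_{p,q}=\langle a,b \mid a^p=b^q\rangle$ is visibly a one-relator group. It is infinite, since it surjects onto $\Z$ via $a\mapsto q$, $b\mapsto p$. The element $a^p=b^q$ commutes with both $a$ and $b$, so it is central. It is non-trivial: $T_{p,q}$ is the amalgamated product $\langle a\rangle *_{a^p=b^q}\langle b\rangle$ of two infinite cyclic groups, and the factors embed, so $a^p$ has infinite order. (Alternatively, map onto $\Z/p * \Z/q$; the kernel is generated by this central element.)

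\emph{Abelianisation.} $T_{p,q}^{\mathrm{ab}}=\Z^2/\langle (p,-q)\rangle$. Since $(p,-q)\neq 0$, this quotient has torsion-free rank $1$. Hence it is isomorphic to $\Z\oplus \Z/\gcd(p,q)$, which is virtually cyclic.

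By \cref{orgwc-decidable-cor}, $\DPAB{T_{p,q}}$ is decidable (and EDT0L). Equivalently, we could note that $T_{p,q}$ is a tree-GBS group, its graph of groups being a single edge with two $\Z$ vertices, and apply the tree-GBS corollary.

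\emph{Torus knot groups.} A torus knot group is $T_{p,q}$ with $p,q\geq 2$ coprime, so it is covered by the above.

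\emph{Odd dihedral Artin groups.} Let $A$ be the dihedral Artin group with relation $\mathrm{prod}(a,b;m)=\mathrm{prod}(b,a;m)$ for $m$ odd. By \cite{Crowe25}*{Lemma 2.8}, $A\cong\langle x,y\mid x^2=y^m\rangle=T_{2,m}$; explicitly one can take $x=\mathrm{prod}(a,b;m)$ and $y=ab$. So $A$ is again covered, and in particular the braid group $B_3\cong T_{2,3}$ answers Problem 4 of \cite{CiobanuGarreta}.

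The only step needing care is that $\DPAB{}$ is an isomorphism invariant. This requires that the abelianisation constraints transfer under a change of finite generating set. They do: the constraints are equations in $G^{\mathrm{ab}}$, which is intrinsic, and a change of generators induces a linear substitution on the abelian side. There is no real obstacle here.
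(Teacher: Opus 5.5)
Your proposal is correct and follows the route the paper indicates. The paper verifies that $T_{p,q}$ is a one-relator group with non-trivial centre (generated by $a^p=b^q$) and virtually cyclic abelianisation $\Z\oplus\Z/\gcd(p,q)$, applies \cref{orgwc-decidable-cor}, and reduces odd dihedral Artin groups to $\langle x,y\mid x^2=y^m\rangle$ via \cite{Crowe25}*{Lemma 2.8}; your explicit checks (the amalgam argument for non-triviality of the centre, the abelianisation computation, and the generators $x=\mathrm{prod}(a,b;m)$, $y=ab$) supply details the paper leaves implicit.
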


%In fact our results give several ways of seeing that \(\DPAB{G}\) is decidable in the groups $T_{p,q}=\langle a,b \mid a^p=b^q \rangle$ since these one-relator groups are both (a) one-relator groups with non-trivial centre and also (b) cyclically pinched one-relator groups.
%{\color{purple} AL: Technically not - we obtain the positive results for cyclically pinched groups by noticing that they are also one-relator with centre (that is, precisely the torus knot case)}

The main tool used to prove undecidability results in this paper is \emph{interpretability} using systems of equations, which allows us to translate one structure into another and to reduce the Diophantine Problem from one structure to the other. The main reductions are to the Diophantine Problem in the ring of integers, which is a classical undecidable problem (Hilbert's 10th problem). 

\subsection*{Connections to computer science.} The Diophantine Problem with
abelianisation constraints in groups is inspired by work in theoretical computer
science, where the study of equations with language theoretic, length and
abelianisation constraints in free monoids has a long tradition and is a very
active field of research \cites{Durnev74, Abdulla, RichardBuchi1988, DayManeaWE,
GarretaGray}. 

Deciding whether a free monoid equation has solutions satisfying linear length constraints is a major open question, of both theoretical importance (if undecidable, it would provide a new solution to Hilbert's 10th problem for Diophantine polynomials) and practical implications, in the context of string/SMT solvers for security analysis, formal verification, the robustness of Web applications, and more \cite{Amadini}. 
  SMT solvers, which have boomed over the last 10 years, consider first-order formulae in a wide context, such as the real numbers, integers, data structures, and strings over finite alphabets. The question is then whether there is a solution for a first-order formula that is significantly more complex than what is handled by SAT-solvers: the name `satisfiability modulo theories' (SMT) expresses the fact that the solvers work `modulo' a certain theory (beyond the Boolean framework) in first-order logic with equality. In algebraic terms, this means solving equations in free monoids and related structures (as many of the constraints lead naturally to commutativity or other relations between the alphabet letters), as every string can be seen as an element over the generators of a finitely generated monoid or group.

%----------------------
% 
%- We say we prove a result that ``partially answers Problem 4 in \cite{CiobanuGarreta}''. I think it might be good to expand on this e.g. say exactly what the problem is that we answer. Quoting from that paper it says:
%
%\noindent ``There are other intriguing groups where the abelianisation has torsion-free rank=1, such as the Baumslag–Solitar groups $BS(1,n)$, or the braid group on three strands $\langle a,b | aba= bab \rangle$ (or more generally, odd dihedral Artin groups).  In all these cases, DP is known to be decidable, but (DP,ab) remains open."
%
%Another related quote from the paper that is relevant is the following: 
%
%``The decidability of  \(\DPAB{G}\)  when $G$ has decidable Diophantine Problem and its abelianisation has torsion-free rank 1 remains an open problem. This applies, for example, to hyperbolic groups with first Betti number equal to 1, or to certain classes of one-relator groups, including dihedral Artin groups and soluble Baumslag-Solitar groups (see Section 8)."
%                                                                                                %%%
%%%%%%%%%%%%%%%%%%%%%%%%%%%%%%%
%
%A consequence of both Corollary \ref{orgwc-decidable-cor} and Theorem \ref{thm:cycpinDPAB} is the following, which partially answers Problem 4 in \cite{CiobanuGarreta}. 
%

 \section*{Declaration of (no) AI usage}

 No AI tools were used for proving any results, making any conjectures or generating any examples. No AI tools were used for any aspects of the writing or editing of this paper. 
 
    \section*{Acknowledgements}
    
    The first author benefited from the support provided by the Deutsche Forschungsgemeinschaft (DFG, German Research Foundation) under Germany's Excellence Strategy -- The Berlin Mathematics Research Center MATH+ (EXC-2046/1, EXC-2046/2, project ID: 390685689).

    The research of the second and third-named authors was supported by the EPSRC Fellowship grant EP/V032003/1 ‘Algorithmic, topological and geometric aspects of infinite groups, monoids and inverse semigroups’.

\section{Preliminaries}

Let $\mathbf{x}=\{X_1, \dots, X_m\}$ be a set of variables, where $m\geq 1$.
% For any group $G$ and finite set of variables $\mathbf{x}$, an \emph{equation} with
%coefficients $g_1,\dots, g_{m+1}$ from $G$ is a formal expression
%$g_1X_1^{\epsilon_1}g_2 X_2^{\epsilon_2}\dots X_m^{\epsilon_m}g_{m+1}=1_G,$
%where $\epsilon_j = \pm 1$ for all $1 \leq j \leq m$,
%and $X_j \in \mathbf{x}.$ 
%Such an equation is called \textit{satisfiable} if there exist values for
%the $X_j$'s in $G$ with which the above identity in $G$ is satisfied;
%each such set of values for the $X_j$ is a \textit{solution}.   
%A finite set of equations with coefficients in $G$
%is a \textit{system of equations} over $G$, and is satisfiable if there are assignments to the $X_j$
%such that all of the equations in the system are satisfied.
For a group $G$, a \textit{finite system of equations} in $G$ over the variables $\mathbf{x}$ is a
		finite subset $\mathcal{E}$ of the free product $G \ast F(\mathcal{\mathbf{x}})$, where
		\(F(\mathbf{x})\) is the free group on \(\mathbf{x}\). If
		\(\mathcal{E} = \{w_1, \ \ldots, \ w_n\}\), then a \textit{solution} to the
		system \((w_1 =1) \wedge \cdots \wedge (w_n = 1)\) is a homomorphism \(\phi \colon G \ast F(\mathbf{x}) \to
		G\), such that \(\phi(w_1) = \cdots =\phi (w_n) = 1_G\) and
		 \(\phi(g) = g\) for all \(g \in G\). If $\mathcal{E}$ has a solution, then it is \emph{satisfiable}.
		 
%	\begin{example} Consider the system $\mathcal{E}=\{w_1, w_2\}\subset F(a,b) \ast F(X_1, X_2)$ over the free group $F(a,b)$, where $w_1=X_1^2(abab)^{-1}$, $w_2=X_2X_1X_2^{-1}X_1^{-1}$; we set $w_1=w_2=1$, which can be written as $X_1^2=abab, X_2X_1=X_2X_1$. The solutions are $\phi(X_1)=ab, \phi(X_2)=(ab)^k$, $k \in \mbb{Z}$.
	%\end{example}

For a group $G$, we say that systems of equations over $G$
are \emph{decidable} over $G$ if there is an algorithm to determine whether any
given system is satisfiable.  The question of decidability of systems of equations in \(G\) is called the
{\em Diophantine Problem} for $G$, and denoted $\DP{G}$.

\subsection{The Diophantine Problem with constraints}\label{DP:constraints}

We can augment the Diophantine Problem for a group by requiring that the solutions belong to certain sets, which we call \emph{constraints} (see \cite{CiobanuGarreta} and \cite{CEL25} for more background). We start by defining formal language constraints. 

Let $\Sigma$ be a finite alphabet and let $S=\Sigma^{\pm 1}$. Suppose $G$ is a group generated by $S$, and let $\pi \colon  S^\ast \rightarrow G$ be the natural projection from the free monoid $S^*$ generated by $S$ to $G$, taking a word over the generators to the element it represents in the group. A language is \emph{regular} if it is recognised by a finite state automaton, as is standard.
%When $w$ is a word over $S$, and we write $|w|$ to denote the length of $w$.

\begin{dfn}\label{def:rat_reg_con}
\ \begin{enumerate}
\item[(1)] 
A subset $L$ of $G$ is {\em recognisable} if the full preimage
$\pi^{-1}(L)$ is a regular subset of $S^*$.
\item[(2)] A subset $L$ of $G$ is {\em rational} if $L$ is the
image $\pi(R)$  of a regular subset $R$ of $S^*$.
%\item[(3)] A regular subset $L'$ of $S^*$ is
%\emph{quasi-isometrically embedded} (q.i. embedded) in $G$ if there exist
%$\lambda \geq 1$ and $\mu \geq 0$ such that, for any $w \in L'$,
%$|\pi(w)|_G \geq \frac{1}{\lambda}|w|-\mu.$
%\item[(4)] A rational subset $L$ of $G$ is \emph{quasi-isometrically
%embeddable} (q.i. embeddable) in $G$ if there exists a quasi-isometrically embedded
%regular subset  $L'$ of $S^*$ such that $\pi(L')=L.$
\end{enumerate}
\end{dfn}
It follows immediately from the definition that recognisable subsets of $G$ are
rational.

Let $\mathcal E$ be a system of equations on variables $\mathbf{x}=\{X_1, \dots,
X_k\}$ in a group $G$. The \emph{Diophantine Problem with rational (resp.
recognisable) constraints} asks about the existence of
solutions to $\mathcal E$, with some of the variables restricted to taking
values in finitely many specified rational (resp. recognisable) sets. We write \(\DPRat{G}\) and \(\DPRec{G}\) for the two cases. 

While there are many non-rational constraints of interest (see \cite{CioDLT24}),
we focus here on abelianisation constraints.  For any group $G$, let $\ab\colon G\to
G^{\ab}$ be the natural abelianisation map to $G^{\ab}= G/G'$, that is, the
quotient of $G$ by its commutator subgroup. We write $\mc{DP}(G, \ab)$ for the
$\mc{DP}$ where an abelian predicate is added, or equivalently,
\emph{abelianisation constraints} are imposed. A set of abelianisation
constraints is a system $\Theta$ of equations in the group $G^{\ab}$, and
$\mc{DP}(G, \ab)$ asks whether a solution to $\mathcal E$ exists such that the
abelianisation of the solution satisfies the system $\Theta$ in $G^{\ab}$. 
%We
%will also consider, more generally, the \emph{Diophantine Problem with
%\(E\)-constraints} for some fixed subset \(E \subseteq G\), which allows
%restricting variables to the set \(E\). We denote this by
%\(\DPC{G}{E}\), and note that \(\DPAB{G}\) and \(\DPC{G}{[G, G]}\) are the same. 

\begin{ex}

Let $F(A)$ be the free group of rank $2$ with basis $A=\{\alpha, \beta\}$ and abelianisation $\ab \colon F(A) \mapsto \Z^2$. For the set of variables $\{X, Y, Z\}$ consider the equation 
%\begin{equation}\label{eqn}
$$X^2\alpha Y=YZ^{-2}\alpha.$$
%\end{equation}

An instance of $\mc{DP}(G, \ab)$ would be to ask if there is a solution $\sigma$ such that $$\ab(\sigma(X)) + \ab(\sigma(Z))=(0,0).$$
The answer is `yes', as the triple $(\alpha\beta, \alpha, \alpha^{-1}\beta^{-1})$ shows: $\sigma_0(X)=\alpha \beta$, $\sigma_0(Y)=\alpha$ and $\sigma_0(Z)=(\beta\alpha)^{-1}$ satisfies $\sigma_0(X^2\alpha Y)=\sigma_0(YZ^{-2}\alpha)$.
Moreover, $\sigma_0$ satisfies $\ab(\sigma_0(X))=-\ab(\sigma_0(Z))=(1,1)$, so the abelianisation constraint holds.
\end{ex}

A key notion associated to a system of equations is that of a $\DP{G}$-definable set.

\begin{dfn}
%{\color{blue} AL: We never actually use the finite unions bit - just definable
%with systems on the nose. Worth updating?}

\label{d: e-def} 
Let $G$ be a group.
A set $D \subset G\times \overset{m}{\ldots} \times G = G^m$ is called
%\emph{definable by positive existential formulas} in $G$, or
$\DP{G}$-\emph{definable}, if there exists a system
$\mc{E}_{D}$ of equations with variables $(X_1, \dots, X_m,
Y_1, \dots, Y_k)=(\mathbf{x}, \mathbf{y})$, and constants from $G$, satisfying
the following: a tuple $\mathbf{a}\in G^m$ is in $D$ if and only if  there
exists \(\mathbf{b} \in G^k\) such that \((\mathbf{a}, \mathbf{b})\) is
a solution to the system $\mc{E}_{D}$.
  In this case $\mc{E}_{D}$ is said to \emph{define} $D$ in $G$.
\end{dfn}

\begin{ex}\label{ex:central}

\ \begin{itemize}
  
\item[(1)] The most frequent type of $\DP{G}$-definable set in a group $G$ encountered in this paper will be the \emph{centraliser} of an element $g$, that is, the set $C_G(g) = \{h\in G\mid gh=hg\}$. An element $x\in G$ belongs to $C_G(g)$ if and only if $x$ is a solution to the equation $Xg=gX$.

\item[(2)] Another classical example is the \emph{centre} $Z(G)=\{g\in G\mid gh=hg \ \forall h\in G\}$ of a group $G$ generated by a finite set $\{g_1, \dots, g_n\}$. Indeed, an element $x \in G$ lies in  $Z(G)$ if and only if $x$ satisfies the system of equations $([X,g_1]=1) \wedge \cdots \wedge ([X,g_n]=1)$.
\end{itemize}
\end{ex}

Using $\DP{G}$-definability one can interpret one algebraic structure into
another in the standard model-theoretic way. We do not formalise this here, but
refer the reader to \cite{CiobanuGarreta}*{Section 2.3} for the precise
definitions.  A common thread throughout this paper is to use interpretability
(see \cite{CiobanuGarreta}) to encode addition and multiplication in $\Z$ into
the $\mc{DP}(G, \ab)$ for some group $G$, and by the undecidability of Hilbert's
10th Problem, get the same result for $G$.

%some of which are restricted to lying in specified rational or recognisable sets, respectively.
% We write $\mc{DP}(G, \ab)$ for the $\mc{DP}$ where an abelian predicate is added, or equivalently, \emph{abelianisation constraints} are imposed. A set of abelianisation constraints is a system $\Theta$ of equations in the group $G^{\ab}$, and $\mc{DP}(G, \ab)$ asks whether a solution to $\mathcal E$ exists such that the abelianisation of the solution satisfies the system $\Theta$ in $G^{\ab}$.
%
%That is, we ask whether there is a solution $\phi$ of $\mathcal E$ satisfying a systems of equations in the abelianisation of $\Sigma^*$ or $F(\Sigma)$, such as $a_1 \ab(\phi(X_1))+ \dots + a_k \ab(\phi(X_k))=c.$

\subsection{Existential theory} For various combination results, we will use the existential theory
in a group \(G\). The
existential theory is a generalisation of the Diophantine Problem, to allow all
first-order sentences built from existential quantifiers, the constants in \(G\)
together with the logical operators $\wedge$ (`and'), $\vee$ (`or') and $\lnot$ (`not'). De Morgan's laws allow one
to factor all of the or operators to the front of the sentence, so all such
sentences are equivalent to finite disjunctions of systems of equations and
inequations in our group (where by an inequation, we mean an expression of the
form \(w \neq 1\) where \(w \in F(\mathbf{x}) \ast G\)).

\subsection{EDT0L solutions} 
A fruitful line of research over the last ten years has been to describe the
solutions to systems of equations in groups in terms of formal languages. The
class of EDT0L languages has proved particularly suitable, as it describes
solutions in many important classes of groups, including hyperbolic groups
\cite{CE21}, virtually direct products of hyperbolic groups \cite{Levine23} and
sinlge equations in one variable in the Heisenberg group \cite{LevineHeis}. We
do not need the definition of this class of languages here as we rely on
combination results from known characterisations, and refer the reader to
\cite{CL25} for definitions and an overview of the area.

When passing to direct products, the decidability of the Diophantine Problem
with rational constraints is not closed, with \(F_2 \times \Z\) being a
counterexample \cite{BobAlexinprep} and \(F_2 \times F_2\) even having an
undecidable subgroup membership problem \cite{Mihaelovagenwp}. If the rational
constraints are forced to be fully contained in one of the factors, then one
can obtain positive results in this direction.

 \begin{proposition}
    \label{dir-prod-rat-constraints}
        Let \(G_1\) and \(G_2\) be groups. Suppose that \(\DPRat{G_1}\) and \(\DP{G_2}\) are decidable
        (EDT0L).
        Then the Diophantine Problem
        with rational constraints fully contained in \(G_1 \times \{1\}\) is
        decidable (EDT0L) in \(G_1 \times G_2\).
    \end{proposition}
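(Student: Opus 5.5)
The plan is to split every system over \(G_1 \times G_2\) coordinatewise into a system over \(G_1\) and a system over \(G_2\). Given a system \(\mathcal{E}\) in variables \(X_1, \dots, X_m\), with coefficients \((g_i, h_i) \in G_1 \times G_2\), introduce for each \(X_j\) two new variables \(Y_j\) (ranging over \(G_1\)) and \(Z_j\) (ranging over \(G_2\)), intended to be the two coordinates of \(X_j\). Replacing each coefficient \((g,h)\) by \(g\) and each \(X_j\) by \(Y_j\) in every word \(w \in \mathcal{E}\) gives a word \(w^{(1)} \in G_1 \ast F(Y_1,\dots,Y_m)\). Doing the same with \(h\) and \(Z_j\) gives \(w^{(2)} \in G_2 \ast F(Z_1,\dots,Z_m)\). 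Since multiplication in \(G_1\times G_2\) is componentwise, the assignment \(X_j \mapsto (y_j, z_j)\) solves \(\mathcal{E}\) if and only if \((y_j)\) solves \(\mathcal{E}^{(1)} = \{w^{(1)}\}\) in \(G_1\) and \((z_j)\) solves \(\mathcal{E}^{(2)} = \{w^{(2)}\}\) in \(G_2\). This is a direct check on the homomorphism \(G_1 \times G_2 \ast F(\mathbf{x}) \to G_1\times G_2\) composed with the two projections.

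Next I will handle the constraints. A constraint says \(X_j \in L\), where \(L\) is a rational subset of \(G_1\times G_2\) contained in \(G_1 \times \{1\}\). Identify \(L\) with its projection \(L_1 \subseteq G_1\). This set is rational in \(G_1\): take a regular language \(R\) over the generators of \(G_1 \times G_2\) with \(\pi(R)=L\), and delete the \(G_2\)-generators from each word. This deletion is a monoid homomorphism, so it sends \(R\) to a regular language \(R_1\). Moreover \(\pi_{G_1}(R_1)\) is the first-coordinate projection of \(L\), because \(G_1\) and \(G_2\) commute in the product. If the chosen generating set is not a union of generators of the factors, first change it; rationality does not depend on the finite generating set, and one can also use a rational transduction. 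The constraint \(X_j \in L\) then becomes the pair of conditions \(Y_j \in L_1\) and \(Z_j = 1\). The condition \(Z_j=1\) is just an extra equation in \(\mathcal{E}^{(2)}\).

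The algorithm follows directly. Decide \(\mathcal{E}^{(1)}\) together with the constraints \(Y_j \in L_1\) using decidability of \(\DPRat{G_1}\). Decide \(\mathcal{E}^{(2)} \cup \{Z_j = 1\}\) using \(\DP{G_2}\). Answer yes exactly when both are satisfiable. The two subsystems share no variables, so the joint solution set is the product of the two solution sets, and this is why the answer is correct.

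For the EDT0L claim, the solution language is built from the two factor solution languages. With the generating set \(S_1 \cup S_2\) and solutions written as tuples of words, a solution for \(X_j\) can be represented by the concatenation \(u_j v_j\), where \(u_j\) and \(v_j\) are the \(j\)-th components in the \(G_1\)- and \(G_2\)-languages. So the key step is to show that EDT0L languages are closed under this componentwise shuffle/concatenation of tuples over disjoint alphabets, separated by the tuple delimiter \(\#\). I expect this to be the main obstacle, since it depends on the exact convention for EDT0L solution languages. I would first try to cite the standard closure of EDT0L under such products from \cite{CL25} or \cite{Levine23}. Failing that, I would construct the grammar directly: run a grammar for the \(G_1\)-language, then one for the \(G_2\)-language on disjoint nonterminals, starting from a seed with slots \(\perp_j \perp'_j\). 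The rest is routine bookkeeping.
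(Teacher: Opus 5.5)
Your proposal is correct and follows essentially the paper's proof. You split the system coordinatewise, push the rational constraints into the $G_1$-system, decide the two variable-disjoint systems with $\DPRat{G_1}$ and $\DP{G_2}$, and for the EDT0L claim rely on the componentwise-concatenation closure underlying Proposition 4.10 of \cite{Levine23}, which is exactly what the paper cites. Your explicit equations $Z_j = 1$ are in fact a more accurate treatment of constraints contained in $G_1 \times \{1\}$ than the paper's remark that $\mathcal{E}_2$ is left unchanged.
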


    \begin{proof}
        Let \(\mathcal{E}\) be a system of equations in \(G_1 \times G_2\),
        with set of solutions \(\mathcal S\). Let \(\mathcal E_1\) and
        \(\mathcal E_2\) be the projections of \(\mathcal{E}\) onto \(G_1\)
        and \(G_2\), respectively, with sets of solutions \(\mathcal S_1\)
        and \(\mathcal S_2\), respectively. Then
        \[
            \mathcal S = \{(x, y) \mid x \in \mathcal S_1, y \in \mathcal{S}_2\}.
        \]
        So if we add rational constraints to \(\mathcal E\) that are fully
        contained in \(G_1\), under the projections these just add rational
        constraints to \(\mathcal E_1\) and leave \(\mathcal E_2\) unchanged,
        and the above equality still holds. In particular, \(\mathcal S\)
        is empty if and only if \(\mathcal{S}_1 = \varnothing\) or
        \(\mathcal S_2 = \varnothing\), which we can decide using
        \(\DPRat{G_1}\) and \(\DP{G_2}\), respectively.

        For the claim about EDT0L solutions, we can apply
        \cite{Levine23}*{Proposition 4.10}. This does not cover the rational
        constraints, but it does show that if \(\mathcal S_1\) and \(\mathcal
        S_2\) correspond to EDT0L solution languages, then so does \(\mathcal
        S\), with respect to the natural inherited normal form of direct
        products. Thus even with the addition of rational constraints to
        \(\mathcal S_1\), the proof of \cite{Levine23}*{Proposition 4.10} will
        still imply that \(\mathcal S\) corresponds to an EDT0L solution
        language.
%
%        {\color{red} If the above paragraph is not okay, we can just copy
%        and paste the proof of \cite{Levine23}*{Proposition 4.10} and
%        add `rational constraints' to one of the solution languages. This
%        is about half a page, as it relies heavily on Lemma 3.5 in the
%        same paper.}
    \end{proof}

\subsection{Same groups, different looks}\label{seq:equivalences}
The focus of this paper are free-by-cyclic groups, so semidirect products of the form $F_n \rtimes_\phi \Z$, where $\phi$ is an automorphism of $F_n$, with $n\geq 1$.

In particular, we consider free-by-cyclic groups with non-trivial centre. Baumslag and Taylor \cite{BaumslagTaylor68} proved that each one-relator
group with non-trivial centre is $\{$ f.g. free $\}$-by-cyclic $F_n
\rtimes_\phi \Z$ for some $n\geq 1$. They also proved that there is an algorithm that takes any
one-relator group as input and decides whether that group has a non-trivial
centre. 

Another relevant class of groups are the   
Generalised Baumslag-Solitar (GBS) groups. These are the fundamental groups of
finite graphs of infinite cyclic groups \cite{Lev15}. We assume familiarity with
Bass-Serre theory and define each such group based on a graph of groups $\Gamma
(V,E)$, where $\Gamma$ is a connected graph with vertex set $V=V(\Gamma)$ and
edge set $E=E(\Gamma)$, and an infinite cyclic group is attached to every vertex
and edge. To every edge $e \in E$ we associate a start-vertex $s(e)$ and
terminus $t(e)$, an involution $e\mapsto \overline{e}$, and local groups
$G_{e}=G_{\bar{e}}$ and $G_{s(e)}$, $G_{t(e)}$ (all isomorphic to $\Z$).
Furthermore, there are inclusion maps $\iota_{e,1} \colon G_e \mapsto G_{s(e)}$ and
$\iota_{e,2} \colon G_e \mapsto G_{t(e)}$ given by multiplication by some integers
$p_e$ and $q_e$. 

It turns out that GBS groups with non-trivial centre are free-by-cyclic:

\begin{proposition}[{\cite{Lev15}*{Proposition 4.1}}]
For any group $G$ the following are equivalent: 
\begin{enumerate}
\item $G$ is a GBS group with a non-trivial centre;   
\item there exists $n\geq 1$ and $\phi \in \mathrm{Out}(F_n)$ of finite order such that $G = F_n \rtimes_\phi \Z$.    
\end{enumerate}
\end{proposition}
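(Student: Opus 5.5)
We prove the two implications separately: for (1)\(\Rightarrow\)(2) we use the action on the Bass--Serre tree, and for (2)\(\Rightarrow\)(1) we realise the outer automorphism on a graph.

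\textbf{(1)\(\Rightarrow\)(2).} Let \(G\) be a GBS group with Bass--Serre tree \(T\), which we take to be minimal. Let \(1 \neq z \in Z(G)\).
\begin{enumerate}
\item First we show that \(z\) acts trivially on \(T\). Suppose \(z\) were hyperbolic. Its axis would then be \(G\)-invariant, since \(gzg^{-1}=z\), and by minimality \(T\) would be a line. In that case \(G\) is elementary, namely \(\Z\), \(\Z^2\) or the Klein bottle group, and these are handled by hand: \(\Z = F_1\rtimes \Z\) with trivial \(\phi\), \(\Z^2 = \Z \rtimes_{\mathrm{id}} \Z\), and the Klein bottle group has trivial centre's complement \(\langle t^2\rangle\) central with \(\Z\rtimes_{-1}\Z\). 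So assume \(z\) is elliptic. Then \(\mathrm{Fix}(z)\) is a nonempty \(G\)-invariant subtree, so it equals \(T\) by minimality. Hence \(z\) lies in every vertex and edge group.
\item Consequently \(\bar G = G/\langle z\rangle\) acts on \(T\) with finite (cyclic) vertex stabilisers \(G_v/\langle z\rangle\). Thus \(\bar G\) is the fundamental group of a finite graph of finite groups, and so it is virtually free.
\item Choose a free subgroup of finite index in \(\bar G\), and let \(H\) be its full preimage in \(G\). The extension \(1\to\langle z\rangle\to H\to F\to 1\) is central with free quotient, so it splits and \(H\cong F\times\langle z\rangle\).
\item Compose the transfer \(G\to H^{\ab}\) with the projection onto the \(\langle z\rangle\)-factor. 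Since \(z\) is central, the transfer sends \(z\) to \(z^{[G:H]}\). This gives a homomorphism \(f\colon G\to\Z\) with \(f(z)\neq 0\).
\item Set \(K=\ker f\).
\begin{itemize}
\item Since \(f(z)\neq 0\), we have \(K\cap\langle z\rangle=1\). So \(K\) embeds in \(\bar G\), with image of finite index because \(K\langle z\rangle\) has finite index in \(G\).
\item It follows that \(K\) is finitely generated, torsion-free (as \(G\) is torsion-free) and virtually free. By Stallings' theorem \(K\) is free, say \(K\cong F_n\).
\item Hence \(G=K\rtimes\langle t\rangle\), where \(t\) is any element with \(f(t)\) generating \(f(G)\).
\end{itemize}
\item Finally, write \(z=t^m k_0\) with \(k_0\in K\) and \(m\neq 0\). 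Conjugation by \(z\) is trivial, so conjugation by \(t^m\) on \(K\) equals conjugation by \(k_0^{-1}\). Thus \(\phi^m\) is inner, and \(\phi\) has finite order in \(\Out(F_n)\).
\end{enumerate}

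\textbf{(2)\(\Rightarrow\)(1).} Suppose \(\phi^m=\mathrm{ad}(k)\) for some \(k\in F_n\) and \(m\geq 1\).
\begin{enumerate}
\item The element \(k^{-1}t^m\) centralises \(F_n\) and commutes with \(t\). It is nontrivial because its image in \(\Z\) is \(m\neq 0\). Hence \(G\) has nontrivial centre.
\item To see that \(G\) is GBS, we use Culler's realisation theorem: a finite-order element of \(\Out(F_n)\) is realised by a finite-order automorphism \(h\) of a finite graph \(X\) with \(\pi_1X=F_n\). Then \(G\cong\pi_1\) of the mapping torus \(M_h\), since changing \(\phi\) within its outer class does not change \(G\).
\item Because \(h\) is periodic, \(M_h\) fibres over the quotient graph \(X/\langle h\rangle\). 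The preimage of a vertex orbit is a disjoint union of circles, namely the mapping torus of a finite cyclic permutation. The preimage of an open edge orbit is a union of annuli, attached to these circles by covering maps of some degrees. So \(M_h\) is a graph of spaces with circle vertex and edge spaces and \(\pi_1\)-injective attaching maps.
\item Therefore \(G\) is the fundamental group of a finite graph of infinite cyclic groups, that is, a GBS group.
\end{enumerate}

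The main obstacle is step (4) of the forward direction: producing a homomorphism to \(\Z\) that is nonzero on the central element. This is what makes the fibre \(K\) meet \(\langle z\rangle\) trivially and be free. Our first attempt at it is the transfer argument through the finite-index subgroup \(H\cong F\times\Z\), as described above.
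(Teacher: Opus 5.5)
The paper gives no proof of this statement. It is quoted from Levitt and used as a black box, so there is no in-paper argument to match. Up to the small repairs below, your proposal is a correct self-contained proof.

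\textbf{Comparison.} The nearest argument in the paper is the proof of Proposition~\ref{fbyc-embed}. For tree-GBS groups it exhibits the central element $a_0^{n_0}$, identifies $G/\langle a_0^{n_0}\rangle$ as a finite graph of finite cyclic groups, and writes down a homomorphism $G\to\Z$, $a_v\mapsto d^{\ell_v}$, directly from the edge labels. Your forward direction has the same skeleton: the central element lies in every vertex group, the quotient by it is virtually free, and there is a map to $\Z$ that is non-zero on it. The difference is that you obtain the map to $\Z$ abstractly, via the transfer into $H\cong F\times\langle z\rangle$. This works for arbitrary underlying graphs, not just trees (e.g.\ loops as in $\Z^2$), at the cost of explicitness.

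You can shorten the end of step (5). Every vertex stabiliser is cyclic and contains $z$, so $f$ is injective on it, and $K$ acts freely on $T$. The quotient $T/K=T/K\langle z\rangle$ is finite, so $K$ is finitely generated free by Bass--Serre theory, with no appeal to Stallings.

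\textbf{Repairs.}

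First, ``$\Z=F_1\rtimes\Z$ with trivial $\phi$'' is false, since $F_1\rtimes_{\mathrm{id}}\Z\cong\Z^2$. In fact $\Z$ never reaches your hyperbolic branch, because its minimal tree is a point. In the elliptic branch your argument gives $K=1$, i.e.\ $n=0$. So with $n\ge 1$ the equivalence tacitly excludes $G\cong\Z$, and you should say so rather than force it.

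Second, in (2)$\Rightarrow$(1) the claim that $k^{-1}t^m$ commutes with $t$ needs $\phi(k)=k$. Commuting $\phi$ with $\phi^m=\mathrm{ad}(k)$ gives $\mathrm{ad}(\phi(k))=\mathrm{ad}(k)$, hence $\phi(k)=k$ when $n\ge 2$. When $n=1$ you must choose $k=1$, and otherwise the claim fails. For example, in $\langle a,t\mid tat^{-1}=a^{-1}\rangle$ one has $\phi^2=\mathrm{id}=\mathrm{ad}(a)$, yet $t(a^{-1}t^2)t^{-1}=at^2\neq a^{-1}t^2$.

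Third, in the mapping-torus step, first subdivide $X$ so that $h$ acts without inversions. The preimage of each vertex of $X/\langle h\rangle$ is then a single circle (one $h$-orbit is one cycle), not a disjoint union of circles. Each edge annulus attaches by coverings of degree $r_e/r_v$, where $r$ denotes orbit size, and these degrees are the GBS labels.

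Finally, take $T$ to be the minimal $G$-invariant subtree, which is a point when $G\cong\Z$. This guarantees the minimality you invoke in step (1).
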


In fact, it follows from \cite{Lev15} that the classes of groups (ii), (iii) amd (iv) listed in the Introduction are all equal to each other. %The equivalence of the second and third points above is discussed for instance in the introduction to the paper of Bridson and Piwek \cite{BridsonPiwek}.  
\begin{itemize}

\item[(ii)] generalised Baumslag-Solitar (GBS) groups with non-trivial centre (GBS);   
\item[(iii)] [f.g. free]-by-cyclic groups with non-trivial centre (FbC);    

\item[(iv)] free-by-cyclic groups of the form $F \rtimes_\phi \Z$ for some finitely generated free group $F$ and some automorphism $\phi \in \Aut(F)$ that has finite order in $\Out(F)$ (Fb$\Z$f);
%\item GBS groups with non-trivial centre;   
%\item $\{$ f.g. free $\}$-by-cyclic groups with non-trivial centre;    
%\item free-by-cyclic groups where $F \rtimes_\phi \Z$ for some finitely generated free group $F$ and some automorphism $\phi \in \Aut(F)$ that has finite order in $\Out(F)$. 
\end{itemize}

Thus if $G$ is a GBS group with a non-trivial centre then $G$ is virtually $F_r \times \Z$, $r\geq 1,$ by Lemma \ref{lem:finorder}, which is a standard result. Therefore free-by-cyclic groups with acting autmorphism $\phi$ of finite order in $Out(-)$ (Fb$\Z$f) are virtually direct products, where the product has the form $F \times \mathbb{Z}$ (vF$\Z$).

%{\color{red} AL: Is the following not citable?}
\begin{lem}\label{lem:finorder}
Let $G$ be any group and let $\phi$ be an automorphism of finite order in $Out(G)$. The group $G \rtimes_{\phi}\mathbb{Z}$ contains $G \times \mathbb{Z}$ with finite index.
\end{lem}

%\begin{proof}
%Suppose $\phi^k=\tau_h$ for some integer $k\geq 1$ and an inner automorphism $\tau_h$. Then write $G \rtimes_{\phi}\mathbb{Z}=\langle G, t \mid t a_it^{-1}=\phi(a_i) \rangle$, where $a_i$ are the generators of $G$.
 
 %Consider the subgroup $H=\langle a_1, \dots, a_n, t^k \rangle$ of $G$.  From $t a_it^{-1}=\phi(a_i)$ we get that $t^k a_i t^{-k}=\phi(a_i)^k=ha_i h^{-1}$, so $(h^{-1} t^k)a_i (h^{-1} t^k)^{-1}=a_i$. If we write $r=h^{-1} t^k$ then $H=\langle G, r \mid r a_ir^{-1}=a_i \rangle \cong G \times \mathbb{Z}$. Since each $t^{i}, 1\leq i \leq k$ gives a coset representative of $H$ in $G$, $H$ has finite index in $G \rtimes_{\phi}\mathbb{Z}$.
 %\end{proof}

%\lc{give an example of a virtually $F \times \Z$ group that is not free-by-cyclic with centre.}

\begin{ex}
The classes of groups (ii)-(iii)-(iv) of free-by-cyclic groups with centre (FbC,
equivalently GBS or Fb$\Z$f) do not coincide with the class (v) of virtually $F
\times \Z$ for some finitely generated free group $F$, as free-by-cyclic groups
are torsion-free, whilst virtually \(F \times \Z\) groups need not be.
%he following example
%shows. The group $F \times \Z \times C_2$, where $C_2$ is the cyclic group with
%two elements, is clearly virtually $F \times \Z$, but is not free-by-cyclic,
%since free-by-cyclic groups are torsion-free.
%The group $$\langle a, b, t \mid tat^{-1}=b, tbt^{-1}=a\rangle$$
%is a free-by-cyclic group $F \rtimes_{\phi}\mathbb{Z}$ with acting automorphism $\phi$ swapping $a$ and $b$. Since the automorphism has finite order, by Lemma \ref{lem:finorder} this group is virtually $F \times \mathbb{Z}$. This group does have centre, so doesn't work. 

\end{ex}

The main protagonists of the paper have yet another characterisation. They are central extensions of hyperbolic groups (even stronger, free-by-cyclic groups with centre are central extensions of virtually free groups), and so by work of Liang have decidable Diophantine Problem \cite{Liang}.
%\begin{proposition}
 %   Free-by-cyclic groups with centre are central extensions of virtually free groups.
%\end{proposition}

%\lc{mention the Bridson-Piwek reference for this as well, or maybe instead of proof}

%\begin{proof}
  %  Let $G=F \rtimes \Z$ be (f.g.free)-by-cyclic. The centre $Z(G)$ has trivial intersection with the free normal subgroup $F$ (except in the case of the free abelian group of rank 2). So the quotient $G/{Z(G)}$ is free-by-finite.
%\end{proof}
\subsection{Computable translations between (i) -- (v)}

We have from \cite{Levitt07} that [f.g. free]-by-cyclic groups with centre
are in fact GBS with centre, which admit a presentation that we wish to work with. However,
when translating between these different isomorphic presentations, we require the process
to be effective, so that our algorithm can perform them. The proof is not difficult; we simply
iterate through all possible Tietze transformations.

\begin{lem}\label{lem:helpful2}
There is an algorithm that takes as input any [f.g. free]-by-cyclic presentation for a group $G$ with non-trivial centre and a virtually cyclic abelianisation and outputs a GBS presentation defining $G$ of the form 
        \[
            \langle \{a_v\mid v \in V(T)\} \mid \{a_{s(e)}^{p_e} = a_{t(e)}^{q_e}
            \mid e \in E(T)\} \rangle,
        \]
        for some finite rooted tree \(T\) and  some \(p_e, q_e \in \Z_{> 0}\) for all \(e \in E(T)\).
\end{lem}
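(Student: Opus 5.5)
The plan is to combine a purely existential structural fact with a blind enumeration, as the paragraph before the statement suggests. First we establish that such a presentation \emph{exists} for every $G$ satisfying the hypotheses. Then we observe that Tietze transformations let us search for it effectively.

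\textbf{Step 1 (existence).} By \cite{Lev15} (and \cite{Levitt07}), $G$ is a GBS group with non-trivial centre. So $G = \pi_1(\Gamma)$ for some finite graph of infinite cyclic groups $\Gamma$. The abelianisation of a GBS group has torsion-free rank at least $b_1(\Gamma)$, the first Betti number of the underlying graph: collapsing all vertex groups gives a surjection $G \to \pi_1(\Gamma) \to \Z^{b_1(\Gamma)}$. Moreover, a GBS group with non-trivial centre is not virtually free, since it is virtually $F \times \Z$ by Lemma \ref{lem:finorder}. Hence some vertex group has infinite image in $G^{\ab}$, because in the centred case vertex groups are commensurable and their elements have nonzero image in the rational modular homomorphism direction. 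This contributes at least one to the rank. Consequently, virtually cyclic abelianisation forces $b_1(\Gamma)=0$, i.e.\ $\Gamma$ is a tree $T$.

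For a tree, no stable letters appear, and the standard presentation is $\langle a_v \mid a_{s(e)}^{p_e} = a_{t(e)}^{q_e}\rangle$. We may take $p_e, q_e > 0$ by replacing generators $a_v$ with $a_v^{-1}$, propagating outward from a chosen root. Here I expect the rank argument to be the delicate point, and I would cite Levitt's description of which GBS groups have centre to make it clean. In the unimodular (centred) case the modular homomorphism is trivial, so a vertex generator $a_v$ has a power equal to a central element $z$. Any map $G\to\Q$ sending each vertex generator $a_v$ to $1/n_v$, where $a_v^{n_v}=z$, is well defined on the tree relations. Thus $G^{\ab}$ has rank at least $b_1+1$, which forces $b_1=0$.

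\textbf{Step 2 (algorithm).} The input is a finite presentation of the form $\langle F, t \mid t x_i t^{-1} = \phi(x_i)\rangle$. Two finite presentations define the same group if and only if one is obtained from the other by a finite sequence of Tietze transformations. The algorithm enumerates all finite sequences of Tietze transformations applied to the input presentation, in order of length. At each step it checks syntactically whether the current presentation has the displayed form: generators indexed by the vertices of a finite rooted tree, one relation per edge of the shape $a_{s(e)}^{p_e}=a_{t(e)}^{q_e}$, and positive exponents. The tree condition on the generator and relation indices is a finite combinatorial check. By Step 1, some such presentation exists and is reachable, so the search terminates. Its output defines $G$, since Tietze moves preserve the group.

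\textbf{Main obstacle.} The obstacle is Step 1, specifically justifying that the underlying graph is a tree using only the hypothesis on the abelianisation. Step 2 is routine, although it produces no complexity bound; none is needed.
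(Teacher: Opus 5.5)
Your Step 2 is exactly the paper's argument, and your overall structure (existence of a tree presentation, then a blind Tietze search) is the same as the paper's. The difference is that the paper gets existence by citing \cite{Levitt07} (Theorem 1.1(1)), whereas you try to derive it yourself. That derivation fails in one case.

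Your rank argument needs a non-trivial central $z$ that is a non-zero power of every vertex generator $a_v$. This does not follow from triviality of the modular homomorphism. It holds for \emph{non-elementary} GBS groups for a different reason: a central element cannot be hyperbolic in the Bass--Serre tree, since its axis would be $G$-invariant and $G$ would act on a line. A central elliptic element then has a common non-trivial power lying in every $\langle a_v\rangle$, because the vertex groups are commensurable.

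The claim fails for the Klein bottle group $K=\langle a,t\mid tat^{-1}=a^{-1}\rangle$, which is a legitimate input. It is $F_1\rtimes\Z$ with the inversion automorphism (order $2$ in $\Out(F_1)$), its centre is $\langle t^2\rangle$, and its abelianisation is $\Z\oplus\Z/2\Z$. For this one-vertex, one-loop decomposition $b_1(\Gamma)=1$, no non-trivial power of $a$ is central, and $t^2$ acts hyperbolically. So your conclusion that virtually cyclic abelianisation forces \emph{the} given decomposition to have $b_1(\Gamma)=0$ is false.

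The repair is short. In the non-elementary case, extend your map $a_v\mapsto 1/n_v$ by sending stable letters to $0$, and check the HNN relations $t_e a_{s(e)}^{p_e}t_e^{-1}=a_{t(e)}^{q_e}$ as well as the tree ones. These hold because $z$ is central: conjugating $z^{p_e}=a_{s(e)}^{p_e n_{s(e)}}$ by $t_e$ gives $q_e n_{s(e)}=p_e n_{t(e)}$. Combined with $G\to\Z^{b_1(\Gamma)}$, this does give torsion-free rank at least $b_1(\Gamma)+1$, so $\Gamma$ is a tree.

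The elementary GBS groups with non-trivial centre are:
\begin{itemize}
\item[(a)] $\Z$, which already has a tree decomposition;
\item[(b)] $\Z^2$, which is excluded by the abelianisation hypothesis;
\item[(c)] $K$, which has the tree presentation $\langle x,y\mid x^2=y^2\rangle$ via $x=t$, $y=ta$.
\end{itemize}
So existence must be stated as ``\emph{some} GBS decomposition of $G$ is a tree'', not ``every one is''. With the elementary case handled separately, or with existence simply cited from Levitt as the paper does, your proof is complete. Your sign-normalisation from the root to make $p_e,q_e>0$ is fine.
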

\begin{proof}
Let $G$ be a free-by-cyclic group with centre and a virtually cyclic
abelianisation. By \cite{Levitt07}*{Theorem 1.1(1)} there is a finite rooted tree \(T\)
such that \(G\) admits a presentation of the form
        \[
            \langle \{a_v\mid v \in V(T)\} \mid \{a_{s(e)}^{p_e} = a_{t(e)}^{q_e}
            \mid e \in E(T)\} \rangle,
        \]
        for some \(p_e, q_e \in \Z_{> 0}\) for all \(e \in E(T)\).
satisfying the conditions in the statement of the lemma, that defines $G$.  
The set of all finite presentations of $G$ can be recursively enumerated by starting with the presentation $\langle A \; | \; r=1 \rangle$ and repeatedly applying Tietze transformations.   
So the algorithm proceeds by listing the finite presentations of $G$ in this way and at each step checking whether any of the finite presentations listed thus has the GBS form above. 
Since we know at the outset that $G$ admits some presentation of this form this algorithm will terminate at the point where the finite list of presentations of $G$ we have computed contains at least one presentation of the correct form.   
%
%obtained from the given presentation 
%$\langle A \; | \; r=1 \rangle$ via some finite sequence of Tietze transformations.  kl
%it follows that all finite presentations   
\end{proof}

The argument used in the proof of Lemma~\ref{lem:helpful2} is quite general and can be used to establish the following result. 

\begin{lem}\label{lem:helpful}
There is an algorithm that takes as input any non-cyclic one-relator group $G$ with non-trivial centre and outputs a   
[f.g. free]-by-cyclic $F_n \rtimes_\phi \Z$ presentation defining $G$, that is, a finite presentation of the form 
\begin{equation}\label{eqn:FBC}
    \langle A, t \; \mid \; tat^{-1} = w_a  \; (a \in A) \rangle 
\end{equation}
where each $w_a \in F_A$ and the map $a \mapsto w_a$ $(a \in A)$ induces an automorphism $\phi$ of the free group $F_A$.    
\end{lem}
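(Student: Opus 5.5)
We follow the same enumeration strategy as in the proof of Lemma~\ref{lem:helpful2}. The algorithm does not construct the splitting directly. It searches through presentations of $G$ and stops as soon as it meets one of the required shape. Two things must be checked: that a presentation of the form \eqref{eqn:FBC} exists, and that the stopping condition can be recognised algorithmically.

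For existence, we use the result of Baumslag and Taylor \cite{BaumslagTaylor68} quoted in Section~\ref{seq:equivalences}. It says that every one-relator group with non-trivial centre is [f.g. free]-by-cyclic, so $G \cong F_n \rtimes_\phi \Z$ for some $n \geq 1$ and some $\phi \in \Aut(F_n)$. Such a group always has a presentation $\langle A, t \mid tat^{-1} = \phi(a)\ (a \in A)\rangle$ with $A$ a basis of $F_n$, and this is exactly of the form \eqref{eqn:FBC}. The hypothesis that $G$ is non-cyclic serves to exclude degenerate cases such as $\langle a \mid a^k \rangle$ (finite) and $\Z$. For these the Baumslag--Taylor statement is either inapplicable or forces $n = 0$.

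For the search, start from the input presentation $\langle X \mid r = 1 \rangle$ and enumerate all finite presentations obtainable from it by finite sequences of Tietze transformations. By Tietze's theorem, every finite presentation of $G$ appears in this list. For each presentation produced, test the following.
\begin{enumerate}
\item The generators split as $A \cup \{t\}$.
\item The relators are exactly $tat^{-1}w_a^{-1}$ for $a \in A$, with each $w_a \in F_A$.
\item The endomorphism $a \mapsto w_a$ of $F_A$ is an automorphism.
\end{enumerate}
Conditions (1) and (2) are syntactic checks. For condition (3), apply Nielsen reduction (or Stallings folding) to the tuple $(w_a)_{a \in A}$. This decides whether the tuple is a basis of $F_A$, and hence whether the map is surjective. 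Surjectivity is enough, since free groups of finite rank are Hopfian. Because a presentation of the required form exists, the search terminates, and we output the first presentation that passes all three tests.

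The main point needing care is existence in the precise form \eqref{eqn:FBC}, since the algorithmic part is routine. Concretely, we must confirm that the Baumslag--Taylor conclusion gives a finitely generated free kernel, so that $A$ is finite, and that the non-cyclic hypothesis rules out all edge cases. The infinite cyclic case would need $n = 0$, and the finite one-relator groups have centre but are not free-by-cyclic. If the cited result gives only an abstract isomorphism, this is harmless, because the enumeration reaches every finite presentation of $G$ and so no effective version of the isomorphism is needed.
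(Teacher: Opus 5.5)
Your proposal is correct and takes essentially the same approach as the paper. You get existence of a presentation of the form \eqref{eqn:FBC} from Baumslag--Taylor, recursively enumerate all finite presentations of $G$ via Tietze transformations as in Lemma~\ref{lem:helpful2}, and use a Nielsen/Whitehead-type algorithm to recognise when $a \mapsto w_a$ is an automorphism of $F_A$; your use of Nielsen reduction or Stallings folding together with Hopficity is a fine way to make that last step explicit.
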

\begin{proof}
There are classical algorithms due to Nielsen and Whitehead (see \cite{LyndonAndSchupp})
%%
%%It is a classical fact (e.g. using the \cite{}) that there is an algorithm 
%%
that take 
%%any finite set of words $W$ from the free group $F_A$, where $A$ is finite, and determines whether or not $W$ is a basis for the free group $F_A$. It follows that there is an algorithm that takes any 
any mapping $\theta \colon A \rightarrow F_A$ (where $A$ is a finite set) as
input and and determine whether or not $\theta$ extends to an automorphism of
$F_A$. This implies that there is an algorithm that takes any finite group
presentation $\langle B \; \mid \; Q \rangle$ and decides whether or not it has
the form \eqref{eqn:FBC} given in the statement of the lemma. We can thus apply
exactly the same argument from the proof of \cref{lem:helpful2}.
%all finite sequences of Tietze transformations until we come to a presentation that is
%of the desired form, which must terminate since we assumed \(G\) was non-cyclic
%one-relator with centre, and is thus free-by-cyclic with centre by \cite{BaumslagTaylor68}, and so admits such a presentation.
%Using this fact, the lemma now follows using exactly the same argument as in the proof of Lemma~\ref{lem:helpful}.   
\end{proof}

\subsection{Exponent sums}

A useful concept when dealing with abelianisations of groups is that of exponent sums. This simply
considers projections of the group onto maximal copies of \(\Z\) that are present in
the abelianisation. We typically consider exponent-sums of generators, although the
definition is more general.

\begin{dfn}
    Let \(G\) be a group and let \(a\in G\) be such that \(\ab(a) \cong \Z\) and
    \(\ab(G) = \langle \ab(a) \rangle \oplus K\), for some group \(K\); that is,
    \(\langle \ab(a) \rangle\) is a maximal copy of \(\Z\) in \(\ab(G)\). The
    \emph{exponent-sum} of \(g \in G\) with respect to \(a\) is denoted
    \(\expsum_a(g)\) and is defined to be the image of \(g\) under the
    epimorphism from \(G\) to \(\Z\) defined by composing the map \(\ab\) with the
    projection onto \(\langle \ab(a) \rangle\). 
\end{dfn}

\section{Decidability for free-by-cyclic groups with non-trivial centre and virtually cyclic abelianisation}
\label{sec:fbyc-with-centre}

%As observed above, every one-relator group with centre is actually a $\{$ f.g. free $\}$-by-cyclic group. Hence it is natural to ask to what extend the results of the previous section might be extended to the more general class of $\{$ f.g. free $\}$-by-cyclic groups with centre. Note that this class has received attention in the literature; see e.g. the recent word of Bridson and Piwek \cite{BridsonPiwek} proving profinite rigidity for this class. 

 In this section we show another facet of free-by-cyclic groups with non-trivial
 centre by embedding them into direct products, as in Proposition
 \ref{fbyc-embed}. As already observed, the class of one-relator groups with
 non-trivial centre is contained in the class of [f.g. free]-by-cyclic groups with
 non-trivial centre. 
 
 In Theorem \ref{fbyc-centre-decidable} we prove that \(\DPAB{G}\)
is decidable in a free-by-cyclic $G$ if both the centre and the abelianisation satisfy the hypothesis. The proof is similar to proving the decidability of the `plain' Diophantine Problem exhibited in \cite{equations_VDP}; one first embeds the group
as a finite-index subgroup inside a direct product of hyperbolic groups, and then
notices that this direct product has a decidable Diophantine Problem with recognisable
constraints. In our case, the hyperbolic groups are a virtually free group \(V\) and \(\Z\) and the commutator subgroup is a a finitely generated subgroup of \(V\). This setting allows us to apply \cref{dir-prod-rat-constraints}, noting that the Diophantine Problem in a virtually free group with rational constraints is decidable \cite{DahmaniGuirardel} and
has EDT0L solutions \cite{DiekertElder}, to complete the proof.

In order to prove that some of these facts occur, we first move to the world of
GBS groups based on trees. We require some notation to work with these groups.

\begin{ntn}
        If \(T\) is a rooted tree, we define an order on \(T\) (both on edges and vertices),
        where the root is minimal, and two items are comparable if they both lie on a minimal
        path from the root, with the `closer' one being smaller in this partial order.
        Further, if \(e \in E(T)\), then we define \(s(e)\) and \(t(e)\) to be the two
        ends of \(e\) such that \(s(e) < t(e)\) (note that \(s(e) \neq t(e)\) since
        \(T\) is a tree).
\end{ntn}
%\lc{Note similar recent results in https://arxiv.org/pdf/2603.17596, Lemma 4.2 and Cor. 1.5. While Ashot's results are more general our are finer and stronger, in a sense. We get more than an embedding into the direct product of virtually free and virtually abelian. Our second factor is purely abelian.}
\begin{comment}
{\color{purple} AL: I'm not sure I agree. Ashot's results are like those in
\cite{equations_VDP} - they are about groups already known to be commensurable
to direct products of free groups. Obviously, these are such examples, but
realising that is part of the point. Not that I'm trying to argue this
proposition isn't already known is some form, it follows from
\cite{equations_VDP} (or Ashot), but together with the fact that free-by-cyclic
with centre are virt. dir. prods of free groups (which we have a proof of above, but was definitely already known). Probably worth citing both of these here anyway. }
\end{comment}

    \begin{proposition}
        \label{fbyc-embed}
        Let \(G\) be a [f.g. free]-by-cyclic group with
        non-trivial centre and virtually cyclic abelianisation. 
        
        Then \(G\)
        embeds as a finite index subgroup in \(V \times \Z\) for some virtually
     free group \(V\). 
        Moreover, \(V \times \Z\) and the
        embedding of \(G\) into \(V \times \Z\) are computable from the
        presentation for \(G\).
    \end{proposition}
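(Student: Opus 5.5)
We will use \cref{lem:helpful2} to compute a GBS presentation
\[
G=\langle \{a_v\mid v\in V(T)\}\mid \{a_{s(e)}^{p_e}=a_{t(e)}^{q_e}\mid e\in E(T)\}\rangle
\]
with $T$ a finite rooted tree and all $p_e,q_e>0$. The plan is to build an explicit homomorphism $\psi\colon G\to \Z$ that sends the centre onto a finite-index subgroup of $\Z$. We will then show that the pair $(\pi,\psi)$ embeds $G$ with finite index in $V\times\Z$, where $\pi\colon G\to V:=G/Z(G)$ is the quotient map.

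The first step is to find a central element. In a tree-GBS group there are positive integers $N_v$ with $a_v^{N_v}$ all equal to a common element $z$. We compute them recursively along the tree. Start with $N_{\mathrm{root}}$ equal to a suitable product of the $p_e$ and $q_e$, and set $N_{t(e)}=N_{s(e)}q_e/p_e$, choosing the initial value so that every $N_v$ is an integer with $p_e\mid N_{s(e)}$. The element $z$ commutes with every generator $a_v$, since it is a power of $a_v$, so $z$ is central.

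Next we define $\psi(a_v)=L/N_v$, where $L=\operatorname{lcm}_v N_v$. The relations are respected because
\[
p_eL/N_{s(e)}=q_eL/N_{t(e)},
\]
which is exactly the recursion. This gives $\psi(z)=L\neq 0$, so $z$ has infinite order. In particular $\psi|_{Z(G)}$ is injective with image of finite index in $\Z$, because $Z(G)$ is cyclic. That $Z(G)$ is cyclic holds for non-elementary GBS groups, and the elementary cases, namely $\Z$, $\Z^2$ and the Klein bottle group, are handled directly.

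The second step is to identify $V=G/Z(G)$. We can take it to be $G/\langle z\rangle$, after checking that $\langle z\rangle$ has finite index in $Z(G)$. This quotient is the fundamental group of the tree of finite cyclic groups $\Z/N_v$ with edge groups $\Z/(N_{s(e)}/p_e)$. It is therefore a finite tree of finite groups, hence virtually free, and its presentation is computable.

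The third step is to check that $\Phi=(\pi,\psi)\colon G\to V\times\Z$ is injective with finite-index image. For injectivity, suppose $\Phi(g)=1$. Then $g\in\langle z\rangle$ and $\psi(g)=0$, so $g=1$ because $\psi(z)\neq 0$. For the index, the image contains $\{1\}\times L\Z$, since $\Phi(z)=(1,L)$. The image also projects onto $V$, so it has index at most $L$.

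Everything here is explicit arithmetic in the $p_e$ and $q_e$, so $V\times\Z$ and $\Phi$ are computable. The main obstacle I expect is verifying that $G/\langle z\rangle$ is exactly the stated tree of finite groups, and is thus virtually free. This should follow by adding the relation $z=1$ to the GBS presentation, which is equivalent to $a_v^{N_v}=1$ for all $v$, and reading the result as an amalgam along $T$. The care needed is in checking that the edge maps are injective, which holds since $N_{s(e)}/p_e=N_{t(e)}/q_e$.
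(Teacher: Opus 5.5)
Your proposal is correct and follows essentially the paper's route. You pass to the tree-GBS presentation via \cref{lem:helpful2} and quotient by the central element \(z=a_v^{N_v}\); the paper makes the same choice \(N_{\mathrm{root}}=\prod_e p_e\), so both get the same finite tree of finite cyclic groups. You then embed \(G\) by pairing the quotient map with a homomorphism \(G\to\Z\); the paper's \(a_v\mapsto\prod_{e<v}p_e\prod_{e\not<v}q_e\) agrees with your \(a_v\mapsto L/N_v\) up to a positive scalar.

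Two differences are worth noting. Your index bound (the image surjects onto \(V\) and contains \(\{1\}\times L\Z\)) is simpler than the paper's description of the image as a fibre product over a finite cyclic group. Also, the detour through \(Z(G)\) being cyclic can be dropped: with \(V=G/\langle z\rangle\), injectivity only uses \(\psi(z)\neq 0\).
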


To prove this proposition, we use the fact that if a GBS group has non-trivial centre and virtually cyclic abelianisation, then the defining graph of the GBS group is a tree.
This follows from \cite{Levitt07}*{Theorem 1.1 (1)}.
    \begin{proof}
    The group $G$ is a GBS group, as seen in Section \ref{seq:equivalences}. Moreover, by 
         \cite{Levitt07}{Theorem 1.1 (1)} it is a GBS based on a tree, so there is a rooted tree \(T\) with root \(0\)
        such that \(G\) admits the presentation
        \[
            \langle \{a_v\mid v \in V(T)\} \mid \{a_{s(e)}^{p_e} = a_{t(e)}^{q(e)}
            \mid e \in E(T)\} \rangle,
        \]
        for some \(p_e, q_e \in \Z_{> 0}\) for all \(e \in E(T)\). In addition,
        \cref{lem:helpful2} tells us that this presentation is computable from a
        presentation for \(G\).

        Note that for all \(v \in V(T)\), \(a_0^{\prod_{e \in E(T)} p_{e}} =_G
        a_v^{n_v}\), for some \(n_v \in \Z\). Thus \(a_0^{\prod_{e \in E(T)}
        p_{e}} \in Z(G)\), and so  \(\langle a_0^{\prod_{e \in E(T)} p_{e}}
        \rangle \trianglelefteq G\). Set \(V = G / \langle a_0^{\prod_{e \in
        E(T)} p_e} \rangle\), and note that a presentation for \(V\) is
        computable from the GBS presentation for
        \(G\). We will show that \(V\) is virtually free, by showing that it is
        the fundamental group of a finite graph of groups, with finite vertex
        and edge groups. Such groups are always virtually free.

        Since for all \(v \in V(T)\), we have \(a_0^{\prod_{e \in E(T)}
        p_{e}} =_G a_v^{n_v}\)
        for some \(n_v \in \Z\), so the subgroup \(\langle
        a_v \rangle\) has order at most \(n_v\) in \(V\), and thus we can express
        \(V\) using the following presentation, where we let
        \(n_0 := \prod_{e \in E(T)} p_e\).
        We have replaced \(a_v\) with \(x_v\) to avoid confusion between
        \(V\) and \(G\).
        \[
            \langle \{x_v \mid v \in V(T)\} \mid \{x_{s(e)}^{p_e} = x_{t(e)}^{q_e} 
            \mid e \in E\} \cup \{x_v^{n_v} \mid v \in V(T)\} \rangle.
        \]
        This presentation is just that of the fundamental group of a graph of
        groups with underlying graph \(T\), and where the set of vertex groups
        is \(\{\langle x_v \mid x_v^{n_v} = 1\rangle \mid v \in V(T)\}\). This is a
        finite collection of finite groups, and so the fundamental group,
        which is \(V\), must therefore be virtually free.

        We next show that \(G\) embeds with finite index in \(V \times \langle d
        \mid \rangle\cong V \times \Z\), and show that the embedding is computable. To do so, we
        must first define \(Q = \langle t \mid t^{\prod_{e \in E(T)} p_e q_e}
        \rangle\) and maps \(\sigma \colon V \to Q\) and \(\tau \colon \langle d
        \rangle \to Q\) by \(\sigma(x_v) = t^{\prod_{e \in E(T), e < v} p_e
        \cdot \prod_{e \in E(T) e \not < v} q_e}   \) and \(\tau(d) = t\). Next
        define \(S \leq V \times \langle d \mid \rangle\) by \(S = \{(v, d^p)
        \in V \times \langle d \mid \rangle \mid \sigma(v) = \tau(d^p)\}\).
        Finally, define \(\psi \colon G \to V \times \langle d \mid \rangle\) by
        \(\psi(a_v) = (x_v, d^{\prod_{e \in E(T), e < v} p_e \cdot \prod_{e \in
        E(T) e \not < v} q_e})\), and note that \(\psi\) is computable from the
        generalised Baumslag-Solitar presentation for \(G\). We shall show that
        \(\psi\) is a momomorphism with \(\im \psi = S\), leaving it remaining
        to show that \(S\) has finite index in \(V \times \langle d \mid
        \rangle\).

        We begin by showing that \(\psi\) is a homomomorphism, which we
        do by showing that it satisfies the relations of \(G\). Let
        \(f \in E(T)\). Then 
        \begin{align*}
            (\psi(a_{s(f)}))^{p_f}
            & =_V  (x_{s(f)}^{p_f}, d^{p_f \cdot \prod_{e \in E(T), e < s(f)} p_e \cdot \prod_{e \in
        E(T) e \not < s(f)} q_e}) \\
            & =_V (x_{t(f)}^{q_f},  d^{q_f \cdot \prod_{e \in E(T), e < t(f)} p_e \cdot \prod_{e \in
        E(T) e \not < t(f)} q_e}) \\
        & = (\psi(a_{t(e)}))^{q_e}.
        \end{align*}
        Thus \(\psi\) is a homomorphism.

        We next show that \(\im \psi \subseteq S\). Since \(S\) is a subgroup,
        it suffices to show that \(\psi(a_v)\in S\) for all \(v \in V(T)\). So
        let \(v \in V(T)\). Then \(\psi(a_v) = (x_v, d^{\prod_{e \in E(T), e <
        v} p_e \cdot \prod_{e \in E(T) e \not < v} q_e})\). In addition,
        \(\sigma(x_v) = t^{\prod_{e \in E(T), e < v} p_e \cdot \prod_{e \in E(T)
        e \not < v} q_e} = \tau(d^{\prod_{e \in E(T), e < v} p_e \cdot \prod_{e
        \in E(T) e \not < v} q_e})\). Thus \(\psi(a_v) \in S\), and \(\im \psi
        \subseteq S\).

        To show \(\im \psi = S\), it thus suffices to show that \(S \subseteq
        \im \psi\). So let \((h, d^p) \in S\). Let \(f \colon G \to V\) be the
        natural quotient map, and note that \(\psi\) maps an element \(g \in G\)
        to \((f(g), d^q)\), for some \(q \in \Z\). Since \(f\) is surjective, we
        have that there exists \(g \in G\) such that \(\psi(g) = (g, d^q)\), for
        some \(q \in \Z\). We next show that \(p \equiv q \mod \prod_{e \in E(T)} p_e q_e\). Since \((h, d^p), (h, d^q) \in S\), we have
        that \(\sigma(h) =_Q \tau(d^p) =_Q \tau(d^q)\). Thus \(t^p =_Q t^q\),
        and so \(p \equiv q \mod \prod_{e \in E(T)} p_e q_e\). It
        remains to find an element \(k \in G\) such that \(\psi(k) = (1, d^{p -
        q})\). Since \(p - q \equiv 0 \mod \prod_{e \in E(T)} p_e q_e\), we have that
        \(x_0^{\frac{p - q}{\prod_{e \in E(T)} q_e}} =_V 1\), and so 
        \begin{align*}
           \psi(a_0^\frac{p - q}{\prod_{e \in E(T)} q_e}) 
           & =_{V \times \langle d \mid \rangle} 
        (x_0^\frac{p - q}{\prod_{e \in E(T)}{q_e}},  d^\frac{(p - q) \cdot \prod_{e \in E(T), e < 0} p_e \cdot
        \prod_{e \in E(T), e \not < 0} q_e}{\prod_{e \in E(T)} q_e}) \\
           & =_{V \times \langle d \mid \rangle} 
        (1, d^\frac{(p - q) \cdot 
        \prod_{e \in E(T)} q_e}{\prod_{e \in E(T)} q_e}) \\
        & =_{V \times \langle d \mid \rangle} (1, d^{p - q}) 
        \end{align*} 
        Thus \(\psi(ga_0^\frac{p - q}{\prod_{e \in E(T)} q_e}) =_{V \times \langle
        d \mid \rangle} (h, d^q)(1, d^{p - q}) =_{V \times \langle d \mid \rangle}
        (h, d^p)\). We can conclude that \(\im \psi = S\).

        To complete the proofs of our claims about \(\psi\), it remains to show
        that \(\psi\) is injective. Let \(g \in \ker \psi\). Since \(\psi\) maps
        \(g\) to \(1\) in the \(V\)-coordinate, we have that \(g \in \ker f =
        \langle a_0^{n_0} wrangle\), and so \(g = a_0^{p n_0}\) for some \(p \in
        \Z\). Thus \(\psi(g) = (1, d^{pn_0 \prod_{e \in E(T)} q_e})\). As \(n_0 \neq 0\) and \(q_e \neq 0\) for all \(e \in E(T)\), it thus follows
        that equating the second coordinate in the expression for \(\psi(g)\) to
        \(1\) implies that \(p = 0\). Thus \(g = 1\) and \(\psi\) is injective.
%        
%        For \(v \in V(T)\), write \(\ell_v = \prod_{e \in E(T), e < v} p_e \cdot
%        \prod_{e \in E(T), e \not < v} q_e\) (and so
%        \(\psi(a_v) = (x_v, d^{\ell_v})\)). Let \(u\) be any word representing
%        \(g\) over the generators of \(G\) and their inverses. From the fact that \(\psi(g)\) is trivial in the second coordinate
%        second coordinate, we have that by applying \(\psi\) to \(u\) letterwise, we must have that
%        \(\sum_{v \in V(T)} \ell_v (\#{a_v}(u) - \#_{a_v^{-1}}(u)) = 0\).
%        Plugging in \(g = a_0^{p n_0}\) into this
%        expression gives \(\ell_0 p n_0 = 0\).  Since \(\ell_0, n_0 > 0\), we have that
%        \(p =  0\). Thus \(g = 1\), and \(\psi\) is injective.

        Since \(\psi \colon G \to V \times \langle d \mid \rangle\), with \(\im
        \psi = S\), we have that \(G \cong S\). Thus to complete the proof, it
        remains to show that \([V \times \langle d \mid \rangle :S] < \infty\).
        Define \(\phi \colon V \times\langle d \mid \rangle \to Q \times Q\) by
        \(\phi(h, d^p) ={Q \times Q} (\sigma(h), \tau(d^p))\). Let \((h, d^p)
        \in \ker \phi\). Then \(\sigma(h) =_Q 1 =_Q \tau(d^p)\), and so \((h,
        d^p) \in S\). Thus \(\ker \phi \leq S\), and so \([G :S] \leq [G: \ker
        \phi] < \infty\).
    \end{proof}

    It is not difficult to show that the commutator subgroups of our groups are
    indeed finitely generated, and thus rational. We include the short proof below
    for completeness.
    \begin{lem}
        \label{fbc-centre-comm-sbg}
        Let \(G\) be a [f.g. free]-by cyclic group with centre and virtually
        cyclic abelianisation. Then the commutator subgroup \([G, G]\) is finitely generated free.
        Moreover, a finite generating set for \([G, G]\) can be computed from
        a presentation for \(G\).
    \end{lem}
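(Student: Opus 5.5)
The plan is to use the free-by-cyclic structure directly. Write \(G = F \rtimes_\phi \Z\) with \(F\) finitely generated free; by \cref{lem:helpful} (or the given presentation) we may assume a presentation \(\langle A, t \mid tat^{-1} = w_a \ (a \in A)\rangle\) is in hand. There is a natural epimorphism \(\pi\colon G \to \Z\) killing \(F\) and sending \(t \mapsto 1\). Since \(\Z\) is abelian, \(\pi\) factors through \(\ab\), and so \([G,G] \le \ker \pi = F\). As a subgroup of a free group, \([G,G]\) is free. It then remains to prove that it is finitely generated, and that generators are computable.

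For finite generation, we first show that \([G,G]\) has finite index in \(F\). The abelianisation \(G^{\ab}\) is virtually \(\Z\) and surjects onto \(\Z\) via the induced map \(\bar\pi\). Hence \(\ker \bar\pi\) is a subgroup of \(G^{\ab}\) meeting the infinite cyclic part trivially, so it is finite. But \(\ker\bar\pi = \ab(F) = F/(F\cap[G,G]) = F/[G,G]\). Therefore \([G,G]\) has finite index in the finitely generated group \(F\), and so it is finitely generated (a finite index subgroup of \(F_n\) is free of rank \(k(n-1)+1\) by Schreier).

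For computability, we compute \(G^{\ab}\) from the presentation via Smith normal form. The image \(\ab(F)\) is the subgroup generated by the \(\ab(a)\), \(a\in A\), and it is a finite abelian group that we can compute explicitly. The restriction \(F \to \ab(F)\) is then an explicit homomorphism from the free group \(F_A\) to a finite group, determined by the images of the generators, and \([G,G]\) is its kernel. A finite generating set for this kernel is obtained by the Reidemeister--Schreier procedure. Concretely, we build the finite Cayley graph of \(\ab(F)\) with respect to the images of \(A\), choose a spanning tree, and take the Schreier generators.

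The only mildly delicate step is the identification \(F\cap[G,G] = [G,G]\), which relies on \([G,G]\le F\). That inclusion comes from \(\pi\) factoring through the abelianisation, so this step should be routine. No hypothesis on the centre is actually needed; it enters only through the context in which we have the presentation. If instead only a GBS presentation were available, we would first convert it via \cref{lem:helpful2} or \cref{lem:helpful}, or enumerate Tietze transformations as before.
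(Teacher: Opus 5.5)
Your proof is correct. For finite generation it follows the paper. Both arguments use \([G,G]\le F=\ker\pi\) and the fact that \(G^{\ab}\) is virtually \(\Z\) to get \(F/[G,G]\) finite. The paper does this by comparing the splitting \(G^{\ab}\cong F/[G,G]\times\Z\) with \(\Z\times A\). You do it by showing \(\ker\bar\pi\) is finite; the cleanest way to say this is that \(\ker\bar\pi\) has torsion-free rank \(1-1=0\).

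The genuine difference is in the computability step, and there your route is better. The paper enumerates finite-index normal subgroups \(H\le F\) and stops when \(F/H\cong A\). As written, this does not single out \([G,G]\), because \(F\) usually has many normal subgroups with quotient isomorphic to \(A\). For example, \(\langle x,y\mid x^2=y^4\rangle\) gives \(F\cong F_2\) and \(A\cong\Z/2\), and \(F_2\) has three subgroups of index two. To repair the paper's test one needs an extra check, for instance:
\begin{itemize}
\item \(H\) is \(\phi\)-invariant;
\item \(H\) contains \(w_a a^{-1}\) and \([a,a']\) for all \(a,a'\in A\).
\end{itemize}
This makes \(G/H\) abelian, and then \(F/H\cong A\) forces \(H=[G,G]\).

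Your approach avoids the issue entirely. You realise \([G,G]\) directly as the kernel of the explicit map \(F_A\to\ab(F)\); the target is exactly the torsion subgroup of \(G^{\ab}\), which you can enumerate. Reidemeister--Schreier on its finite Cayley graph then gives generators, and the Schreier formula gives the rank explicitly.

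One small slip: \cref{lem:helpful2} converts free-by-cyclic presentations into GBS ones, not the other way round. For other input presentations you need the generic Tietze-enumeration argument, which you also mention.
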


    \begin{proof}
%        By \cite{Pietrowski}*{Theorem 1}, \(G\) is a generalised
%        Baumslag-Solitar group with centre, and thus by
%        \cite{Levitt15}*{Proposition 4.1}, \(G \cong F \rtimes_\phi \Z\) for
%        some finitely generated free group \(F\) and some automorphism \(\phi
%        \in \Aut(F)\) that has finite order in \(\Out(F)\). Moreover, using
%        \cref{lem:helpful2}, we can compute \(F\) and \(\phi\) from a
%        presentation for \(G\).
%
        Since \(G\) has a virtually cyclic abelianisation, \(G / [G, G] \cong \Z
        \times A\), for some finite abelian group \(A\). Note that \(A\) is
        computable from the presentation. Noting that \([G, G] \subseteq F\)
        and so \([G, G] \cap \Z = \{1\}\), we have \(G / [G, G] \cong F
        \rtimes_\phi \Z / [F \rtimes_\phi \Z, F \rtimes_\phi \Z] \cong F / [F
        \rtimes_\phi \Z, F \rtimes_\phi \Z] \times \Z\). Therefore, \(F / [G, G]
        \cong A\). Thus \([G, G]\) is a finite-index subgroup of \(F\), and so
        is finitely-generated free. Moreover, by enumerating all
        finite-index subgroups \(H\) of \(F\), checking if they are normal,
        and then when they are, computing the quotient \(F / H\), and checking
        if it is isomorphic to \(A\), we can compute a finite generating set for
        \([G, G]\).
    \end{proof}

    The second requirement of \cref{dir-prod-rat-constraints} is that
    the rational constraint (in our case the commutator subgroup) be
    fully contained in one of the factors of the direct product. The proof
    of the following firstly makes that observation and then applies
    \cref{dir-prod-rat-constraints}.

    \begin{proposition}
        \label{embed-into-VxA-plus-fg-comm-decidable-DP-ab}
        Let \(G\) be a finitely generated group such that \([G, G]\) is finitely
        generated and such that \(G\) embeds as a finite index subgroup into \(V
        \times A\) for some virtually (f.g.) free group \(V\) and
        (f. g.) abelian group \(A\). 
        
        If \(V \times A\),
        the embedding of \(G\) into \(V \times A\) and the finite generating
        set for \([G, G]\) are considered as input or computable,
        then \(G\) has decidable \(\DPAB{G}\). 
    \end{proposition}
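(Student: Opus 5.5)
Identify \(G\) with its image in \(V \times A\). The plan is to reduce an instance of \(\DPAB{G}\) to an instance of the Diophantine Problem in \(V \times A\) with rational constraints lying entirely in \(V \times \{1\}\), which is decidable by \cref{dir-prod-rat-constraints}. The hypotheses of that result hold: \(\DPRat{V}\) is decidable by \cite{DahmaniGuirardel}, and \(\DP{A}\) is decidable because it is linear algebra over \(\Z\).

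First we reformulate the abelianisation constraints as equations with membership constraints in \([G,G]\). Since \(G\) is finitely generated, \(G^{\ab}\) is computable. Fix generators \(g_1,\dots,g_k\) of \(G\). An abelian equation in \(\Theta\) has the form \(\sum_i n_i \ab(X_i) = \ab(c)\) with \(c \in G\). Any solution of \(\Theta\) can be written as \(\ab(X_i) = \sum_j m_{ij}\ab(g_j)\) for integers \(m_{ij}\). Introduce fresh variables \(Z_i\) together with the equation \(X_i = W_i Z_i\), where \(W_i\) ranges over products \(g_1^{m_{i1}}\cdots g_k^{m_{ik}}\), and constrain \(Z_i \in [G,G]\). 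A cleaner way to encode this is to note that \(\ab\) satisfies \(\Theta\) if and only if the word \(X_1^{n_1}\cdots X_r^{n_r}c^{-1}\), formed as an element of \(G\), lies in \([G,G]\). So for each equation of \(\Theta\) we add a variable \(Y\), the equation \(Y = X_1^{n_1}\cdots X_r^{n_r}c^{-1}\), and the constraint \(Y \in [G,G]\). This turns the instance into a system over \(G\) in which some variables are constrained to \([G,G]\).

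Next we pass to \(V \times A\). Because \(G\) has finite index, it is a rational subset (finitely generated), and \(G\) is recognisable in \(V\times A\). Thus a system over \(G\) becomes a system over \(V\times A\) in which every variable is constrained to lie in \(G\), together with the \([G,G]\) constraints. The main obstacle is that the \(G\)-constraint is not contained in \(V\times\{1\}\), so \cref{dir-prod-rat-constraints} does not apply to it directly. We handle this as follows.

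\begin{itemize}
\item For the \([G,G]\) constraints: \([G,G] \le [V\times A, V\times A] = [V,V]\times\{1\}\). Given the finite generating set of \([G,G]\), its image is a finitely generated, hence rational, subset of \(V\times\{1\}\), so these constraints have the required form.
\item For the \(G\) constraints: choose coset representatives \(r_1,\dots,r_m\) of \(G\) in \(V\times A\). We can compute a finite-index normal subgroup \(N \le G\) of the form \(N = N_V \times N_A\), where \(N_V\) is finite-index in \(V\) and \(N_A\) is finite-index in \(A\). Then \(X \in G\) is equivalent to the disjunction over the finitely many cosets \(rN \subseteq G\) of the condition \(X = rX'\) with \(X' \in N\).
\item The condition \(X' \in N_V\times N_A\) splits as \(X' = (X'_V, X'_A)\), with \(X'_V\) a rational constraint in \(V\times\{1\}\). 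The condition \(X'_A \in N_A\) is defined by the equations in \(A\) saying that \(X'_A\) is an \(\ell\)-th power for suitable \(\ell\), or more precisely lies in a finite-index subgroup, which is expressible by equations in an abelian group.
\end{itemize}

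The disjunction over cosets is finite, so decidability is preserved. The remaining technical point is to check that such a product subgroup \(N\) exists and is computable. We take \(N\) to be the core of \(G\) intersected with \((G\cap V)\times(G\cap A)\), which is of finite index because \(G\) is of finite index in \(V \times A\). The step I expect to require the most care is this splitting of the \(G\)-membership condition into a \(V\)-rational part and \(A\)-equational part.
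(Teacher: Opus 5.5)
Your reduction is the paper's own. The abelianisation constraints become constraints \(Y\in[G,G]\), where \([G,G]\) is a finitely generated, hence rational, subset of \(V\times\{1\}\), and \cref{dir-prod-rat-constraints} handles the rest. The one divergence is the recognisable constraint \(X\in G\). The paper simply cites \cite{Levine23}*{Proposition 6.1}, which also preserves the EDT0L property. You instead eliminate it by hand, via a finite disjunction over cosets of a product subgroup \(N_V\times N_A\le G\). That idea is sensible and more self-contained, but as written this step has a gap.

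\textbf{First problem.} \cref{dir-prod-rat-constraints} only admits constraints inside \(V\times\{1\}\), and every equation over \(V\times A\) acts on both coordinates at once. So you cannot simply ``split'' \(X'=(X'_V,X'_A)\) and impose equations on \(X'_A\) alone.

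\textbf{Second problem.} It is false that an arbitrary finite-index subgroup of an abelian group is equationally definable. A system over \(\Z^2\) acts coordinatewise with one and the same integer matrix, so its definable sets are products of cosets of a common \(d\Z\). The subgroup \(\{(x,y)\mid x+y \text{ even}\}\) is not of that form. Your \(\ell\)-th power formulation only works after shrinking \(N_A\) to \(\ell A\).

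\textbf{A repair.} Let \(C\) be the normal core of \(G\) in \(V\times A\); it is computable by coset enumeration, since the index is finite. Put \(N_V=C\cap V\), which is normal of finite index in \(V\), and choose \(m\) divisible by the exponents of \(V/N_V\) and of \(A/(C\cap A)\). Then \(\{WY^m\mid W\in N_V\times\{1\},\ Y\in V\times A\}=N_V\times mA\). This is a finite-index subgroup of \(V\times A\) contained in \(C\subseteq G\). Hence \(X\in G\) if and only if \(X=rWY^m\) for one of finitely many computable \(r\), with \(W\) subject only to a rational constraint in \(V\times\{1\}\).

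Your \(\operatorname{Core}(G)\cap((G\cap V)\times(G\cap A))\) does in fact equal \((C\cap V)\times(C\cap A)\), but that needs an argument you did not give; taking the latter directly avoids it.

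Alternatively, note that the proof of \cref{dir-prod-rat-constraints} decouples the two coordinates. It therefore equally allows constraints \(R_V\times R_A\) with \(R_A\subseteq A\) rational, the \(A\)-part being handled by \(\DPRat{A}\).
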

    
    \begin{proof}
        First note that \([G, G] \subseteq [V \times A, V \times A] = [V, V]
        \subseteq V\). Since \([G, G]\) is finitely generated, it is a rational
        subset of \(V \times A\) that is fully contained in \(V\). Next, notice
        that as \(G\) has finite index in \(V \times A\), it is a recognisable set.

        Let \(\mathcal E_0\) be a system of equations with abelianisation
        constraints in \(G\). Since \(G \leq V \times A\) with finite index,
        this can be translated into a system of equations \(\mathcal E_1\) in
        \(V \times A\) with the recognisable constraints that every variable
        lies in \(G\), along with the rational constraints that \(X \in [G, G]\)
        for each abelianisation constraint in \(\mathcal E_0\).

        By \cite{DiekertElder}*{Theorem 14.2}, \(\DPRat{V}\) and
        \(\DPRat{A}\) are both decidable and EDT0L.
        Thus \cref{dir-prod-rat-constraints} implies that
        the Diophantine Problem in \(V \times A\) with rational constraints
        fully contained in \(V\) is EDT0L. To add the recognisable constraints,
        apply \cite{Levine23}*{Proposition 6.1}.
    \end{proof}

    We can now combine the results from this section to obtain the following.

    \begin{theorem_no_number}[\ref{fbyc-centre-decidable}]
       Let \(G\) be a [finitely-generated free]-by-cyclic group with centre and virtually cyclic
       abelianisation. Then \(\DPAB{G}\) is decidable and EDT0L.
    \end{theorem_no_number}

    \begin{proof}
        By 
        \cref{embed-into-VxA-plus-fg-comm-decidable-DP-ab}, it suffices to show
        that \([G, G]\) is finitely generated with a finite generating set
        computable from \(G\) and \(G\) embeds with finite index into \(V \times
        \Z\), for some virtually finitely generated free group \(V\), with \(V
        \times \Z\) and the embedding computable from the presentation for
        \(G\). The former follows by \cref{fbc-centre-comm-sbg} and the
        latter by \cref{fbyc-embed}.
    \end{proof}

    Since the class of one-relator groups with centre is a proper subclass of
    the class of [finitely-generated free]-by-cyclic groups with centre, we
    have the following.
    
    \begin{cor_no_number}[\ref{orgwc-decidable-cor}]
       Let \(G\) be a one-relator group with centre and virtually cyclic
       abelianisation. Then \(\DPAB{G}\) is decidable and EDT0L. 
    \end{cor_no_number}

    \begin{proof}
        By \cite{Pietrowski}*{Theorem 1}, one-relator groups with centre and virtually
        cyclic abelianisation are GBS groups with centre. We can moreover use
        an identical argument to that provided in \cref{lem:helpful2}
        to compute a free-by-cyclic presentation for a one-relator group with
        centre from a one-relator presentation. The result thus follows
        from \cref{fbyc-centre-decidable}.
    \end{proof}

\section{Tree products of infinite cyclic groups and tree-GBS groups} \label{sec:treeprod-GBS}
In the previous section we made use of the fact that any [f.g. free]-by-cyclic group with non trivial centre and virtually cyclic abelianisation is isomorphic to a GBS-group where, as a graph of groups, the underlying graph is a tree. 
So a tree-GBS group is a graph of groups where all vertex and edge groups are $\Z$, and the underlying graph of the graph of groups is a tree. Note that graphs of groups where the graph is a tree are usually called tree of groups in the literature. 
So a tree-GBS group is a tree of groups with infinite cyclic edge and vertex groups.  
Members of this class are called \emph{tree-GBS groups} in the literature. The tree-GBS groups are also closely related to another class of groups, namely tree products of infinite cyclic groups. 
A tree product of infinite cyclic groups is a graph of groups where all the vertex groups are $\mathbb{Z}$ and the underlying graph is a tree. Thus the only difference with tree-GBS groups is that the edge groups can be either $\Z$ or $\{ 1 \}$ in a tree product of infinite cyclic groups. 
It is immediate from the definitions that any tree product of infinite cyclic groups is a free product of tree-GBS groups, where the free product decomposition is obtained by deleting all the edges labelled by $\{1\}$ in the graph product description of the group.   
In this section we explore the connections with tree-GBS groups and more generally for tree products of infinite cyclic groups. 
Tree products of infinite cyclic groups have been studied e.g. in \cite{Picantin} where is proved that every tree product of infinite cyclic groups is a Garside group. 
Some papers on tree-GBS groups include e.g. \cite{Dudkin24} and 
\cite{JonesLogan25}.  

First we observe that the class of free-by-cyclic groups considered in the
previous section are exactly the tree-GBS groups. 
\begin{proposition}\label{prop:TreeGBP} 
The following are equivalent for a finitely generated group $G$:
\begin{enumerate} 
\item $G$ is a tree-GBS group; 
\item $G$ is a [f.g. free]-by-cyclic group with non trivial centre and virtually cyclic abelianisation.
\end{enumerate}  
\end{proposition}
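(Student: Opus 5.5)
The plan is to prove the two implications separately, relying on Levitt's results on GBS groups cited above together with the equivalence (ii)$\iff$(iii) from \cite{Lev15}.

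\emph{(2)$\Rightarrow$(1).} This direction is essentially already done in the proof of \cref{fbyc-embed}. A [f.g. free]-by-cyclic group $G$ with non-trivial centre is a GBS group with non-trivial centre by \cite{Lev15}*{Proposition 4.1}. If in addition its abelianisation is virtually cyclic, then \cite{Levitt07}*{Theorem 1.1 (1)} gives a presentation of the form in \cref{lem:helpful2}, with underlying graph a finite tree $T$ and all vertex and edge groups infinite cyclic. That is exactly a tree-GBS group.

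\emph{(1)$\Rightarrow$(2).} Let $G$ be a tree-GBS group with tree $T$ and presentation
\[
\langle \{a_v \mid v\in V(T)\} \mid \{a_{s(e)}^{p_e}=a_{t(e)}^{q_e} \mid e\in E(T)\}\rangle .
\]
I will check the two properties and then apply \cite{Lev15}.
\begin{enumerate}
\item[(a)] \emph{Non-trivial centre.} Replacing generators by inverses if needed, we may take $p_e,q_e\neq 0$ (indeed positive). Root $T$ at $0$ and set $N=\prod_{e} p_e q_e$. Inducting along $T$ from the root, $a_0^N$ is a power of every $a_v$. Hence $a_0^N$ commutes with every generator, so it is central. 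It is non-trivial because vertex groups inject into the fundamental group of a graph of groups.
\item[(b)] \emph{Virtually cyclic abelianisation.} $G^{\ab}$ is generated by the $|V(T)|$ classes $\bar a_v$, subject to the $|E(T)|=|V(T)|-1$ relations $p_e\bar a_{s(e)}=q_e\bar a_{t(e)}$. So its torsion-free rank is at least $1$. Rationally, each relation expresses $\bar a_{t(e)}$ as a rational multiple of $\bar a_{s(e)}$, so induction along the tree shows $G^{\ab}\otimes\mathbb Q$ is spanned by $\bar a_0$. Thus $G^{\ab}\cong \Z\oplus$ (finite), which is virtually cyclic.
\item[(c)] Since $G$ is a GBS group with non-trivial centre, \cite{Lev15}*{Proposition 4.1} gives $G\cong F_n\rtimes_\phi\Z$ with $\phi$ of finite order in $\Out(F_n)$. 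In particular $G$ is [f.g. free]-by-cyclic, with non-trivial centre by (a) and virtually cyclic abelianisation by (b).
\end{enumerate}

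The main obstacle is bookkeeping rather than ideas. One point is making the definition of tree-GBS precise enough that every edge inclusion is injective, i.e.\ $p_e,q_e\ne0$. This is automatic, since edge groups are $\Z$ and inclusions are injective. Another point is the degenerate case where $T$ is a single vertex, so $G\cong\Z$. This must be accepted as $F_1\rtimes\Z$ or $\{1\}\rtimes\Z$ under the paper's conventions ($n\geq 1$); either way $\Z$ has centre and cyclic abelianisation, so it causes no real trouble.
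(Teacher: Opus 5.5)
Your proof is correct and follows the paper's argument. For (2)$\Rightarrow$(1) you use \cite{Lev15} and then Levitt's tree criterion; for (1)$\Rightarrow$(2) you use the central power $a_0^N$ (the paper's first option) and then \cite{Lev15}. The one difference is that you check the virtually cyclic abelianisation by hand, counting $|V(T)|$ generators against $|V(T)|-1$ relations and noting that $\bar a_0$ spans rationally, whereas the paper cites \cite{Levitt07}*{Theorem 1.1} or \cite{Robinson11}*{Theorem 2}.

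One small correction to your final remark: $\Z$ is not $F_1\rtimes\Z$, since that group is $\Z^2$ or the Klein bottle group. So the one-vertex case $G\cong\Z$ satisfies (2) only if the trivial fibre $F_0$ is allowed, an edge case the paper's statement also glosses over.
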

\begin{proof} 
It follows from Lemma~\ref{lem:helpful2} that if 
$G$ is a [f.g. free]-by-cyclic group with non trivial centre and virtually cyclic abelianisation then $G$ is a tree-GBS group. 

For the converse, let $G$ be a tree-GBS group.
The fact that every tree-GBS group has non-trivial center is proved in the first two paragraphs of the proof of Proposition~\ref{fbyc-embed}. 
Alternatively the fact that every tree-GBS group has non-trivial center can be deduced from 
\cite{Levitt07}*{Theorem 1.1}
%[Theorem 1.1, Levitt, G. (2007). On the automorphism group of generalized Baumslag–Solitar groups. Geometry \& Topology, 11(1), 473-515.] 
together with the fact that the first Betti number of a tree is $0$.  
We already say above that it is a consequence of \cite{Lev15} that the class of GBS groups with non-trivial center is equal to the class of   
[f.g. free]-by-cyclic group with non trivial centre. 
Hence it follows that $G$ is a [f.g. free]-by-cyclic group with non trivial centre.  
To complete the proof we just need to verify that every tree-GBS group has
a virtually cyclic abelianisation. This follows from \cite{Levitt07}*{Theorem 1.1}
%, Levitt, G. (2007). On the automorphism group of generalized Baumslag–Solitar groups. Geometry \& Topology, 11(1), 473-515. ]
or alternatively from \cite{Robinson11}*{Theorem 2} 
%, Robinson, D. J. S. (2011). The Schur Multiplier of a Generalized Baumslag-Solitar Group. Rendiconti del Seminario Matematico della Universita di Padova]
which are general results that compute the torsion-free rank of the abelianisation of a GBS group.    
\end{proof}

In general, the decidability of the Diophantine Problem is not preserved when taking graphs of groups. Some positive results exist in this direction: see, for example \cite{LohreySenizergues}. On the other hand there are even GBS groups with undecidable Diophantine Problems \cite{BobAlexinprep}. 
The following result show that for tree products of infinite cyclic groups the Diophantine Problem is decidable, and for this class the result classifies exactly when \(\DPAB{G}\) is decidable. 

\begin{theorem_no_number}[\ref{thm:TreeProduct}] 
Let $G$ be a tree product of infinite cyclic groups.  
Then $G$ has decidable Diophantine Problem. %%jkkwith EDT0L solutions. 
Furthermore:  
\begin{enumerate} 
\item if all edge groups are infinite cyclic then \(\DPAB{G}\) is decidable and EDT0L; 
\label{tree-prod-dec}
\item otherwise, \(\DPAB{G}\) is undecidable. 
\label{tree-prod-undec}
  \end{enumerate}
\end{theorem_no_number}
\begin{proof} 
Since $G$ is a tree product of infinite cyclic groups it follows that $G$ is a
free product of tree-GBS groups. From \cref{fbyc-embed} and \cref{prop:TreeGBP}, each of the individual
tree-GBS groups is virtually a direct product of hyperbolic groups, and thus
has a decidable existential theory by \cite{equations_VDP}*{Theorem 3.1}. We can
thus apply \cite{LohreySenizergues}*{Theorem 1} which shows stability of the existential
theory under free products, to conclude that \(G\) has a decidable existential theory,
and hence Diophantine Problem.

For \eqref{tree-prod-dec}, if all the edge groups are infinite cyclic then $G$
is a tree-GBS group and thus by Proposition~\ref{prop:TreeGBP} and
Theorem~\ref{fbyc-centre-decidable} it follows that \(\DPAB{G}\) is decidable
and EDT0L.

It remains to prove \eqref{tree-prod-undec}. For this we need to show if at
least one of the edge groups is trivial then \(\DPAB{G}\) is undecidable. Let
\(T\) be the underlying tree in the graph of groups defining \(G\), and let
\(\bar{T}\) be the forest obtained from \(T\) be deleting all edges whose edge
groups are trivial. Let \(\bar{T}_1, \ldots, \bar{T}_m\) be the connected
components of \(\bar{T}\). From our case assumption, at least one such edge must
exist, and so \(\bar{T}\) has at least two connected components; that is, \(m
\geq 2\). We have that \(G \cong \pi_1(\bar{T}_1) \ast \cdots \ast
\pi_1(\bar{T}_m)\). 
Moreover,
by definition, each \(\pi_1(\bar{T}_i)\) is a tree-GBS group, and thus has a
virtually infinite cyclic abelianisation \cite{Levitt07}*{Theorem 1.1}. Since we
additionally have that \(\ab(G) = \ab(\pi_1(\bar{T}_1)) \times \cdots \times
\ab(\pi_1(\bar{T}_m))\), it follows that for each \(i\) there exists \(v_i \in
\pi_1(\bar{T}_i)\) such that \(\langle \ab_G(v_i) \rangle \cong \Z\).

Now consider the subgroup \(H = \langle v_1, \ldots, v_m \rangle\) of \(G\).
Since each \(v_i\) projects onto an infinite order element in \(\ab(G)\), we
must have that each \(v_i\) has infinite order in \(G\). As each lie in a
distinct factor in the free product defining \(G\), it follows that \(H\) is
free of rank \(m\). To complete our proof, we show that any algorithm that
can decide \(\DPAB{G}\) can decide \(\DPAB{H}\). Since the latter is
undecidable \cite{CiobanuGarreta}*{Theorem A}, this will complete the proof.
To do so, we first construct a retract from \(G\) onto \(H\). We do this
by constructing retracts \(r_i \colon \pi_1(\bar{T}_i) \to \langle v_i \rangle\)
for each \(i\). Let \(i \in \{1, \ldots, m\}\). Then \(\ab_G(\pi_1(\bar{T}_i))\)
is virtually \(\Z\) \cite{Levitt07}*{Theorem 1.1}, and so retracts onto the copy of \(\Z\) inside it.
Composing this retract with \(\ab_G\) gives a function from \(\pi_1(\bar{T}_i)\) to
\(\Z\) that maps \(v_i\) to the generator. It thus follows that there is a retract
\(r_i \colon \pi_1(\bar{T}_i) \to \langle v_i \rangle\). As \(G\) is a free
product of the groups \(\pi_1(\bar{T}_i)\), these now define a unique retract \(r
\colon G \to H\) that extends each of the retracts \(r_i\).

Now suppose that \(\mathcal{E}\) is a system of equations in \(H\) with
abelianisation constraints. We can assume every equation
in \(\mathcal E\) is of the form \(XY = Z\), where \(X\), \(Y\) and \(Z\) can be
either constants or variables (see for example \cite{CiobanuDiekertElder}*{Section 4}). We will use \(\mathcal E\) to define a new system
\(\bar{\mathcal E}\) of equations with abelianisation constraints in \(G\) that
admits a a solution if and only if \(\mathcal E\) does.  For each equation \(XY =
Z\) in \(H\), add the same equation to \(\bar{\mathcal E}\). To consider the abelianisation constraints, write \(\ab(G) = \Z^m \times A\),
where \(A\) is a finite abelian group, and let \(p\) be the exponent of \(A\). For
each
abelianisation constraint \(X \in [H, H]\), add the abelianisation constraint
\(X^p \in [G, G]\) to \(\bar{\mathcal{E}}\). Naturally, every solution to
\(\mathcal{E}\) will also be a solution to \(\bar{\mathcal E}\). Moreover, if
there exist \(x, y, z \in G\) such that \(xy = z\), then \(r(x) r(y) = r(z)\).
 If \(x \in G\) is such that \(x^p \in [G, G]\), then \(r(x)\) will lie in
 \([H, H]\) if and only if the exponent-sum of each \(v_i\) in \(r(x)\) is
 \(0\), which is true if and only if the exponent-sum of each \(v_i\) in
 \(r(x^p)\) is \(0\). By the definition of \(r\), this holds if and only if for each \(i\)
 the exponent-sum across all of the elements mapped by \(r\) to \(v_i\) is
 \(0\) (noting that the \(p\)th power ensures nothing in \(A\) can appear). But
 since \(x^p \in [G, G]\), this must hold, and so we can conclude that
 \(r(x) \in [H, H]\). We have thus shown that if \(\bar{\mathcal E}\) admits a
 solution, then applying \(r\) to this solution yields a solution to
 \(\mathcal E\). We can conclude that \(\mathcal E\) admits a solution if and only
 if \(\bar{\mathcal E}\) does. Since \(\bar{\mathcal E}\) is computable from
 \(\mathcal E\), we can thus use \(\DPAB{G}\) to solve \(\DPAB{H}\). Since
 the latter is undecidable \cite{CiobanuGarreta}*{Theorem A}, the result follows.
\end{proof}
 In  \cite{Levitt07}*{Theorem 1.1}
%Levitt, G. (2007). On the automorphism group of generalized Baumslag–Solitar
%groups. Geometry \& Topology, 11(1), 473-515.] 
and  \cite{Robinson11}*{Theorem 2},
%Robinson, D. J. S. (2011). The Schur Multiplier of a Generalized Baumslag-Solitar Group.
%Rendiconti del Seminario Matematico della Universita di Padova]
formulas are
given for the torsion free rank of the abelianisations of GBS groups, given in
terms of Betti numbers of their defining graphs. In particular those results may
be applied to describe exactly those GBS groups with virtually cyclic
abelianisation:   

Combining our results with Levitt's theorem describing the rank of the abelianisation
of a GBS group in terms of the graph of groups
\cite{Levitt07}*{Theorem 1.1} gives the following:  
%{\color{red} AL: Just to confirm - graphs in graphs of groups need to be connected, right (I think a spanning tree is needed to define the presentation)? 
%}

\begin{cor}
\label{prop:GBSWithVirtCyclicAbel} 
Let $G$ be a GBS group 
with virtually cyclic abelianisation. 
%%
%%such that the abelianisation of $G$ has torsion-free rank 1.
%%
Then one of the following holds: 
\begin{enumerate}
\item $G$ is a tree-GBS group and  \(\DPAB{G}\) is decidable and EDT0L; or else     
\item $G \cong BS(m,n)$ for some $m,n$ with $|m| \neq |n|$.   
\end{enumerate}
\end{cor}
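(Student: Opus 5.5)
The plan is to use Levitt's computation of the first Betti number of a GBS group, \cite{Levitt07}*{Theorem 1.1}. Write \(G = \pi_1(\Gamma)\) for a finite connected labelled graph \(\Gamma\) with infinite cyclic vertex and edge groups, and let \(\Delta \colon G \to \mathbb{Q}^\ast\) be the modular homomorphism. Levitt's theorem says that the torsion-free rank of \(\ab(G)\) equals \(b_1(\Gamma) + 1\) if \(\Delta\) is trivial (\(G\) unimodular), and \(b_1(\Gamma)\) otherwise. Since \(\ab(G)\) is virtually cyclic, this rank is \(1\) (it cannot be \(0\), because a GBS group always surjects onto \(\Z\) in this way). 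So exactly one of two cases occurs: either \(b_1(\Gamma) = 0\) and \(G\) is unimodular, or \(b_1(\Gamma) = 1\) and \(G\) is not unimodular.

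In the first case \(\Gamma\) is a tree, so \(G\) is a tree-GBS group. \cref{prop:TreeGBP} then identifies \(G\) as a [f.g. free]-by-cyclic group with non-trivial centre and virtually cyclic abelianisation, and \cref{fbyc-centre-decidable} gives that \(\DPAB{G}\) is decidable and EDT0L. This is alternative (1). (It does not matter whether the given presentation is already in tree form: \cref{lem:helpful2} computes one, so the decidability statement is effective.)

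In the second case \(\Gamma\) has a unique embedded cycle. Choose the maximal tree to contain all edges except one edge \(e\) on that cycle, so that \(G\) is an HNN extension with stable letter \(t\) of the tree-GBS group \(G_0\) carried by \(\Gamma \setminus e\). I will now attempt to reduce \(\Gamma\) by elementary collapses, i.e. contracting an edge one of whose labels is \(\pm 1\); these do not change \(G\). The aim is to reach the one-vertex, one-loop graph, whose group is \(\langle a, t \mid t a^m t^{-1} = a^n \rangle = BS(m,n)\). There non-unimodularity is precisely \(\Delta(t) = m/n \neq \pm 1\), that is, \(|m| \neq |n|\), which is alternative (2).

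The main obstacle is this reduction step: showing that, under the hypotheses, the trees hanging off the cycle and the non-loop edges on the cycle can all be collapsed. My first attempt will be to argue edge by edge. For a hanging edge \(f\) with labels \(p_f, q_f\), I will try to show that \(\gcd(p_f, q_f) \neq 1\), or both labels different from \(\pm 1\), would force extra torsion-free rank in \(\ab(G)\) or contradict the rank count above; that would make one label \(\pm 1\), and then \(f\) can be contracted. If that fails, I will use Levitt's deformation-space moves (slides and inductions) to move edges onto a single vertex, so that everything is absorbed into the loop at one vertex. I expect this step to need the most care, and I would check it carefully against small examples of a loop with a hanging edge before committing to the edge-by-edge argument.
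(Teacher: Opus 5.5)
\textbf{There is a genuine gap, and it cannot be closed: alternative (2) is false as stated.}

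Your opening step is correct. Levitt's rank formula gives either \(b_1(\Gamma)=0\), or \(b_1(\Gamma)=1\) with \(\Delta\) non-trivial, and your first case is fine. This rank count is all the paper's one-line derivation from \cite{Levitt07}*{Theorem 1.1} rests on, so the paper does not supply the missing step either.

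The gap is the reduction step in your second case. A hanging edge with labels \(p,q\neq\pm1\) adds one generator and one relation \(pa=qb\) to \(\ab(G)\). That contributes only torsion, so no rank count can force a label \(\pm1\). Slides, inductions and \(\mathcal{A}^{\pm}\)-moves preserve the isomorphism type, so they cannot turn a non-Baumslag--Solitar group into one.

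Concretely, take a vertex \(v\) with a loop labelled \((1,3)\) and an edge to a vertex \(w\) labelled \((2,2)\):
\[
G=\langle a,b,t\mid tat^{-1}=a^3,\ a^2=b^2\rangle .
\]
\begin{itemize}
\item Its abelianisation is \(\ab(G)\cong\Z\oplus(\Z/2)^2\), which is virtually cyclic with non-cyclic torsion. The torsion subgroup of \(\ab(BS(m,n))\) is always cyclic, so \(G\) is not a Baumslag--Solitar group.
\item It is not tree-GBS, because its centre is trivial, whereas tree-GBS groups have non-trivial centre. Indeed, the Bass--Serre tree is minimal and not a line, so a central element must be elliptic and fix the whole tree. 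It is then some \(a^k\) commuting with \(t\), so \(a^{3k}=a^k\) and \(k=0\).
\end{itemize}

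There is also a sign slip in your last step. Your rank formula separates \(\Delta\equiv 1\) from \(\Delta\not\equiv 1\), so the second case includes \(\Delta(t)=-1\), that is, the one-loop graphs \(BS(m,-m)\). For \(|m|\geq 2\) these satisfy neither alternative. For example:
\begin{itemize}
\item \(\ab(BS(2,-2))\cong\Z\oplus\Z/4\), which is virtually cyclic.
\item Its centre is trivial by the same argument (a central \(a^{k}\) would need \(a^{k}\in\langle a^2\rangle\) and \(ta^kt^{-1}=a^{-k}=a^k\)), so it is not tree-GBS.
\item It is not isomorphic to any \(BS(m',n')\) with \(|m'|\neq|n'|\). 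The image of the modular homomorphism, an isomorphism invariant of non-elementary GBS groups, is \(\{\pm1\}\) here, but is infinite for such \(BS(m',n')\).
\end{itemize}
So "non-unimodular" in your sense means \(m\neq n\), not \(|m|\neq|n|\).

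What your argument (and the paper's appeal to Levitt) actually establishes is the weaker dichotomy. Either \(G\) is tree-GBS, and then \(\DPAB{G}\) is decidable and EDT0L, or \(G\) admits a labelled graph with \(b_1(\Gamma)=1\) and non-trivial modular homomorphism. The second class is strictly larger than \(\{BS(m,n) : |m|\neq|n|\}\), so the corollary needs to be weakened accordingly.
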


The question for $BS(m,n)$ (case (2)) remains open. Indeed, it is not even know
whether the Diophantine Problem is decidable for any $BS(m,n)$ for some $m,n$
with $|m| \neq |n|$. It was however recently shown \cite{BobAlexinprep} that
every group $BS(m,n)$  with $1 < m < n$ is quasi-isometric to a group with an
undecidable Diophantine Problem. The addition of abelianisation constraints does not make a difference in the
case of \(BS(1, n)\), as here every element of the commutator subgroup is a product of
a bounded number of commutators, and so the commutator subgroups is definable in
the Diophantine Problem.

\section{Undecidability results for free-by-cyclic groups}\label{sec:undecidability}

In this section we exhibit free-by-cyclic groups $G$ for which \(\DPAB{G}\) is undecidable, providing a counterpoint to the classes of groups in Sections \ref{sec:fbyc-with-centre} and \ref{sec:treeprod-GBS}, where the problem is decidable.

The undecidability is connected both to the abelianisation being `larger' than virtually cyclic, and to the centre being trivial. 

\subsection{Free-by-cyclic groups with abelianisations of torsion-free rank at least 3} 

We show that in free-by-cyclic groups with centre, when the rank
of the torsion-free part of the abelianisation is greater than or equal to $3$, the
problems we have considered before become undecidable. A key tool to establish
this, and of independent interest, is Proposition \ref{qt-by-centre}.

%\lc{Braid groups have infinite cyclic centre, so try to apply the proposition below to them to get undecidability results for $\DPAB{G}$. In fact, this holds for all spherical Artin groups and we understand quite a bit about the quotient.}

%\lc{Maybe write down examples for which the converse to Proposition 5.1 is not true.}
%{\color{purple} AL: this is done after the proof for some reason. Happy to move it}
\begin{proposition}
    \label{qt-by-centre}
    Let \(G\) be a finitely generated group. 
    
    If \(\DPAB{G}\) is decidable, then
    \(\DPAB{G / Z(G)}\) is decidable.
\end{proposition}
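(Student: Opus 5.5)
The plan is to reduce \(\DPAB{G/Z(G)}\) to \(\DPAB{G}\) by lifting every instance over \(G/Z(G)\) to an instance over \(G\). We add new variables that are forced to be central; these absorb the ambiguity coming from the kernel \(Z(G)\).

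Fix a finite generating set \(g_1, \dots, g_n\) of \(G\), and let \(\pi \colon G \to G/Z(G)\) be the quotient map. By \cref{ex:central}(2), the centre \(Z(G)\) is \(\DP{G}\)-definable by the system \(\bigwedge_i [C, g_i] = 1\). For each constant \(\bar h \in G/Z(G)\) appearing in an instance, choose a preimage \(h \in G\). Such a preimage is computable if \(\bar h\) is given as a word in the images of the \(g_i\).

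The first step handles the equations. Let \(\mathcal E\) be a system over \(G/Z(G)\). For each equation \(w(\mathbf{x}, \bar h_1, \dots, \bar h_k) = 1\), introduce a fresh variable \(C_w\) and replace the equation by
\[
  w(\mathbf{x}, h_1, \dots, h_k) = C_w \quad\text{together with}\quad \bigwedge_i [C_w, g_i] = 1 .
\]
If \((\bar x_j)\) is a solution over \(G/Z(G)\), then any choice of lifts \(x_j\) gives a solution of the new system: evaluating \(w\) on the lifts yields a central element, and we take that element as the value of \(C_w\). Conversely, applying \(\pi\) to a solution of the new system gives a solution of \(\mathcal E\).

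The second step handles the abelianisation constraints, and it rests on one computation. Write \(\ab\) for \(\ab_G\). The map \(G^{\ab} \to (G/Z(G))^{\ab} = G/(Z(G)[G,G])\) is surjective, and its kernel is exactly \(\ab(Z(G))\). Each equation \(\theta\) of \(\Theta\) is an equation in the abelian group \((G/Z(G))^{\ab}\). Its terms are of the form \(\ab(\bar X_j)\), possibly together with auxiliary abelian variables and constants. Write \(\theta\) as \(u = 0\), where \(u\) is a linear combination of these terms. We replace it by the constraint \(\tilde u - \ab(D_\theta) = 0\) in \(G^{\ab}\), where:
\begin{itemize}
\item \(\tilde u\) is obtained from \(u\) by replacing constants with lifts and by letting auxiliary abelian variables range over \(G^{\ab}\);
\item \(D_\theta\) is a fresh variable of the system constrained to be central.
\end{itemize}
By the kernel computation, an assignment satisfies \(u = 0\) in \((G/Z(G))^{\ab}\) if and only if \(\tilde u\), evaluated on any lifts, lies in \(\ab(Z(G))\). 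This holds if and only if some central \(D_\theta\) satisfies the lifted constraint. Surjectivity of \(G \to G/Z(G)\) and of \(G^{\ab} \to (G/Z(G))^{\ab}\) lets us lift any solution. Projecting a solution down recovers one of the original instance.

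Both steps are effective, so a decision procedure for \(\DPAB{G}\) decides \(\DPAB{G/Z(G)}\). I expect the main obstacle to be bookkeeping rather than mathematics. One has to fix precisely what form the constraint systems \(\Theta\) may take, in particular whether they may contain extra variables, and then check that each form lifts. A second point is that constants must be supplied in a computable way, for example as words over a fixed generating set, so that the preimages \(h\) can be chosen effectively. The essential fact throughout is that \(\ker(G^{\ab} \to (G/Z(G))^{\ab}) = \ab(Z(G))\), which holds by the third isomorphism theorem.
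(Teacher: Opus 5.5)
Your proposal is correct and follows essentially the same route as the paper. The paper also lifts each (triangular) equation \(XYZ=1\) to \(\bar X\bar Y\bar Z = U\) with \(U\) constrained to lie in the definable set \(Z(G)\), and handles the constraints via the fact that \(q\in[Q,Q]\) if and only if a lift lies in \([G,G]Z(G)\); this is exactly your observation \(\ker\bigl(G^{\ab}\to (G/Z(G))^{\ab}\bigr)=\ab(Z(G))\), realised through your central variable \(D_\theta\). The paper only treats constraints of the form \(X\in[Q,Q]\), whereas you handle general linear constraint systems directly, but this is bookkeeping rather than a different argument.
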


\begin{proof}
    Let \(Q = G / Z(G)\). Fix a lift \(s \colon Q \to G\) that satisfies $s(1_Q)=1_G$. Then every element in
    \(G\) can be expressed in the form \(s(q) z\) for some \(q \in Q\) and \(z
    \in Z(G)\). Moreover, if \(s(q)z = s(q')z'\), then \(s(q) (s(q'))^{-1} \in
    Z(G)\), and so \(s(q) = s(q') g\) for some \(g \in Z(G)\). It follows that \(q = q'\) and \(z = z'\),
    and so this representation is unique. 
    
    Take \(X Y Z  =1\) to be an equation in \(Q\), where \(X\), \(Y\) and \(Z\) are variables or
    constants. Let \(\bar{X}\) just denote a variable in \(G\) if \(X\) is a
    variable, and denote \(s(X)\) if \(X\) is a constant. Define \(\bar{Y}\) and
    \(\bar{Z}\) analogously. Then \(XY Z = 1\) has a solution in \(Q\) if and
    only if the system with constraints \((\bar{X} \bar{Y} \bar{Z}) = U
    \wedge (U\in Z(G))\) has a solution. Moreover, the solutions to this new
    system are exactly the solutions to \(XYZ=1\) in \(Q\) lifted by \(s\) and
    then multiplied by some element of \(Z(G)\). Note that \(Z(G)\) is
    \(\DP{G}\)-definable (see Example \ref{ex:central}(2)). Thus this system with constraints \((\bar{X} \bar{Y}
    \bar{Z} = U) \wedge (U\in Z(G))\) is equivalent to a (unconstrained) system
    in \(G\). 

    At this point, we have shown that the decidability of \(\DPAB{G}\) implies
    the decidability of \(\DP{Q}\), so it remains to consider the
    abelianisation constraints in \(Q\). Let \(q \in Q\). We claim that
    \(q \in [Q, Q]\) if and only if \(s(q) \in [G, G]Z(G)\). First, by applying the
    projection homomorphism from \(G\) to \(Q\), \(s(q) \in [G, G] Z(G)\) implies
    \(q \in [Q, Q]\). Second, suppose that \(q \in [Q, Q]\). Then \(q = c_1 \cdots c_m\),
    for commutators \(c_1, \ldots, c_m \in Q\). Thus \(s(q) = s(c_1) \cdots s(c_m) z\),
    for some \(z \in Z(G)\), and so \(s(q) \in [G, G] Z(G)\), as required. As a
    product of two \(\DPAB{G}\)-definable sets, \([G, G] Z(G)\) is also
    \(\DPAB{G}\)-definable. We can thus replace any abelianisation constraint
    \(X \in [Q, Q]\), where \(X\) is a variable with \(\bar{X} \in [G, G] Z(G)\) to
    complete our encoding of \(\DPAB{Q}\) into \(\DPAB{G}\). 
\end{proof}

\begin{rmk}
     Note that the reverse direction in Proposition~\ref{qt-by-centre} does not hold: the $3 \times 3$ Heisenberg group over the integers is a counterexample as a central extension
of $\Z^2$ \cite{DuchinLiangShapiro}. In fact, there is a finitely generated class \(2\) nilpotent
group (and thus a central extension of a finitely generated abelian group) with undecidable
single equation problem \cite{RomankovSEP}, and thus even in case of single equations the converse to the above fails to hold.
\end{rmk}
\begin{theorem_no_number}[\ref{fbyc-big-ab}]
    Let \(G\) be a [f.g. free]-by-cyclic group with non-trivial centre and a torsion-free
    rank \(3\) or greater abelianisation. Then \(\DPAB{G}\) is undecidable.
\end{theorem_no_number}

\begin{proof}
    %Write \(G = F \rtimes \langle t \rangle\), where \(\langle t \rangle \cong\Z\). 
    By
    \cite{BridsonPiwek}*{Corollary 3.7} the group \(G / Z(G)\) is
    virtually free. As the rank of the abelianisation of \(G\) is \(3\) or more,
    and \(Z(G)\) is cyclic (\cite{BridsonPiwek}*{Lemma 3.4})
    the rank of the abelianisation of \(G / Z(G)\) is \(2\) or more. We can
    therefore apply \cite{CiobanuGarreta}*{Theorem B} to conclude that \(\DPAB{G
    / Z(G)}\) is undecidable, and so \cref{qt-by-centre} tells us that
    \(\DPAB{G}\) is undecidable.
\end{proof}

We now consider various cases without centre, starting with the first hydra group.

\subsection{The first hydra group}
    The object of study in this section is a free-by-cyclic group with
    abelianisation equal to $\Z^2$ and trivial centre, more commonly given by
    its one-relator presentation \[K = \langle c, q \mid c(qcq^{-1}) =
    (qcq^{-1})c \rangle.\] We will show that \(\DPAB{K}\) is undecidable by
    using the isomorphism between $K$ and $G$ given in Lemma \ref{lem:H-K-iso}
    and working with a free-by-cyclic presentation $F \rtimes_{\phi} \Z$ for
    this group, where $\phi$ sends $a$ to $ab$ and $b$ to $b$. Note that
    $\phi$ has infinite order in $Out(F)$, so by \cite{BridsonPiwek}*{Lemma
    3.4} the centre of $K$ is trivial. We note that it is unknown
    whether $\DP{K}$ is decidable. 
    
    This is the
    `first' hydra group defined by Dison and Riley \cite{HydraGroups}, and
    enjoys various nice properties such as \(\operatorname{CAT}(0)\) and
    biautomatic. We show that this group has an undecidable Diophantine Problem
    with abelianisation constraints. We expect the fundamental idea of
    our proof will work for all hydra groups, only our method for computing
    centralisers does not seem to, so some alternative approach for computing
    these centralisers will be needed.

    We will mostly be using the free-by-cyclic description of \(K\) given below as $G$.
    However, in order to compute certain centralisers, it is easier to view
    \(K\) as the semidirect product of the right-angled Artin group on a
    bi-infinite line with \(\Z\), so we give the isomorphism between the two
    standard presentations for \(K\).
  \begin{lem}[{\cite{BobAlexinprep}*{Lemma 5.4}}]
    \label{lem:H-K-iso}
    There is an isomorphism \(\Theta\) from $$G= \langle a, b, t \mid tat^{-1} = ab, tbt^{-1} = b\rangle$$ to \(K = \langle c, q \mid c(qcq^{-1}) = (qcq^{-1} c) \rangle\) defined by $a \mapsto cq^{-1}$, $b \mapsto qcq^{-1} c^{-1}$, and $t\mapsto c.$ 
    %\((a)\Theta = cq^{-1}, \quad (b)\Theta = qcq^{-1} c^{-1}, \quad (t)\Theta = c.\)
    
    In addition, let \(c_i = q^i c q^{-i}\), where \(i \in \mathbb{Z}\). Then
    \(A(\Gamma) = \langle \{c_i \mid i \in \mathbb{Z}\} \rangle\) is the
    right-angled Artin group on an infinite line, and \[K \cong A(\Gamma) \rtimes_\phi \Z,\] where
    \(\phi \in \Aut(A(\Gamma))\) is defined by \(c_i \mapsto c_{i + 1}\).
  \end{lem}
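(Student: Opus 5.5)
The plan is to verify the isomorphism \(\Theta\) directly by checking relations and writing down an explicit inverse. The second assertion then follows from a Reidemeister--Schreier computation for the kernel of a map \(K \to \Z\).

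\textbf{Step 1: \(\Theta\) is a homomorphism.} I check the two relations of \(G\), using the relation \(cc_1 = c_1c\) of \(K\), where \(c_1 = qcq^{-1}\).
\begin{itemize}
\item For \(tat^{-1} = ab\): we have \(\Theta(tat^{-1}) = c\,cq^{-1}\,c^{-1}\) and \(\Theta(ab) = cq^{-1}\,qcq^{-1}c^{-1} = c^2q^{-1}c^{-1}\). These agree in the free group already.
\item For \(tbt^{-1} = b\): we need \(c\,(c_1c^{-1})\,c^{-1} = c_1c^{-1}\). This is equivalent to \(cc_1 = c_1c\), which is exactly the relator of \(K\).
\end{itemize}

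\textbf{Step 2: an inverse homomorphism.} Define \(\Lambda \colon K \to G\) by \(c \mapsto t\) and \(q \mapsto a^{-1}t\).
\begin{itemize}
\item Well-definedness: \(\Lambda(qcq^{-1}) = a^{-1}t\,t\,t^{-1}a = a^{-1}ta\). From \(tat^{-1} = ab\) we get \(b = a^{-1}tat^{-1}\), so \(a^{-1}ta = bt\). Since \(t\) commutes with \(b\), it commutes with \(bt\). Hence \(\Lambda\) respects the relator of \(K\).
\item Composites on generators:
  \begin{itemize}
  \item \(\Lambda\Theta(t) = t\).
  \item \(\Lambda\Theta(a) = \Lambda(cq^{-1}) = t\,t^{-1}a = a\).
  \item \(\Lambda\Theta(b) = \Lambda(c_1c^{-1}) = bt\,t^{-1} = b\).
  \item \(\Theta\Lambda(c) = c\).
  \item \(\Theta\Lambda(q) = \Theta(a^{-1}t) = qc^{-1}c = q\).
  \end{itemize}
\end{itemize}
So \(\Theta\) and \(\Lambda\) are mutually inverse, and \(\Theta\) is an isomorphism.

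\textbf{Step 3: the semidirect product decomposition.} Let \(\epsilon \colon K \to \Z\) be given by \(c \mapsto 0\) and \(q \mapsto 1\). This is well defined because the relator has zero exponent sum in \(q\).
\begin{itemize}
\item Apply Reidemeister--Schreier to \(\ker\epsilon\) with Schreier transversal \(\{q^i \mid i \in \Z\}\). The Schreier generators are \(q^icq^{-i} = c_i\); those coming from \(q\) are trivial.
\item The relators are the rewritten conjugates \(q^i(cc_1c^{-1}c_1^{-1})q^{-i} = c_ic_{i+1}c_i^{-1}c_{i+1}^{-1}\) for \(i \in \Z\).
\item Thus \(\ker\epsilon = \langle c_i \ (i \in \Z) \mid [c_i, c_{i+1}] \ (i \in \Z)\rangle\). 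This is precisely the right-angled Artin group \(A(\Gamma)\) on the bi-infinite line, and it coincides with the subgroup \(\langle c_i \mid i \in \Z\rangle\) of \(K\).
\item Since \(\epsilon\) splits via \(1 \mapsto q\), we get \(K \cong A(\Gamma) \rtimes \langle q \rangle\). Conjugation by \(q\) sends \(c_i\) to \(c_{i+1}\), giving \(\phi\).
\end{itemize}

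The only point needing care is the bookkeeping in Step 3, namely confirming that the Schreier rewriting produces exactly the commutators \([c_i, c_{i+1}]\) and no other relations. This is standard, as in the Magnus--Moldavanskii treatment of one-relator groups. The identity \(a^{-1}ta = bt\) in Step 2 is the key trick; everything else is routine free-group computation.
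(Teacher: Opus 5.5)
Your proof is correct and complete. The relation checks for \(\Theta\) hold as written, and so does the explicit inverse \(\Lambda\colon c\mapsto t,\ q\mapsto a^{-1}t\), with the key identity \(a^{-1}ta=bt\) giving \(\Lambda(c_1)=bt\). The Reidemeister--Schreier computation is also right: it yields \(\ker(K\to\Z)=\langle c_i \mid [c_i,c_{i+1}]\ (i\in\Z)\rangle\), generated as a subgroup by the \(c_i\) and split by \(\langle q\rangle\). Conjugation by \(q\) is \(c_i\mapsto c_{i+1}\).

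The paper gives no proof of this lemma; it imports it from \cite{BobAlexinprep}*{Lemma 5.4}. So your argument is a self-contained verification rather than an alternative route. It is consistent with how the paper uses the lemma later. Your \(\Lambda\) is exactly the map the paper uses to pass from \(K\) back to \(G\), both in the proof of \cref{ab-cents-Bob-gp} and in \cref{lem:Bob-gp-cent}\eqref{item:L-cent-r}, where \(c_{-1},c_0,c_1\) are sent to \(t^{-1}ata^{-1}t,\ t,\ bt\). Your identification of \(\phi\) as conjugation by \(q\) also matches the paper's later convention \(q^{-1}uq=(u)\phi^{-1}\).
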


    Our proof of undecidability requires various sets to be \(\DPAB{G}\)-definable.
    Centralisers provide a useful source for these, since they are
    always \(\DPAB{G}\)-definable (they are, in fact \(\DP{G}\)-definable). We prove
    this using the normal form for semidirect products: either that for the
    \(F_2\)-by-\(\Z\) presentation or that for the \(A(\Gamma)\)-by-\(\Z\)
    presentation.
  \begin{lem} 
    Let \(G = \langle a, b, t \mid tat^{-1} = ab, tbt^{-1} = b\rangle\) and \(K
    = \langle c, q \mid c(qcq^{-1}) = (qcq^{-1})c \rangle\), and let \(c_i\) be as
    defined in \cref{lem:H-K-iso}. Then
    \label{lem:Bob-gp-cent}
    \begin{multicols}{2}
    \begin{enumerate}
       \item \(C_G(a) = \langle a \rangle\);
       \label{item:L-cent-a}
       \item \(C_G(b) = \langle b, t \rangle\);
       \label{item:L-cent-b}
       \item \(C_G(aba^{-1}) = \langle aba^{-1}, t \rangle\);
       \label{item:L-cent-abA}
       \item \(C_K(c_i) = \langle c_{i - 1}, c_i, c_{i + 1} \rangle\);
       \label{item:H-cent-ci}
       \item \(C_G(t) = \langle aba^{-1}, b, t \rangle\).
       \label{item:L-cent-r}
    \end{enumerate}
    \end{multicols}
  \end{lem}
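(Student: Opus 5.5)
We compute each centraliser by reducing to a centraliser computation inside the normal subgroup of the semidirect product. For an element \(g\) of the normal subgroup, an element of the form \(w t^k\) (or \(w q^k\)) centralises \(g\) if and only if \(w\,\phi^k(g)\,w^{-1} = g\). The key steps, in order, are: (a) show \(k = 0\), using the abelianisation of the normal subgroup; (b) compute an honest centraliser in the normal subgroup, namely \(F_2\) or the RAAG \(A(\Gamma)\); (c) for (2), (3) and (5), transport the answer back to \(G\) through the isomorphism \(\Theta\) of \cref{lem:H-K-iso}.

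For \eqref{item:L-cent-a} we work directly in \(G = F(a,b) \rtimes \langle t \rangle\). Write \(x = w t^k\) with \(w \in F(a,b)\). Since \(\phi^k(a) = ab^k\), the element \(x\) commutes with \(a\) if and only if \(w\,ab^k\,w^{-1} = a\) in \(F_2\). Conjugation preserves the abelianisation \(\Z^2\), so this forces \(k = 0\). Then \(w \in C_{F_2}(a) = \langle a \rangle\), because \(a\) is primitive and hence not a proper power. This gives \(C_G(a) = \langle a \rangle\).

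For the remaining items we pass to \(K = A(\Gamma) \rtimes \langle q \rangle\). First we record the images under \(\Theta\):
\[
\Theta(t) = c_0, \qquad \Theta(b) = c_1 c_0^{-1}, \qquad \Theta(aba^{-1}) = c_0 c_{-1}^{-1}.
\]
The last identity is a short computation using \(q^{-1} c^{-1} q = c_{-1}^{-1}\) and the fact that \(c_0\) and \(c_{-1}\) commute.

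Now let \(g\) be any one of \(c_i\), \(c_1 c_0^{-1}\) or \(c_0 c_{-1}^{-1}\), and suppose \(x = w q^k\) centralises \(g\). Then \(w\,\phi^k(g)\,w^{-1} = g\) in \(A(\Gamma)\). The abelianisation of \(A(\Gamma)\) is free abelian on \(\{c_i\}\), and \(\phi^k\) shifts indices by \(k\), so \(\ab(\phi^k(g)) = \ab(g)\) forces \(k = 0\). Hence \(C_K(g) = C_{A(\Gamma)}(g)\).

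To compute \(C_{A(\Gamma)}(g)\) we invoke Servatius' centraliser theorem for RAAGs; since \(g\) involves only finitely many generators, we may first restrict to a finite subline. In each case \(g\) is cyclically reduced.
\begin{itemize}
\item For \(g = c_i\), the centraliser is generated by the star of \(c_i\), giving \(\langle c_{i-1}, c_i, c_{i+1} \rangle\). This is \eqref{item:H-cent-ci}.
\item For \(g = c_1 c_0^{-1}\), the support \(\{c_0, c_1\}\) is a clique. The element \(g\) is not a proper power in \(\langle c_0, c_1 \rangle \cong \Z^2\). The common link of \(c_0\) and \(c_1\) on the line is empty. So the centraliser is \(\langle c_0, c_1 \rangle\), which pulls back under \(\Theta\) to \(\langle t, b \rangle\). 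This is \eqref{item:L-cent-b}.
\item For \(g = c_0 c_{-1}^{-1}\), the same argument gives \(\langle c_{-1}, c_0 \rangle\), which pulls back to \(\langle t, aba^{-1} \rangle\). This is \eqref{item:L-cent-abA}.
\item Finally, \eqref{item:L-cent-r} follows from \eqref{item:H-cent-ci} with \(i = 0\), since \(c_{-1}, c_0, c_1\) are the \(\Theta\)-images of generators of \(\langle aba^{-1}, b, t \rangle\).
\end{itemize}

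The main point requiring care is the RAAG centraliser step. Servatius' theorem must be applied correctly: the element has to be cyclically reduced, and the clique decomposition of its support and the link must be identified correctly. We must also check that passing to a finite subline of \(\Gamma\) is harmless. If citing Servatius is undesirable, a fallback is to use the amalgam decomposition \(\langle c_{\le 0} \rangle *_{\langle c_0 \rangle} \langle c_{\ge 0} \rangle\) of \(A(\Gamma)\) and compute the centralisers via Bass--Serre theory.
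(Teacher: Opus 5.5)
Your proof is correct. For items (1), (4) and (5) it coincides with the paper's argument: the abelianisation forces \(k=0\), followed by the free-group or Servatius centraliser, and (5) is pulled back from \(C_K(c_0)\). The genuine difference is in (2) and (3). The paper never leaves \(G = F(a,b)\rtimes\langle t\rangle\): it observes that \(\phi\) fixes \(b\) and \(aba^{-1}\), so \(t\) commutes with both. Hence \(ut^k\) centralises \(g\in\{b,aba^{-1}\}\) if and only if \(u\in C_{F(a,b)}(g)=\langle g\rangle\). That needs no ``\(k=0\)'' step and no RAAG theory at all. You instead transport \(b\mapsto c_1c_0^{-1}\) and \(aba^{-1}\mapsto c_0c_{-1}^{-1}\) into \(K\). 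There the stable letter \(q\) no longer fixes these elements, so you need the shift-of-abelianisation argument to force \(k=0\). You then need Servatius' theorem for a non-generator element, together with the finite-subline reduction. Your route is uniform, since everything reduces to one RAAG computation. It also gives a clean picture of \(b\), \(aba^{-1}\), \(t\) as \(c_1c_0^{-1}\), \(c_0c_{-1}^{-1}\), \(c_0\), which makes (5) visibly compatible with (2) and (3). The paper's route is shorter and more elementary for those two items.

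Two small precision points. In applying Servatius to \(c_1c_0^{-1}\), whether \(g\) is a proper power in \(\langle c_0,c_1\rangle\cong\Z^2\) is irrelevant. For a clique support the pure factors are the single generators \(c_1\) and \(c_0\), so the centraliser is \(\langle c_1\rangle\times\langle c_0\rangle\) times the (empty) link regardless. In (5), \(c_{-1}\) is not literally the \(\Theta\)-image of one of \(aba^{-1},b,t\). What you mean is \(\langle c_{-1},c_0,c_1\rangle=\langle c_0c_{-1}^{-1},c_0,c_1c_0^{-1}\rangle=\Theta(\langle aba^{-1},t,b\rangle)\), which follows immediately from the images you recorded.
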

  
  \begin{proof}
    \eqref{item:L-cent-a}: Let \(ut^k \in G\), where $u$ is a word over $\{a^{\pm 1}, b^{\pm 1}\}$. Then \(u t^k \in C_G(a)\)
    if and only if \(aut^k =_G ut^k a =_G uab^k t^k\), which holds if and only if
    \(au =_{F(a, b)} uab^k\). This can only happen if \(k = 0\), as seen by taking the abelianisation on both sides, and so \(au =_{F(a, b)} ua\), which implies \(u\) is a power of \(a\). It follows that
    \(C_G(a) = \langle a \rangle\).

    \eqref{item:L-cent-b}: First note that \(b, t \in C_G(b)\), and so
    \(\langle b, t \rangle \subseteq C_G(b)\). Now let \(ut^k \in C_G(b)\).
    Then \(but^k = ut^k b = ubt^k\), and so \(ub = bu\); that is, \(u \in
    C_{F(a, b)}(b) = \langle b \rangle\). Thus \(ut^k = b^l t^k \in \langle b, t
    \rangle\), and so \(C_G(b) = \langle b,t 
    \rangle\).

    \eqref{item:L-cent-abA}: We have that
    \(taba^{-1} = ab \cdot b \cdot b^{-1} a^{-1} t = aba^{-1}t\), and so
    \(\langle aba^{-1}, t \rangle \subseteq C_G(aba^{-1})\). Let \(ut^k \in
    C_G(aba^{-1})\). Then \(aba^{-1} \cdot ut^k =_G ut^k \cdot aba^{-1} =_G u aba^{-1} t^k\).
    Thus \(uaba^{-1} =_{F(a, b)} aba^{-1}u\), and so \(u \in C_{F(a, b)}(aba^{-1}) =
    \langle aba^{-1} \rangle\); then \(ut^k \in \langle
    aba^{-1}, t\rangle\), and \(C_G(aba^{-1}) = \langle aba^{-1},t 
    \rangle\).

    \eqref{item:H-cent-ci}: From the presentation of
    \(A(\Gamma)\), we have that \(\langle c_{i-1}, c_{i}, c_{i + 1} \rangle
    \subseteq C_K(c_i)\). Now let \(u q^k \in C_K(c_i)\), where \(u \in
    A(\Gamma)\) and \(k \in \mathbb{Z}\). Then \(c_i uq^k =_K uq^k c_i =_K uc_{k
    + i} q^k\), and so \(c_i u =_{A(\Gamma)} u c_{k + i}\); that is \(c_i\) is
    conjugate in \(A(\Gamma)\) to \(c_{k + i}\). It follows that \(c_i\) and
    \(c_{k + i}\) are conjugate in the abelianisation of \(A(\Gamma)\); that is,
    they are equal in the abelianisation. But generators in \(A(\Gamma)\) are
    mapped to generators in the infinite rank free group, and so the images of
    \(c_i\) and \(c_{i + k}\) cannot be equal in the abelianisation unless \(c_i
    = c_{i + k}\). Thus \(k = 0\), and so \(C_K(c_i) \subseteq A(\Gamma)\).
    Since the \(C_{A(\Gamma)}(c_i) = \langle c_{i - 1}, c_i, c_{i + 1} \rangle\)
    (\cite{Servatius89}*{The Centralizer Theorem}, see also
		\cite{DayWade19}*{Proposition 2.1}), the result follows.

        \eqref{item:L-cent-r}: We compute \(C_G(t)\) by considering its image in
    the (isomorphic) group \(K\), using the isomorphism from \cref{lem:H-K-iso}.
    So we instead need to compute \(C_K(c)\). From \eqref{item:H-cent-ci},
    \(C_K(c) = C_K(c_0) = \langle c_{-1}, c_0, c_1 \rangle = \langle
    q^{-1}cq, c, qcq^{-1} \rangle\). The preimage of this subgroup under the
    isomorphism from \cref{lem:H-K-iso} is \(\langle t^{-1} a t a^{-1} t,
    t, bt \rangle = \langle a b^{-1} a^{-1}t, t, bt \rangle =
    \langle aba^{-1}, b, t \rangle\).
\end{proof}

Using the centralisers we computed, we now have that the following sets
are definable using the Diophantine Problem in \(G\).

\begin{lem}
    \label{Bob-gp-defable}
    Let \(G = \langle a, b, t \mid tat^{-1} = ab, tbt^{-1} = b\rangle\). Then
    the following subsets of \(G\) are \(\DP{G}\)-definable:
    \begin{multicols}{2}
    \begin{enumerate}
      \item \(\langle a \rangle\);
      \label{item:a-definable}
      \item \(\langle b, t \rangle\);
      \label{item:b-r-definable}
      \item \(\langle aba^{-1}, t \rangle\);
      \label{item:abA-r-definable}
      \item \(\langle t \rangle\).
      \label{item:r-definable}
    \end{enumerate}
    \end{multicols}
  \end{lem}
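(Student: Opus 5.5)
The plan is to realise each of the four sets as a centraliser or an intersection of centralisers, and then use the computations of \cref{lem:Bob-gp-cent}. Centralisers are \(\DP{G}\)-definable by Example~\ref{ex:central}(1), since \(x \in C_G(g)\) exactly when \(Xg = gX\). Intersections of definable sets are definable, because we may take the union of the two defining systems on the same distinguished variable.

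For \eqref{item:a-definable}, \cref{lem:Bob-gp-cent}\eqref{item:L-cent-a} gives \(\langle a \rangle = C_G(a)\), so the single equation \(Xa = aX\) defines it. For \eqref{item:b-r-definable}, \cref{lem:Bob-gp-cent}\eqref{item:L-cent-b} gives \(\langle b, t\rangle = C_G(b)\), defined by \(Xb = bX\). Similarly, for \eqref{item:abA-r-definable}, \cref{lem:Bob-gp-cent}\eqref{item:L-cent-abA} gives \(\langle aba^{-1}, t\rangle = C_G(aba^{-1})\), defined by \(X aba^{-1} = aba^{-1} X\) with constant \(aba^{-1}\).

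For \eqref{item:r-definable}, I will show \(\langle t \rangle = \langle b, t\rangle \cap \langle aba^{-1}, t\rangle\). Then \(\langle t \rangle\) is defined by the system \((Xb = bX) \wedge (Xaba^{-1} = aba^{-1}X)\). The inclusion \(\supseteq\) is clear.

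For \(\subseteq\), note that \(t\) commutes with \(b\) and with \(aba^{-1}\) (as in the proof of \cref{lem:Bob-gp-cent}). Hence every element of \(\langle b,t\rangle\) has the form \(b^m t^k\), and every element of \(\langle aba^{-1},t\rangle\) has the form \(ab^na^{-1}t^{k'}\). By uniqueness of the normal form \(ut^k\) with \(u \in F(a,b)\) in the semidirect product, equality \(b^m t^k = ab^na^{-1}t^{k'}\) forces \(k = k'\) and \(b^m = ab^na^{-1}\) in \(F(a,b)\). Reduced words in the free group then force \(m = n = 0\). So the intersection is \(\langle t \rangle\).

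This last step is the only non-immediate one. The key point is that both subgroups are abelian of the stated form, which relies on the commutation relations already verified.
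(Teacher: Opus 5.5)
Your proposal is correct and follows the same route as the paper. Parts (1)--(3) are read off as centralisers from \cref{lem:Bob-gp-cent}, and $\langle t\rangle$ is obtained as the intersection $\langle b,t\rangle\cap\langle aba^{-1},t\rangle$ of two definable sets. Your normal-form check of that intersection, comparing $b^m t^k$ with $ab^na^{-1}t^{k'}$ in $F(a,b)\rtimes\langle t\rangle$, fills in a detail that the paper simply asserts.
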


  \begin{proof}
    As centralisers are \(\DP{G}\)-definable,
    \eqref{item:a-definable},
    \eqref{item:b-r-definable} and \eqref{item:abA-r-definable} analogously
    follow from \cref{lem:Bob-gp-cent}\eqref{item:L-cent-a}, \eqref{item:L-cent-b} and
    \eqref{item:L-cent-abA}.
    For \eqref{item:r-definable}, note that \(\langle t \rangle =
    \langle aba^{-1}, t \rangle \cap \langle b, t \rangle\). Since intersections
    of \(\DP{G}\)-definable sets are \(\DP{G}\)-definable 
    and both of these sets are \(\DP{G}\)-definable (\eqref{item:b-r-definable} and
    \eqref{item:abA-r-definable}), the result follows.
%
%    \eqref{item:b-definable}: Using \cref{lem:definable-closure-props}, it
%    suffices to show that \(a \langle b \rangle\) is equationally definable.
%    Since \(\langle r \rangle\) is definable by \eqref{item:r-definable}, let
%    \(\mathcal E_r\) be a system of equations with a variable \(X_r\) such that
%    the set of solutions to \(X_r\) is \(\langle r \rangle\). Then the set of
%    solutions to the variable \(Y\) within the system of equations \(\mathcal
%    E_r \cup \{Y = X_r a X_r^{-1}\}\) is \(\{ab^k \mid k \in \mathbb{Z}\} = a
%    \langle b \rangle\), as required.
\end{proof}
%
%\begin{lem}
%    \label{Bob-gp-defable}
%    The following sets are DP-definable in \(G = \langle a, b, t \mid tat^{-1}
%    = ab, tbt^{-1} = b \rangle\):
%    \begin{enumerate}
%        \item \(\langle a \rangle\);
%        \item \(\langle b \rangle\);
%        \item \(\langle t \rangle\).
%    \end{enumerate}
%\end{lem}
%
%\begin{proof}
%    {\color{red}
%    Done by Bob and Alex in some other document.} 
%\end{proof}

The next centraliser we use is not powerful enough on its own to give undecidability
in any obvious way. However, when we take its abelianisation, it does encode a divisibility
condition in the ring of integers, that will be important to reduce Hilbert's
10th problem in the ring of integers to \(\DP{G}\). We do not explicitly compute this
centraliser; just its abelianisation.

\begin{lem}
    \label{ab-cents-Bob-gp}
    Let \(G = \langle a, b, t \mid tat^{-1} = ab, tbt^{-1} = b \rangle\). Fix \(k\in \Z\). Then
     \[
        \ab(C_G(a t^k)) = \{a^\ell t^{k \ell} \mid \ell \in \Z\}.
    \]
\end{lem}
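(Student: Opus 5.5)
The plan is to compute directly with the normal form of the semidirect product \(G = F(a,b) \rtimes_\phi \langle t \rangle\), where \(\phi(a) = ab\) and \(\phi(b) = b\). Every element of \(G\) can be written uniquely as \(ut^m\) with \(u \in F(a,b)\) and \(m \in \Z\), and \(t^j v t^{-j} = \phi^j(v)\), with \(\phi^j(a) = ab^j\) and \(\phi^j(b) = b\).

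First I will identify \(\ab(G)\). The relation \(tat^{-1} = ab\) forces \(\ab(b) = 0\), so \(\ab(G) \cong \Z^2\) is free abelian on the images of \(a\) and \(t\). For \(u \in F(a,b)\) I write \(e_a, e_b\) for its exponent sums in \(a\) and \(b\). Then \(\ab(ut^m) = a^{e_a} t^m\), written multiplicatively as in the statement.

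The inclusion \(\supseteq\) is immediate. Each power \((at^k)^\ell\) lies in \(C_G(at^k)\), and its image in \(\ab(G)\) is \(a^\ell t^{k\ell}\).

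For \(\subseteq\), let \(g = ut^m \in C_G(at^k)\).
\begin{itemize}
\item Moving all powers of \(t\) to the right, the left side is \(at^k\cdot ut^m = a\,\phi^k(u)\,t^{k+m}\), and the right side is \(ut^m\cdot at^k = u\,\phi^m(a)\,t^{m+k} = u\,ab^m\,t^{m+k}\).
\item By uniqueness of normal forms, commutation is equivalent to \(a\,\phi^k(u) = u\,a\,b^m\) in \(F(a,b)\).
\item Now abelianise this equation inside \(F(a,b)\), that is, map it to \(\Z^2\) with basis \(a,b\). Since \(\phi^k\) sends \(a \mapsto ab^k\) and fixes \(b\), the image of \(\phi^k(u)\) is \((e_a,\, e_b + k e_a)\).
\item Comparing the \(b\)-coordinates of the two sides gives \(e_b + k e_a = e_b + m\), so \(m = k e_a\).
\end{itemize}
Hence \(\ab(g) = a^{e_a} t^{k e_a}\), which is of the required form with \(\ell = e_a\).

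I do not expect a genuine obstacle. The only points needing care are the direction of the conjugation convention, namely that \(t v t^{-1} = \phi(v)\) so that \(t^m a = \phi^m(a)\, t^m\), and the observation that the useful information comes from abelianising in the free group \(F(a,b)\), where \(b\) survives, rather than in \(G\), where \(b\) dies. This is exactly why the lemma only determines \(\ab(C_G(at^k))\) and not the centraliser itself.
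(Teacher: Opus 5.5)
Your proof is correct, but it takes a different and more direct route than the paper.

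\textbf{The paper's route.} It transports the problem through the isomorphism \(G\cong K\cong A(\Gamma)\rtimes\Z\) of \cref{lem:H-K-iso}. It writes an element of \(C_K(c_{-1}^k c_0 q^{-1})\) as \(uq^\ell\) with \(u\in A(\Gamma)\). It then derives a shift recurrence for the \(c_i\)-exponent sums of \(u\) and solves it using the fact that only finitely many are non-zero. Finally it translates \(\ab(uq^\ell)\) back via \(q\mapsto a^{-1}t\), \(c\mapsto t\) to obtain \(a^{-\ell}t^{-k\ell}\).

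\textbf{Your route.} You stay in the \(F(a,b)\rtimes\Z\) normal form. You observe that one coordinate of \(\ab(F(a,b))\cong\Z^2\) already forces \(m=k\,e_a\): the \(b\)-exponent sum, which dies in \(\ab(G)\) but survives in the fibre.

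In effect the two proofs use different fibrations of the same group. Yours has the finitely generated fibre \(F(a,b)=\ker(\expsum_t)\). The paper's has the infinitely generated fibre \(A(\Gamma)\), which corresponds to \(\ker(\expsum_a)\) in \(G\).

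\textbf{What each buys.} Your argument is shorter, needs no isomorphism, and is uniform in \(k\) and \(m\). The paper's recurrence, as written, implicitly assumes the indices \(-1,0,\ell-1,\ell\) are distinct and \(\ell>0\), so small or negative \(\ell\) need a separate (easy) check. On the other hand, the paper's computation determines every \(c_i\)-exponent sum of \(u\), which is more than the lemma needs. It also fits the right-angled Artin group viewpoint the paper already uses for the centralisers in \cref{lem:Bob-gp-cent}.
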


\begin{proof}
    We use the isomorphism and notation of \cref{lem:H-K-iso}. 
    % that \(G \cong K \coloneqq \langle c, q \mid c(qcq^{-1}) = (qcq^{-1})c \rangle\), via the isomorphism defined by \(a \mapsto cq^{-1}\), \(b \mapsto qcq^{-1} c^{-1}\) and \(t \mapsto c\). \cref{lem:H-K-iso} further tells us that \(K\) is isomorphic to \(A(\Gamma) \rtimes_\phi \Z\), where \(\Gamma\) is the bi-infinite line graph (that is, the Cayley graph of \(\Z\)), \(A(\Gamma)\) is the right-angled Artin group on \(\Gamma\) and \(\phi\) is the action of \(\Z\) on \(A(\Gamma)\) induced by shifting the vertices of \(\Gamma\) along by \(1\). We will use \(c_i = q^i c q^{-i}\) to denote the generators of \(A(\Gamma)\). 
    We have that
   \(C_G(at^k) = C_K(c_0 q^{-1} c_0^k) = C_K(c_{-1}^k c_0 q^{-1})\). Let \(u q^\ell \in C_K(c_{-1}^k
   c_0 q^{-1})\),
   where \(u \in A(\Gamma)\) and \(\ell \in \Z\). Then we use $u q^\ell \cdot c_{-1}^k c_0 q^{-1}= c_{-1}^k c_0 q^{-1} \cdot u q^\ell$ and rewrite as
   \begin{align*}
        u c_{\ell - 1}^k c_{\ell} q^{\ell - 1}
       & = u q^\ell \cdot c_{-1}^k c_0 q^{-1} \\
       & = c_{-1}^k c_0 q^{-1} \cdot u q^\ell \\
       & = c_{-1}^k c_0 \cdot (u) \phi^{-1} \cdot q^{\ell - 1}.
   \end{align*}
   Thus \(c_{-1}^k c_0 \cdot (u) \phi^{-1} = u c_{\ell - 1}^k c_{\ell} \) and so
   \((u)\phi^{-1} = c_0^{-1} c_{-1}^{-k} u c_{\ell - 1}^k c_{\ell}\). 

    Abelianising, noting that exponent-sums of the \(c_i\)s are well-defined inside the infinite
    right-angled Artin group, gives:
    \begin{enumerate}
        \item \(\expsum_{c_i}((u)\phi^{-1}) = \expsum_{c_i}(u)\), for all \(i \notin \{-1, 0, \ell - 1, \ell\}\);
        \item \(\expsum_{c_{-1}}((u)\phi^{-1}) = \expsum_{c_{-1}}(u) - k\);
        \item \(\expsum_{c_0}((u)\phi^{-1}) = \expsum_{c_0}(u) - 1\);
        \item \(\expsum_{c_{\ell - 1}}((u)\phi^{-1}) = \expsum_{c_{\ell - 1}}(u) + k\);
        \item \(\expsum_{c_\ell}((u)\phi^{-1}) = \expsum_{c_\ell}(u) + 1\).
    \end{enumerate}
    These each imply, respectively:
    \begin{enumerate}
        \item \(\expsum_{c_{i + 1}}(u) = \expsum_{c_i}(u)\), for all \(i \notin \{-1, 0, \ell - 1, \ell\}\);
        \item \(\expsum_{c_0}(u) = \expsum_{c_{-1}}(u) - k\);
        \item \(\expsum_{c_1}(u) = \expsum_{c_0}(u) - 1\);
        \item \(\expsum_{c_{\ell}}(u) = \expsum_{c_{\ell - 1}}(u) + k\);
        \item \(\expsum_{c_{\ell + 1}}(u) = \expsum_{c_\ell}(u) + 1\).
    \end{enumerate}
    From (1), we have \(\expsum_{c_{\ell + 1}}(u) = \expsum_{c_{\ell + 2}}(u) = \cdots\), and so
    these values must all be \(0\), as \(u\) cannot have have infinitely many \(c_i\)s with non-zero
    exponent-sum. Similarly, \(\expsum_{c_{-1}}(u) = \expsum_{c_{-2}}(u)
    = \cdots = 0\). Taken with (2) and (5), these imply that \(\expsum_{c_0}(u) = -k\) and
    \(\expsum_{c_\ell}(u) = -1\). Then (3) and (4) can be applied to show
    that \(\expsum_{c_1}(u) = \expsum_{c_{\ell - 1}}(u) = -(k + 1)\). Finally,
    (1) implies that \(\expsum_{c_i}(u) = -(k + 1)\) for all \(i \in \{1, \ldots, \ell - 1\}\).
    We can conclude that \(\mathbf{ab}(u q^\ell) = \ab(c_0^{-k} c_1^{-(k + 1)} \cdots c_{\ell - 1}^{-(k + 1)}
    c_\ell^{-1} q^\ell)\).

    Using the fact that \(c_i = q^i c q^{-i}\) gives
    \[\mathbf{ab}(uq^\ell) 
%    = c^{-k} q c^{-(k + 1)} q^{-1} \cdots
%    q^{\ell - 1} c^{-(k + 1)} q^{-(\ell - 1)} q^\ell c^{-1} q^{-\ell} q^\ell
    = \ab\left(c^{-k} \cdot \prod_{i = 1}^{\ell - 1} (q^i c^{-(k + 1)} q^{-i}) \cdot q^\ell
    c^{-1} q^{-\ell} q^\ell \right).\]
    Moving to the original (free-by-cyclic) presentation for \(G\) via the map \(q \mapsto a^{-1}t\)
    and \(c \mapsto t\) gives
    \begin{align*}
        \mathbf{ab}(uq^\ell) 
        & = \ab\left(t^{-k} \cdot \prod_{i = 1}^{\ell - 1} (a^{-1} t)^i t^{-(k + 1)}
        (a^{-1} t)^{-i} \cdot (a^{-1} t)^\ell t^{-1} \right) \\
        & = \ab \left(a^{-\ell} t^{-k + \ell - 1} \cdot
        \prod_{i = 1}^{\ell - 1}(t^{- (k + 1)}) \right) \\
        & = \ab(a^{-\ell} t^{\ell - (k + 1) - (\ell - 1)(k + 1)}) \\
        & = \ab(a^{-\ell} t^{-k \ell}).
%        & = t^{-k} \cdot \prod_{i = 1}^{\ell - 1} a^{-1} t t^{-(k + 1)}
%        \cdot (a^{-1} t)^{-(\ell - 1)} (a^{-1} t)^\ell t^{-1}\\
%        & = t^{-k} \cdot \prod_{i = 1}^{\ell - 1} a^{-1} t^{-k}
%        \cdot a^{-1} \\
%        & = t^{-k} (a^{-1} t^{-k})^{\ell - 1} a^{-1} t^{-k} \\
%        & = a^{-\ell} t^{-k \ell} 
    \end{align*}
    We have thus shown that \(\ab(C_G(at^k)) \subseteq \{a^\ell t^{k \ell} \mid \ell \in \Z\}\).
    The reverse containment follows since \(\langle at^k \rangle \subseteq C_G(at^k)\) and
    \(\ab(\langle at^k \rangle) = \{a^\ell t^{k \ell} \mid \ell \in \Z\}\).
\end{proof}

The next lemma gives the \(\DPAB{G}\)-definable sets we will need to encode multiplication
of integers into \(\DPAB{G}\). 

\begin{lem}
    \label{Bob-gp-ab-defable}
    Let \(G = \langle a, b, t \mid tat^{-1}
    = ab, tbt^{-1} = b \rangle\). Then the following sets are
    \(\DPAB{G}\)-definable.
    \begin{multicols}{2}
    \begin{enumerate}
        \item \(\langle a, b \rangle\);
        \label{item:ab-ab-defable}
        \item \(\{u t^k \in G \mid \expsum_a(u) = 0\}\);
        \label{item:uexp0-ab-defable}
        \item \(\{ut^k \in G \mid \expsum_a(u) = k\}\);
        \label{item:uexpk-ab-defable}
        \item \(\{(at^k, a^\ell t^{k\ell}) \mid k, \ell \in \Z\}\).
        \label{item:atk-al-tkl-ab-defable}
    \end{enumerate}
    \end{multicols}
\end{lem}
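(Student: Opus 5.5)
Here $\ab(G)\cong\Z^2$ with basis $\ab(a),\ab(t)$, and $\ab(b)=0$ since $tat^{-1}=ab$. So $\expsum_a$ and $\expsum_t$ are well defined, and for $g=ut^k$ with $u\in F(a,b)$ we have $\expsum_t(g)=k$ and $\expsum_a(g)=\expsum_a(u)$. An abelianisation constraint in $G$ is therefore a linear condition on these two exponent sums, so each set can be written as the solution set of equations together with such linear constraints. I will use freely that $\DPAB{G}$-definable sets are closed under intersection and under products $XY$ of definable sets. I will also use that $\langle a\rangle$ and $\langle t\rangle$ are $\DP{G}$-definable by \cref{Bob-gp-defable}.

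For \eqref{item:ab-ab-defable}: $\langle a,b\rangle=F(a,b)$ is the kernel of $\expsum_t$. I would define it as $\{X\mid \ab(X)\in\langle\ab(a)\rangle\}$ via a variable $Y\in\langle a\rangle$ and the constraint $\ab(X)=\ab(Y)$. Equivalently, $XY^{-1}\in[G,G]$, which is an abelianisation constraint on a new variable equal to $XY^{-1}$. For \eqref{item:uexp0-ab-defable}: take $X$ with $\ab(X)=\ab(T)$ for some $T\in\langle t\rangle$, which forces $\expsum_a(X)=0$. For \eqref{item:uexpk-ab-defable}: take $X$ with $\ab(X)=\ab(T)+\ab(Y)$, where $T=t^k\in\langle t\rangle$, $Y=a^m\in\langle a\rangle$, and impose the extra constraint $\ab(Y)=\ab(\Theta(T))$. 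That last step needs care, because there is no word map sending $t^k$ to $a^k$. Instead I would write the condition directly as the abelian equation $\ab(X)=\ab(T)+\ab(W)$, where $W$ is a fresh variable with $\ab(W)=\ab(T)$ in the torsion-free $\Z^2$ sense "the $a$-coordinate equals the $t$-coordinate". Concretely, I would use $X=Za$-type decompositions: define $S=\{a^mt^m\}$ as the $\DP{G}$-definable set $\{(at)^m\}$ up to abelianisation. Then $X$ lies in the set in \eqref{item:uexpk-ab-defable} iff $\ab(X)=\ab(at)^m$ for some $m$, i.e.\ $X\,((at)^{m})^{-1}\in[G,G]$. To get $(at)^m$ I use the centraliser $C_G(at)$, which is $\DP{G}$-definable. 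By \cref{ab-cents-Bob-gp} with $k=1$, its abelianisation is exactly $\{\ab(a^mt^m)\}$. So the set is $\{X\mid \exists Y\in C_G(at),\ XY^{-1}\in[G,G]\}$.

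For \eqref{item:atk-al-tkl-ab-defable}: the pairs $(X,Y)$ are given by $X=at^k$, obtained as $X=aT$ with $T\in\langle t\rangle$, and $Y\in C_G(X)$ with an adjustment. By \cref{ab-cents-Bob-gp}, $\ab(C_G(at^k))=\{\ab(a^\ell t^{k\ell})\}$, which is the image of the set we want. It remains to pin $Y$ down to the exact element $a^\ell t^{k\ell}$ rather than just its abelian image. I would take $Y=A\,T'$ with $A\in\langle a\rangle$ and $T'\in\langle t\rangle$ (both $\DP{G}$-definable), and require $YW^{-1}\in[G,G]$ for some $W\in C_G(X)$. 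Since $\ab(a^\ell t^{m})$ determines $(\ell,m)$, this forces $Y=a^\ell t^{k\ell}$, and conversely $W=Y$ if $Y\in\langle at^k\rangle$ abelianises correctly. The main obstacle is this last item: making sure the centraliser computation is used in its exact form, and that the normal form $a^\ell t^m$ is unique, which is clear from abelianisation. The other items are routine.
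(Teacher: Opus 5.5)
Your proposal is correct and is essentially the paper's proof. Items (1)--(3) are exactly the products $[G,G]\langle a\rangle$, $[G,G]\langle t\rangle$ and $[G,G]\,C_G(at)$, the last via \cref{ab-cents-Bob-gp} with $k=1$. Item (4) is the same system $X=aZ_1$, $Y=VZ_2$, $UX=XU$, $\ab(U)=\ab(Y)$ with $Z_1,Z_2\in\langle t\rangle$ and $V\in\langle a\rangle$.

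Two small clean-ups are needed. First, drop the $\Theta$/``fresh $W$'' detour in (3): it does not work as written, and the $C_G(at)$ argument you end with is all you need. Second, in the converse direction of (4) the witness should be $W=(at^k)^\ell$, not $W=Y$. This $W$ has the same abelianisation as $Y=a^\ell t^{k\ell}$ but is in general a different element; for example $(at^k)^2=a^2b^kt^{2k}\neq a^2t^{2k}$ for $k\neq 0$.
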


\begin{proof}
The proofs of the four parts are very similar, relying on information about the
abelianisation of several sets, and the fact that \(\mathbf{ab}(b)=1\), which
follows immediately from the presentation of $G$. Additionally, note that \(a\)-exponent sums are well-defined, as \(\ab(G) = \ab(a) \oplus \ab(t)\).

    For \eqref{item:ab-ab-defable}, we claim that \(\langle a, b \rangle = [G,
    G] \langle a \rangle\)w. As a product of \(\DPAB{G}\)-definable sets is
    \(\DPAB{G}\)-definable, and, the latter is \(\DPAB{G}\)-definable
    (\cref{Bob-gp-defable}\eqref{item:a-definable}), showing this equality thus
    shows \eqref{item:ab-ab-defable}. Let \(u \in \langle a, b \rangle\). Then
    \(u a^{-\expsum_a(u)} \in [G, G]\), and so \(u \in [G, G] \langle a
    \rangle\). Conversely, let \(u t^k a^\ell \in [G, G] \langle a \rangle \).
    Then \(k = 0\) and \(\expsum_a(u) = 0\), and so \(ut^k a^\ell = u a^\ell \in
    \langle a, b \rangle\), as required.

    For \eqref{item:uexp0-ab-defable}, we claim that \(\{ut^k \in G \mid \expsum_a(u) = 0\} = [G, G] \langle t
    \rangle\), which is \(\DPAB{G}\)-definable as a product of
    \(\DPAB{G}\)-definable sets, using \cref{Bob-gp-defable}\eqref{item:r-definable}. The proof of this
    claim is identical to the analogous proof in \eqref{item:ab-ab-defable}. 

    For \eqref{item:uexpk-ab-defable}, we will show that \(\{ut^k \in G \mid \expsum_a(u) = k\} = [G,
    G]C_G(at)\). This suffices, as the latter is a product of two
    \(\DPAB{G}\)-definable sets. First note that \cref{ab-cents-Bob-gp} tells us
    that \(\mathbf{ab}(C_G(at)) = \{a^\ell t^\ell \mid \ell \in \Z\}\). 
%    For the forward containment, let \(a^i t^j \in \mathbf{ab}(C_G(at))\). Then there exists
%    \(ut^k \in C_G(at)\) such that \(i = \expsum_a(u)\) and \(j = k\). As
%    \(ut^k \in C_G(a)\), we have
%    \begin{align*}
%        uab^k t^{k + 1}
%        & = ut^k \cdot at\\
%        & = at \cdot ut^k \\
%        & = a \cdot (u)\phi \cdot t^{k + 1},
%    \end{align*}
%    where \(\phi \in \Aut(FG(a, b))\) is defined by \((a) \phi = ab\) and
%    \((b)\phi = b\). Thus \(uab^k = a \cdot (u)\phi\). Thus 
%    \(\expsum_b(a\cdot (u)\phi) = \expsum_b(u a b^k)\). Since for any \(v \in FG(a, b)\)
%    we have \(\expsum_b((v)\phi) = \expsum_b(v) + \expsum_a(v)\), it follows
%    that
%    \begin{align*}
%        \expsum_a(u) + \expsum_b(u) & = \expsum_b((u)\phi) \\ 
%        & = \expsum_b(a \cdot (u) \phi) \\
%        & = \expsum_b(uab^k) \\
%        & = \expsum_b(u) + k.
%    \end{align*}
%    It follows that \(\expsum_a(u) = k\), and so \(i = \expsum_u(a) = k = j\),
%    as required.
%
%    For the reverse containment, note that \(\langle at \rangle \subseteq
%    C_G(at)\), and \(\mathbf{ab}(\langle at \rangle) = \{a^k t^k \mid k \in
%    \Z\}\), as required. We have thus shown that \(\mathbf{ab}(C_G(at)) = \{a^k
%    t^k \mid k \in \Z\}\).
    We now show that \(\{ut^k \in G \mid \expsum_a(u) = k\} = [G, G]C_G(at)\).
    This is equivalent to showing that \(\mathbf{ab}(\{ut^k \in G \mid
    \expsum_a(u) = k\}) = \mathbf{ab}(C_G(at))\). But \(\mathbf{ab}(\{ut^k \in G 
    \mid \expsum_a(u) = k\}) = \{a^k t^k \mid k \in \Z\}\), which we have
    just noted equals \(\mathbf{ab}(C_G(at))\), as required.

    It remains to show \eqref{item:atk-al-tkl-ab-defable}. Let \(\mathcal E_1\) and \(\mathcal E_2\) be systems of equations in \(G\) each with
    a variable \(Z_1\) and \(Z_2\), respectively, such that the set of solutions to \(Z_i\) in
    \(\mathcal E_i\) is \(\langle t \rangle\), 
    which exists by \cref{Bob-gp-defable}\eqref{item:r-definable}. Let \(\mathcal F\) be a system of equations in
    \(G\) with a variable \(V\) such that the set of solutions to \(V\) is \(\langle a \rangle\). Next, note that by \cref{ab-cents-Bob-gp}, we have
    that the set of solutions to \((X, Y)\) in the system 
    \[
        \mathcal E_1 \wedge \mathcal E_2 \wedge (X = aZ_1) \wedge (UX = XU) \wedge (\ab(U) = \ab(Y)) \wedge (Y = V Z_2)
    \]
    is \(\{(a t^k, a^\ell t^{k \ell}) \mid k, \ell \in \Z\}\), as required. 
\end{proof}

We can now prove the main theorem of this section: that \(\DPAB{G}\) is undecidable.
The proof repeatedly applies \cref{Bob-gp-ab-defable} to encode Hilbert's 10th 
problem in the ring of integers into \(\DPAB{G}\).

\begin{theorem}
    Let \(G = \langle a, b, t \mid tat^{-1} = ab, tbt^{-1} = b \rangle\). Then
    \(\DPAB{G}\) is undecidable
\end{theorem}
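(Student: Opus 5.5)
We reduce Hilbert's 10th problem to \(\DPAB{G}\) by interpreting \((\Z, +, \times)\) in \(G\) using only \(\DPAB{G}\)-definable sets. Represent the integer \(k\) by the element \(t^k\); by \cref{Bob-gp-defable}\eqref{item:r-definable} the set \(\langle t \rangle\) is \(\DP{G}\)-definable, so this domain is definable. Addition is immediate: \(k_1 + k_2 = k_3\) holds if and only if \(t^{k_1} t^{k_2} = t^{k_3}\), which is a plain equation. Equality and constants are equally straightforward. The work lies in defining multiplication, that is, the set \(\{(t^k, t^\ell, t^{k\ell}) \mid k,\ell \in \Z\}\).

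For multiplication we plan to use \cref{Bob-gp-ab-defable}\eqref{item:atk-al-tkl-ab-defable}, which provides the pairs \((at^k, a^\ell t^{k\ell})\). Given variables \(Z_k, Z_\ell, Z_m \in \langle t \rangle\), we impose \(X = aZ_k\), take \(Y\) with \((X,Y)\) in the set of \cref{Bob-gp-ab-defable}\eqref{item:atk-al-tkl-ab-defable}, and then force \(Y = a^\ell t^{k\ell}\) to match \(\ell\) and \(m\). Write \(Y = VW\) with \(V \in \langle a \rangle\) (definable by \cref{Bob-gp-defable}\eqref{item:a-definable}) and \(W \in \langle t \rangle\). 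Set \(W = Z_m\), which yields \(m = k\ell\). To tie \(V = a^\ell\) to \(Z_\ell = t^\ell\), we require \(VZ_\ell^{-1} \in [G,G]\) via an abelianisation constraint, or equivalently \(\ab(V) - \ab(a)\cdot(\ldots)\). Because \(\ab(G) = \langle \ab(a) \rangle \oplus \langle \ab(t) \rangle \cong \Z^2\), this is not literally a commutator condition. We therefore use \cref{Bob-gp-ab-defable}\eqref{item:uexpk-ab-defable}, which defines the set of \(ut^j\) with \(\expsum_a(u) = j\). Requiring \(VZ_\ell\) to lie in this set gives exactly \(\expsum_a(V) = \ell\), so \(V = a^\ell\).

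Combining these, any polynomial Diophantine system over \(\Z\) translates computably into a system of equations with abelianisation constraints in \(G\). This uses finitely many copies of the defining systems, together with auxiliary variables, and the translated system is solvable if and only if the original is. By Matiyasevich's theorem (the negative solution to Hilbert's 10th problem), \(\DPAB{G}\) is undecidable.

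The main obstacle is checking that the definable sets compose correctly. In particular, the abelianisation constraint inside the definition of \cref{Bob-gp-ab-defable}\eqref{item:atk-al-tkl-ab-defable} must genuinely pin \(Y\) down to \(a^\ell t^{k\ell}\), rather than merely to its abelian image; the factorisation \(Y = VZ_2\) there handles this. Uniqueness of the normal form \(a^\ell t^j\) in \(\langle a\rangle\langle t\rangle\), read off via exponent sums, then ensures the integers are recovered faithfully.
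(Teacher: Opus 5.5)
Your proof is correct and follows the paper's strategy: you reduce Hilbert's 10th problem using the definable sets of \cref{Bob-gp-defable} and \cref{Bob-gp-ab-defable}, extracting multiplication from the pairs $(at^k, a^\ell t^{k\ell})$ and transferring an exponent from $a$ to $t$ via the set $\{ut^r \mid \expsum_a(u) = r\}$. The only difference is that you encode integers in $\langle t\rangle$ rather than $\langle a\rangle$, so $k$ and $k\ell$ are read off directly and only $\ell$ needs transferring, whereas the paper transfers all three exponents (also using the set with $\expsum_a(u)=0$); your version is slightly leaner and avoids the sign bookkeeping in the paper's constraints (i) and (iii).
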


\begin{proof}
    We show that Hilbert's 10th problem over the ring of integers reduces to
    \(\DPAB{G}\). The copy of \(\Z\) in \(G\) we use is \(\langle a \rangle\),
    which is \(\DPAB{G}\)-definable by \cref{Bob-gp-defable}\eqref{item:a-definable}. To encode
    Hilbert's 10th problem in \(\Z\), it suffices to encode the two types
    of equation \(U + V = W\) and \(UV = W\), where \(U\), \(V\) and \(W\)
    can be either variables or constants. Addition is immediate: we simply
    define new variables/constants \(X_U\), \(X_V\) and \(X_W\), by
    \(X_U = a^U\) if \(U\) is a constant and then add the constraints \(X_U \in \langle a \rangle\)
    (which is \(\DPAB{G}\)-definable) if \(U\) is a variable, along with the
    analogous statements for \(V\) and \(W\), and then add the equation
    \(X_U X_V = X_W\). 

    Multiplication requires the application of \cref{Bob-gp-ab-defable}. We 
    achieve this by intersecting various sets we know to be \(\DPAB{G}\)-definable.
    We have from \cref{Bob-gp-defable}\eqref{item:ab-ab-defable} and \eqref{item:atk-al-tkl-ab-defable}, that \(\{(a^i, a^j, a^m, at^k, a^\ell t^{k\ell}) \mid i, j, k, \ell, m \in \Z\}\) is \(\DPAB{G}\)-definable. So let
    \(X_1, X_2, X_3, X_4\) and \(X_5\) be variables in a system \(\mathcal E\) of equations
    with abelianisation constraints in \(G\) such that the set of solutions to \((X_1, X_2, X_3, X_4,
    X_5)\) in \(\mathcal E\) is the stated set. We then add the additional \(\DPAB{G}\)-definable) constraints
    to \(\mathcal E\):
    \begin{enumerate}
        \item[(i)] \(X_1 (a^{-1} X_4)^{-1} \in \{ut^r \in G \mid \expsum_a(u) = r\}\);
        \item[(ii)] \(X_2 X_5^{-1} \in \{ut^r \in G \mid \expsum_a(u) = 0\}\);
        \item[(iii)] \(X_3 (X_2^{-1} X_5)^{-1} \in \{ut^r \in G \mid \expsum_a(u) = r\}\).
    \end{enumerate}
    The constraints are all \(\DPAB{G}\)-definable, by \cref{Bob-gp-ab-defable}\eqref{item:uexp0-ab-defable} and \eqref{item:uexpk-ab-defable}. Furthermore,
    (i) is equivalent to \(i =k\), (ii) is equivalent to \(j = \ell\), and given (ii), (iii) is
    equivalent to \(m = k \ell\). Thus with these additional constraints the set of solutions
    to \((X_1, X_2, X_3)\) is \(\{a^k, a^\ell, a^{k \ell} \mid k, \ell \in \Z\}\). We have thus
    defined multiplication using the Diophantine Problem with abelianisations constraints in
    \(G\). The undecidability of \(\DPAB{G}\) now follows from the undecidability of
    Hilbert's 10th problem over the ring of integers \cite{Matijasevic}.    
\end{proof}

\section{Cyclically pinched groups}
\label{sec:cyc-pinch}
In this section we fully classify which cyclically pinched groups have decidable
Diophantine Problems with abelianisation constraints. A \emph{cyclically pinched
group $G$} is a free product of two finite rank free groups amalgamated over an
infinite cyclic group; that is, 
\[G=F(a_1, \dots, a_n)\ast_{\mathbb{Z}} F(b_1,
\dots, b_m),
\]
and the amalgamation is of the form $\langle u \rangle=\langle v 
\rangle \cong \mathbb{Z}$, where $u \in F(a_1, \dots, a_n)$ and $v\in F(b_1,
\dots, b_m)$ are non-trivial.  As we will show, the only case when cyclically
pinched groups have a decidable Diophantine Problem with abelianisation
constraints is when they have only two generators, and are thus one-relator
groups with centre. This allows us to apply results from
Section~\ref{sec:fbyc-with-centre}. 

A fairly straightforward combination of existing results, gives that the
Diophantine Problem (without constraints) is decidable in all cyclically pinched
groups.

\begin{cor_no_number}[\ref{DP-cyc-pinch}]
The existential theory (and hence Diophantine Problem) is decidable in all cyclically pinched groups.
\end{cor_no_number}

\begin{proof}
Let \(G = F(a_1, \ldots, a_m) \ast_\Z F(b_1, \ldots, b_m)\) be a cyclically
pinched group across words \(u\) and \(v\). If at least one of $u$  and $v$ is
not a power, then $G$ is hyperbolic by \cite{BestvinaFeighn92}*{Corollary (2), page 99}
%{color{red} AL: do we mean the corollary at the end of the intro of \cite{BestvinaFeighn92}? If so, why can't these groups contain BS subgrousps?}, 
so we can apply
\cite{DahmaniGuirardel}*{Theorem 1} to conclude that \(\DP{G}\) is decidable. If
$u$ and $v$ are both powers, then write $u=\bar{u}^p$ and $v=\bar{v}^q$ with
$\bar{u} \in F(a_1, \dots, a_n)$ and $\bar{v}\in F(b_1, \dots, b_m)$. Then
\[
G=F(a_1, \dots, a_n)\ast_{\langle \bar u \rangle} \langle \bar u, \bar v \mid \bar u^p = \bar v^q \rangle
\ast_{\langle \bar v \rangle} F(b_1, \dots, b_m).
\]
By Dahmani's combination theorem \cite{DahmaniComb}*{Theorem 0.1}, $G$ is hyperbolic relative
to free groups and the middle torus knot group. We have that the existential
theory in free groups is decidable (see for example
\cite{DahmaniGuirardel}*{Theorem 1}), and the torus knot group is a one-relator
group with centre, and thus by \cref{fbyc-embed} is a finite-index subgroup of a
group that is virtually a direct product of hyperbolic groups, and so also has a
decidable existential theory, by \cite{equations_VDP}*{Theorem 3.1}. Hence all
parabolic groups are torsion-free and have decidable existential theories, and
so we can apply \cite{DahmaniRelHyp}*{Theorem 0.1} to conclude that the
existential theory (and hence Diophantine Problem) of \(G\) is decidable.
\end{proof}

    As with many proofs of undecidability of Diophantine-like problems, we begin
    by computing various centralisers.

    \begin{lem}
        \label{cyclically-pinched-cents}
        Let \(G = \langle a_1, \ldots, a_r, b_1, \ldots, b_r \mid u = v
        \rangle\), where \(u \in \{a_1, a_1^{-1}, \ldots, a_n, a_n^{-1}\}^\ast\)
        and \(v \in \{b_1, b_1^{-1}, \ldots, b_s, b_s^{-1}\}^\ast\) be such that
        \(r\geq 2\), \(s \geq 1\) and \(u\) is cyclically reduced and not a power of
        \(a_1\) or \(a_2\). Then
        \begin{enumerate}
            \item \(C_G(a_1) = \langle a_1 \rangle\);
            \item \(C_G(a_2) = \langle a_2 \rangle\);
            \item At least one of the following holds:
            \begin{enumerate}
                \item There exists \(q \in \Z\) such that for all \(k \in \Z
                \setminus \{q\}\), we have \(C_G(a_1a_2^k) = \langle a_1 a_2^k
                \rangle\). In addition, \(C_G(a_1^2 a_2^2) = \langle a_1^2 a_2^2
                \rangle\);
                \item \(C_G(a_1 a_2^k) = \langle a_1 a_2^k \rangle\) for
                all \(k \in \Z\).
            \end{enumerate}
        \end{enumerate}
    \end{lem}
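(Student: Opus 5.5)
We will compute the centralisers using the standard description of centralisers in an amalgamated free product $G = A \ast_C B$ with $A = F(a_1,\dots,a_r)$, $B = F(b_1,\dots,b_s)$ and $C = \langle u \rangle = \langle v \rangle$. The key fact we use is the following. If $g \in A$ is not conjugate in $A$ into $C$, then $C_G(g) = C_A(g)$. Indeed, $g$ acts on the Bass--Serre tree fixing exactly one vertex (the vertex $A$), since fixing an adjacent edge would mean $g$ is conjugate into $C$. Any element commuting with $g$ permutes $\operatorname{Fix}(g)$, so it fixes that vertex and hence lies in $A$. Since $A$ is free, $C_A(g)$ is the cyclic subgroup generated by the root of $g$. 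So each claim reduces to two checks in $F(a_1,\dots,a_r)$: the element is not a proper power, and it is not conjugate into $\langle u \rangle$.

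For (1) and (2): $a_1$ and $a_2$ are primitive, so they are not proper powers. Suppose $a_1$ were conjugate in $A$ to $u^m$. Then $a_1$ would be conjugate to $u^m$ in the free group, and since $u$ is cyclically reduced, $u^m$ is cyclically reduced. Cyclically reduced conjugates are cyclic permutations of one another, so $u^m$ would equal $a_1^{\pm1}$. This forces $|m| = 1$ and $u = a_1^{\pm 1}$, contradicting the assumption that $u$ is not a power of $a_1$. The same argument gives (2), and the root argument gives $C_G(a_i) = \langle a_i \rangle$.

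For (3): the elements $a_1a_2^k$ and $a_1^2a_2^2$ are cyclically reduced and not proper powers, which can be read off from the cyclic words. It remains to control when they are conjugate into $\langle u \rangle$. If $a_1a_2^k$ is conjugate to $u^m$, then as before $u^m$ is a cyclic permutation of $(a_1a_2^k)^{\pm1}$. Length comparison gives $|m|\,|u| = 1+|k|$. Since $a_1a_2^k$ is not a proper power, $|m| = 1$, so $u$ is a cyclic permutation of $(a_1 a_2^k)^{\pm 1}$. This determines $k$ uniquely (as $\pm$ the exponent sum of $a_2$ in $u$, with the sign fixed by the $a_1$-exponent), so at most one value $q$ of $k$ is exceptional. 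If no $k$ is exceptional, we are in case (b).

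If there is an exceptional $q$, then $u$ is a cyclic permutation of $(a_1a_2^q)^{\pm1}$, so $u$ has $a_1$-exponent sum $\pm1$. Now $a_1^2a_2^2$ has $a_1$-exponent sum $2$, and any conjugate of $u^m$ has $a_1$-exponent sum $\pm m$. Hence $a_1^2a_2^2$ could only be conjugate to $u^{\pm 2}$. That would make $a_1^2a_2^2$ a proper power, since the cyclically reduced words would match up to cyclic permutation, and it is not one. This gives case (a).

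The main obstacle is the Bass--Serre step: justifying carefully that an element of a vertex group not conjugate into the edge group has its centraliser inside that vertex group. The rest is routine free-group combinatorics of cyclic words. One degenerate case needs a remark: when $|k| + 1 = |u|$ with $k=0$ the bound degenerates, but then $u = a_1^{\pm1}$, which is excluded.
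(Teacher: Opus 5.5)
Your proposal is correct and follows essentially the same route as the paper. You reduce each centraliser to the free factor $F(a_1,\ldots,a_r)$ using the fact that an element of a factor not conjugate into the amalgamated subgroup has its centraliser inside that factor (the paper cites Proposition 2.3 of Bark's thesis, whereas you prove it directly on the Bass--Serre tree), and you then exclude conjugacy into $\langle u \rangle$ by cyclic-word arguments in the free group. Your exponent-sum arguments for the uniqueness of $q$ and for $C_G(a_1^2a_2^2) = \langle a_1^2a_2^2 \rangle$ also spell out steps that the paper's proof leaves implicit.
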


    \begin{proof}
        For (1) ((2) is symmetric), we must first show that \(a_1\) is not
        conjugate (in \(\langle a_1, \ldots, a_r \rangle\)) to any element in
        \(\langle u \rangle\). But if it were, then there would exist
        \(g \in \langle a_1, \ldots, a_r \rangle\) and \(k \in \Z \setminus
        \{0\}\) satisfying \(gu^k g^{-1} = a_1\). It follows that \(a_1\) is a \(k\)th
        power, and so \(k = \pm 1\). Thus \(g u g^{-1} = a_1^{\pm 1}\), and so
        \(u\) is conjugate to a power of \(a_1\), contradicting the fact that
        \(u\) is cyclically reduced and not a power of \(a_1\). Thus \(a_1\) is
        not conjugate to any element in \(\langle u \rangle\). We can therefore apply
        \cite{BarkThesis}*{Proposition 2.3} to conclude that \(C_G(a_1) =
        C_{\langle a_1, \ldots, a_r \rangle} (a_1) = \langle a_1 \rangle\).

        For (3), noting that \(u\) is cyclically reduced, we have two cases:
        either \(u\) is conjugate to a power of \(a_1 a_2^q\), or it is not. If it
        is not, then we can apply the same argument we used to show \(C_G(a) =
        \langle a_1 \rangle\) to conclude that \(a_1 a_2^q\) is not conjugate to a
        power of \(u\), and hence by \cite{BarkThesis}*{Proposition 2.3},
        \(C_G(a_1 a_2) = C_{\langle a_1, \ldots, a_r \rangle}(a_1 a_2) = \langle
        a_1 a_2 \rangle\). Otherwise, we have that \(u = g (a_1 a_2^q)^\ell g^{-1}\).
        In this case, noting that non-trivial powers of \(a_1 a_2^q\) and \(a_1
        a_2^k\) for \(k \in \Z \setminus \{q\}\) are never conjugate in \(\langle a_1, \ldots, a_r \rangle\), we
        have that \(u\) is not conjugate to a power of \(a_1 a_2^k\) for all
        \(k \in \Z \setminus \{q\}\). Applying
        exactly the same argument again, we can conclude that
        \(C_G(a_1 a_2^k) = C_{\langle a_1, \ldots, a_r \rangle}(a_1 a_2^k) = \langle
        a_1 a_2^k \rangle\).
    \end{proof}

    We next show that the Diophantine Problem with abelianisation constraints
    is undecidable for a large class of cyclically pinched groups whose presentations
    satisfy various conditions. As we will see later, it turns out that almost all
    cyclically pinched groups are isomorphic to one of these. The proof uses the
    sets we showed were definable as centralisers, together with the commutator
    subgroup to encode Hilbert's 10th problem over the ring of integers into the
    Diophantine Problem with abelianisation constraints of our cyclically
    pinched group.
    \begin{proposition}
        \label{cyc-pinched-not-power-und}
        Let \(G = \langle a_1, \ldots, a_r, b_1, \ldots, b_s \mid u = v
        \rangle\), where \(u \in \{a_1, a_1^{-1}, \ldots, a_n, a_n^{-1}\}^\ast\)
        and \(v \in \{b_1, b_1^{-1}, \ldots, b_s, b_s^{-1}\}^\ast\) are such that
        \(r\geq 2\), \(s \geq 1\) and \(u\) is cyclically reduced and not a
        power of \(a_1\) or \(a_2\). Then \(\DPAB{G}\) is undecidable. 
    \end{proposition}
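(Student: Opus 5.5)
The plan is to interpret $(\Z,+,\times)$ in $\DPAB{G}$ and invoke the undecidability of Hilbert's 10th problem, in the same spirit as the proof for the first hydra group. Everything will be built from the centralisers of \cref{cyclically-pinched-cents} together with the abelianisation constraint $X \in [G,G]$.

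First I fix the copies of $\Z$. By \cref{cyclically-pinched-cents}(1),(2), the sets $\langle a_1\rangle = C_G(a_1)$ and $\langle a_2\rangle = C_G(a_2)$ are $\DP{G}$-definable. I take $\langle a_1\rangle$ as the model of $\Z$, with $a_1^k$ representing $k$. Addition is then just the equation $X_UX_V=X_W$ with all three variables constrained to $\langle a_1\rangle$.

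To transfer integers between the two copies, I need to compare exponent sums. Since $u=v$ is the only relation, $\ab(G)$ is $\Z^{r+s}$ modulo $\ab(u)-\ab(v)$. So $\ab(a_1)$ and $\ab(a_2)$ need not be independent. I would first check that some integer combination still lets me read off $k$ from $\ab(a_1^k)$ and $\ell$ from $\ab(a_2^\ell)$ up to a fixed multiple. If $\ab(a_1)$ and $\ab(a_2)$ are independent, then $X\in\langle a_1\rangle$, $Y\in\langle a_2\rangle$ and $XY^{-1}\in[G,G]$ forces both to be trivial. In that case "$a_1^k$ and $a_2^\ell$ encode the same integer" is better expressed through a third element, which I handle next.

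For multiplication, I use the centraliser of $a_1a_2^k$, which is $\langle a_1a_2^k\rangle$ by part (3), except possibly for a single exceptional $k=q$. The set
\[
\{(a_2^k, (a_1a_2^k)^\ell)\}
\]
is definable via $Z\in\langle a_2\rangle$, $X=a_1Z$, $W\in C_G(X)$, and its second coordinate has abelianisation $\ell\ab(a_1)+k\ell\ab(a_2)$. Adding abelianisation constraints $W\cdot a_1^{-\ell'}\cdot a_2^{-m}\in[G,G]$, with $a_1^{\ell'}\in\langle a_1\rangle$ and $a_2^{m}\in\langle a_2\rangle$, then yields $m=k\ell$ whenever $\ab(a_1),\ab(a_2)$ are independent in $\ab(G)$. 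Where they are not, I would pass to exponent-sum homomorphisms $G\to\Z$ killing $\ab(v)$-directions appropriately, replacing $[G,G]$-membership by membership in $[G,G]\langle\ldots\rangle$ products of definable sets, as in \cref{Bob-gp-ab-defable}. The transfer between $a_1^k$ and $a_2^k$ is done the same way, using the pair $(a_1^k,a_2^k)$ obtained from $C_G(a_1a_2)^{}$-type elements: $(a_1a_2)^k$ has abelianisation $k(\ab(a_1)+\ab(a_2))$.

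I expect the main obstacle to be the possible exceptional exponent $q$ in case (3)(a), where $C_G(a_1a_2^q)$ is larger than $\langle a_1a_2^q\rangle$. To avoid it, I would encode the integer $k$ by $a_2^{2k+c}$ for a constant $c$ chosen so that $2k+c\ne q$ for all $k$, for instance $c$ with parity opposite to $q$. I would then recover $k\ell$ by linear bookkeeping: $(2k+c)\ell$ is linear in $k\ell$ and $\ell$. The extra statement $C_G(a_1^2a_2^2)=\langle a_1^2a_2^2\rangle$ gives an alternative safe generator for the transfer step. The remaining task is to check, case by case in the relation $\ab(u)=\ab(v)$, that the abelianisation constraints still separate the needed coordinates. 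I would do this by projecting onto a suitable $\Z$-quotient in which $\ab(a_1)$ and $\ab(a_2)$ have independent images; such a quotient exists since $r\ge2$ and only one relation is imposed, possibly after using the $b$-generators.
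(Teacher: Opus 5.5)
Outside one degenerate case, your plan is the paper's proof. It uses the same centralisers from \cref{cyclically-pinched-cents}, the same parity shift \(a_2^{2k+c}\) to avoid the exceptional \(q\), the same copying of exponents through \(C_G(a_1a_2)\) or \(C_G(a_1^2a_2^2)\), and the same equalities in \(G^{\ab}\) to read off exponents. Recovering \(k\ell\) from \((2k+c)\ell=2k\ell+c\ell\) is a slightly more direct substitute for the paper's squaring-and-polarisation step. (With \(a_1^2a_2^2\) only even exponents are copied, so you must double and then halve; this is harmless.)

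The genuine gap is your treatment of the case where \(\ab(a_1)\) and \(\ab(a_2)\) are dependent in \(G^{\ab}\). No projection can repair this. Abelianisation constraints only see \(G^{\ab}\), and every homomorphism from \(G\) to an abelian group factors through \(G^{\ab}\), so a relation \(x\ab(a_1)+y\ab(a_2)=0\) persists in every such quotient. In any case, two elements of \(\Z\) are never independent. Passing to sets \([G,G]\langle\cdots\rangle\) only coarsens the information further.

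The case really occurs. For \(u=a_1a_2a_1a_2^{-1}\) and \(v=b_1b_2b_1^{-1}b_2^{-1}\) one gets \(G^{\ab}\cong\Z/2\oplus\Z^3\) with \(\ab(a_1)\) of order \(2\). So no constraint distinguishes \(a_1^j\) from \(a_1^{j+2}\), and both your multiplication gadget and your copying gadget fail to pin down exponents.

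The correct repair is a change of presentation, not a quotient. Write \(\bar u,\bar v\in\Z^{r+s}\) for the exponent-sum vectors of \(u\) and \(v\). A dependence means \(x e_{a_1}+y e_{a_2}=n(\bar u-\bar v)\) with \(n\neq 0\), and this forces \(\bar v=0\) and \(0\neq\bar u\in\Z e_{a_1}+\Z e_{a_2}\). Hence \(s\geq 2\) and \(v\) is not a power of any \(b_i\). Cyclically reduce \(v\) by the Tietze conjugation trick from the proof of \cref{thm:cycpinDPAB}, then swap the roles of the two free factors. In the new presentation \(\ab(b_1)\) and \(\ab(b_2)\) are independent, because the relator's exponent-sum vector is now \(-\bar u\), which is non-zero and has no \(b\)-components. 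The original argument then goes through.

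The paper's written proof does not treat this case either. Its assertion that \(\ab(Y)=\ab(UV)\) forces \(U=a_1^j\) and \(V=a_2^{(2i+p)j}\) tacitly uses exactly this independence. So you were right to flag the issue, but the fix you propose cannot work.
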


    \begin{proof}
        We encode the integers together with addition and the squaring map into
        \(G\) via the cyclic subgroup \(\langle a_2 \rangle\). This can encode
        multiplication, and thus Hilbert's 10th problem over \(\Z\), since 
        for all \(x, y \in \Z\) we have \((x + y)^2 = x^2 + 2xy +
        y^2\), and so the equations \(XY = Z\) and \(2Z + X^2 + Y^2 =
        (X+Y)^2\) are equivalent. Since Hilbert's 10th problem over the ring \(\Z\) is
        undecidable \cite{Matijasevic}, encoding this will give the desired
        undecidability.

        First note that \(\langle a_2 \rangle\) is \(\DPAB{G}\)-definable, as
        a centraliser, by \cref{cyclically-pinched-cents}(2). Encoding addition
        of integers realised as powers of \(a_2\) can be done using
        products in this cyclic subgroup; that is, the equation
        \(Z_1 + Z_2 = Z_3\) in \(\Z\) can be encoded by \(X_1 X_2 = X_3\),
        with the constraints \(X_1, X_2, X_3 \in \langle a_2 \rangle\).
        Since these sets are \(\DPAB{G}\)-definable, this is all \(\DPAB{G}\)-definable.
        To deal with the case that some of \(Z_1\), \(Z_2\) and \(Z_3\)
        are constants, we just replace \(X_i\) with the constant \(a_2^{Z_i}\).

        We now encode the squaring unary operation (on the ring \(\Z\)).
%        From
%        \cref{cyclically-pinched-cents}(3), we have that ther exists
%        \(p \in \{1, 2\}\) such that \(C_G(a_1^p a_2^p) = \langle a_2^p\)
        If \(q\) from \cref{cyclically-pinched-cents}(3)(a) exists, define \(p =
        (q + 1) \mod 2\). If \(q\) doesn't exist (that is, we are in case (b)),
        simply set \(p = 0\). We define a system of equations with
        abelianisation constraints in \(G\) to show that the set \(\{(a_2^{2i +
        p}, a_2^j, a_2^{(2i + p)j} ) \mid i, j \in \Z\}\) is \(\DPAB{G}\)-definable.
        Consider the system
        \begin{align}
            & (T a_2 = a_2 T) \wedge \label{T-eqn}\\ 
            & (X = T^2 a_2^p) \wedge \label{X-eqn}\\
            & (Y (a_1 X) = (a_1 X) Y ) \wedge \label{Y-eqn} \\ 
            & (U a_1 = a_1 U) \wedge \label{U-eqn}\\ 
            & (V a_2 = a_2 V) \wedge \label{V-eqn}\\ 
            & (\ab(Y) = \ab(UV)) \wedge \label{ab-Y-eqn}\\ 
            & \begin{cases}
                Wa_1 a_2 = a_1 a_2 W & q \neq 1 \textrm{ or } q \textrm{ does not exist}\\
                W a_1^2 a_2^2 = a_1^2 a_2^2 W & q = 1.
            \end{cases} \quad \wedge \label{W-eqn} \\
            & (S a_2 = a_2 S) \wedge \label{S-eqn} \\
            & \begin{cases}
                Z = S & q \neq 1 \textrm{ or } q \textrm{ does not exist}\\
                Z = S^2 & q = 1.
            \end{cases} \wedge \label{Z-eqn} \\ 
            & (\ab(W) = \ab(UZ)). \label{ab-W-eqn}
        \end{align}
        We claim that the set of solutions to \((X, Z, V)\) is \(\{(a_2^{2i +
        p},  a_2^j, a_2^{(2i + p)j}) \mid i, j \in \Z\}\). Using the centraliser
        descriptions of \(G\) from \cref{cyclically-pinched-cents}(1) and (2), we have that
        the solutions to \(X\) from \eqref{T-eqn} and \eqref{X-eqn} are
        \(\{a_2^{2i + p} \mid i \in \Z\}\), and so the solutions to \((X, Y)\)
        from \eqref{T-eqn}, \eqref{X-eqn} and \eqref{Y-eqn} are \(\{(a_2^{2i +
        p}, (a_1 a_2^{(2i + p)})^j) \mid i, j \in \Z\}\). We can again use the
        centraliser descriptions to see that the solutions to \eqref{U-eqn} and
        \eqref{V-eqn} are \(\langle a_1 \rangle \) and \(\langle a_2 \rangle\),
        respectively. Combining these with \eqref{ab-Y-eqn} along with the fact
        that the solutions to \((X, Y)\) from \eqref{T-eqn}, \eqref{X-eqn} and
        \eqref{Y-eqn} are \(\{(a_2^{2i +
        p}, (a_1a_2^{(2i + p)})^j) \mid i, j \in \Z\}\), gives that the set of solutions
        to \((X, Y, U, V)\) from these equations is
        \[
            \{(a_2^{2i + p}, (a_1 a_2^{(2i + p)})^j, a_1^j, a_2^{(2i + p)j}) \mid i, j \in \Z\}.
        \]
        At this point we very nearly have that the claimed set is
        \(\DPAB{G}\)-definable, except that the \(j\) appears as the exponent of
        \(a_1\), rather that \(a_2\). The remaining equations deal with this,
        but `copying' the exponent of \(U\) onto \(a_2\) within the exponent of
        \(Z\). To see this, note that regardless of the existence or value of
        \(q\), the equations \eqref{W-eqn}, \eqref{S-eqn}, \eqref{Z-eqn} and
        \eqref{ab-W-eqn} imply that the set of solutions to \(Z\) is contained
        in \(\langle a_2 \rangle\) and \(a_2^j\) is a solution to \(Z\) if and
        only if \(a_1^j\) is a solution to \(U\). We can conclude that the set
        of solutions to \((X, Z, V)\) in the above system is \(\{(a_2^{2i + p},
        a_2^j, a_2^{(2i + p)j}) \mid i, j \in \Z\}\).

        We can now finally encode the squaring function. First, let
        \(\mathcal{E}\) be the above system of equations with abelianisation
        constraints such that the set of solutions to \((X, Z, V)\) is
        \(\{(a_2^{2i + p}, a_2^j, a_2^{(2i + p)j}) \mid i, j \in \Z\}\).
        Then consider the system with all equations in \(\mathcal{E}\),
        together with the equation that \(X = Z^2 a_2^{p}\). Then the set of
        solutions to \((X, Z, V)\) in this new system is \(\{(a_2^{2j + p},
        a_2^{j}, a_2^{4j^2 + 4pj + p^2}) \mid  j \in \Z\}\).
        Further adding the equation \(4Z_2 + 4pZ + p^2 = V\), where
        \(Z_2\) is a new variable, gives that the set of solutions to
        \((Z, Z_2)\) is \(\{(a_2^j, a_2^{j^2}) \mid j \in \Z\}\). We
        have now encoded \(\Z\) with addition and squaring into
        \(\DPAB{G}\), which as discussed at the beginning of
        this proof is enough to encode Hilbert's 10th problem over
        \(\Z\), and thus conclude undecidability.
    \end{proof}

    We can now prove our classification of Diophantine Problems with
    abelianisation constraints in cyclically pinched groups. The proof
    relies heavily on \cref{cyc-pinched-not-power-und} and the decidability
    of the Diophantine Problem with abelianisation constraints in
    one-relator groups with centre and virtually cyclic abelianisation
    \cref{orgwc-decidable-cor}. Most of the proof shows that all
    cyclically pinched groups that are not `obviously' one-relator
    groups with centre, do in fact lie in the class studied in
    \cref{cyc-pinched-not-power-und}.

    \begin{theorem_no_number}[\ref{thm:cycpinDPAB}]
        Let \(G\) be a cyclically pinched group. Then \(\DPAB{G}\) is decidable
        if and only if \(G\) has exactly two generators. In the
        case that \(G\) has two generators, \(\DPAB{G}\) is EDT0L.
    \end{theorem_no_number}

    \begin{proof}
        Case 1: \(G\) has two generators. Then \(G\) is a one-relator group with
        centre, with the presentation \(\langle a_1, b_1 \mid a_1^p = b_1^q
        \rangle\), for some \(p, q \in \Z\). If \(p \neq 1\) or \(q \neq 1\),
        then \(G\) has a virtually cyclic abelianisation, and so
        \cref{orgwc-decidable-cor} tells us that \(\DPAB{G}\) is decidable (and
        EDT0L). Otherwise, \(p = q = 1\), and so \(G \cong \Z^2\). As
        an abelian group, abelianisation constraints in \(\Z^2\) are trivial,
        and so \(\DPAB{\Z^2}\) is decidable (and EDT0L; see for example
        \cite{EvettsLevine22}*{Corollary 3.13}) .

        Case 2: \(G\) has at least three generators. Then \(G\) has the presentation
        \[
            \langle a_1, \ldots, a_r, b_1, \ldots, b_s \mid u = v \rangle,
        \]
        where \(r \geq 2\), \(s \geq 1\), \(u \in \{a_1, a_1^{-1}, \ldots, a_n,
        a_n^{-1}\}^\ast\) and \(v \in \{b_1, b_1^{-1}, \ldots, b_s,
        b_s^{-1}\}^\ast\). We first show that we can assume without loss of
        generality that \(u\) is cyclically reduced. Suppose that \(u \equiv x
        w x^{-1}\), for some freely reduced words \(x, w \in \{a_1, a_1^{-1},
        \ldots, a_r, a_r^{-1}\}^\ast\), with \(w\) cyclically reduced. For each \(i \in \{1, \ldots, r\}\), apply
        the Tiezte transformation to the presentation \(\langle a_1, \ldots, a_r, b_1, \ldots, b_s \mid x w x^{-1} = v \rangle\) that introduces the redundant
        generator \(c_i = x a_i x^{-1}\). Then \(G\) has the presentation:
        \[
            \langle a_1, \ldots, a_r, c_1, \ldots, c_r, b_1, \ldots, b_s \mid
            c_i = x a_i x^{-1}, \bar{w} = v \rangle,
        \]
        where \(\bar{w}\) is the word obtained from \(w\) by replacing each
        occurrence of \(a_i\) with \(c_i\). Then \(\bar{w}\) is cyclically
        reduced. In addition, we can now apply the Tiezte transformation that
        removes the redundant generators \(a_i = x^{-1} c_i x\) for all
        \(i\), giving the following presentation for \(G\):
        \[
            \langle c_1, \ldots, c_r, b_1, \ldots, b_s \mid
            \bar{w} = v \rangle.
        \]
        This is a cyclically pinched presentation where the first of the two
        equal words is cyclically reduced. By returning to our original notation
        of \(a_i\)s and \(u\), we can now assume without loss of generality that
        \(u\) is cyclically reduced.
        
        Since \(u\) is cyclically reduced and the only cyclic permutations of
        powers of \(a_1\) and \(a_2\) are themselves powers of \(a_1\) and
        \(a_2\),  we have that \(u\) is conjugate to a power of \(a_1\) or
        \(a_2\) if and only if \(u\) is equal to a power of \(a_1\) or \(a_2\).
        Thus if \(u\) is not equal to a power of \(a_1\) or \(a_2\), we can
        apply \cref{cyc-pinched-not-power-und} to conclude that \(\DPAB{G}\) is
        undecidable. So now suppose that \(u\) is a power of \(a_1\) or \(a_2\);
        without loss of generality suppose \(u= a_1^p\), where \(p \in \Z
        \setminus \{0\}\). If \(r > 2\) then applying the Tietze transformation
        that swaps the labels \(a_1\) and \(a_3\) gives an isomorphic
        presentation where \(u\) is not a power of \(a_1\) or \(a_2\), and so
        \cref{cyc-pinched-not-power-und} can be applied to give undecidability.
        Similarly, if \(s > 1\), then either \(v\) is a power of some \(b_i\),
        in which case swapping the labels \(a_1\) and \(b_2\) also gives
        undecidability; otherwise \(v\) is not a power of \(b_1\) or \(b_2\), in
        which case swapping the labels \(a_1\) with \(b_1\) and \(a_2\) with
        \(b_2\) gives undecidability. This leaves the case where \(r = 2\) and
        \(s = 1\); that is \(G\) has the presentation
        \[
            \langle a_1, a_2, b_1 \mid a_1^p = b_1^q \rangle,
        \]
        where \(q \in \Z \setminus \{0\}\). Now apply a Tietze transformation
        introducing the new generator \(c = a_1 a_2^{-1}\). Thus \(G \cong
        \langle a_1, a_2, b_1, c \mid a_1^p = b_1^q, c = a_1 a_2^{-1} \rangle\).
        Since \(a_1 = c a_2\), we can apply a Tietze transformation removing the
        redundant generator \(a_1\), giving \(G \cong\langle a_2, b_1, c
        \mid (c a_2)^p = b_1^q \rangle\). Finally,
        applying the Tietze transformation that renames \(c\) to be
        \(a_1\), gives that \(G \cong \langle a_1, a_2, b_1 \mid
        (a_1 a_2)^p = b_1^q \rangle\). Again, \cref{cyc-pinched-not-power-und}
        gives that \(\DPAB{G}\) is undecidable.
    \end{proof}

\section{Remarks and open questions}

In this paper we exhibited several classes of one-relator and free-by-cyclic groups where the Diophantine problem with abelianisation constraints is decidable if the abelianisation is `small' i.e. virtually $\Z$, and groups where \(\DPAB{G}\) is undecidable and the abelianisation has torsion-free rank greater than 2.
There are examples of free by cyclic groups with centre of the form  
$F_n \rtimes_\phi \mathbb{Z}$ and $\mathbb{Z}^2$ abelianisation where we 
do not yet know whether 
\(\DPAB{G}\) 
is decidable. 
For instance we do not know whether the group  
\(\langle a, b, t \mid tat^{-1} = b, tbt^{-1} = a \rangle\) has a decidable  
\(\DPAB{G}\).

%{\color{blue}
%[Bob: I think it would be helpful if around here we give a concrete example of a group of the form $F_n \rtimes_\phi \mathbb{Z}$ with centre and $\mathbb{Z}^2$ abelianisation where we don't yet know whether (DP,ab) is decidable.]
%}
%{\color{purple} Alex: I think \(\langle a, b, t \mid tat^{-1} = b, tbt^{-1} = a \rangle\) would be a good example. This is very similar to \(F_2 \times \Z\), but instead the abelianisation map
%maps \(F_2\) to a copy of \(\Z\). Coincidentally, this is also one-relator.}
%{\color{blue}
%[Bob: Ok, thanks Alex, yes this example would be fine with me.]
%}

It should be noted, however, that the Diophantine Problem itself, so without
constraints, for free-by-cyclic groups remains an open and
difficult problem. Recent related results show that the Diophantine Problem
is undecidable in one-relator groups \cites{BobAlexBG, CFBG}. More
generally, it is also open whether there is an ascending HNN-extensions of free
groups. It has recently been proved \cite{Logan25} that the conjugacy problem is
decidable in ascending HNN-extensions of free groups. 

A class of groups where \(\DP{G}\) is known to be decidable \cite{equations_VDP} but it is unclear whether adding constraints changes this is the class of even dihedral Artin groups.

\begin{qu}
    Is \(\DPAB{G}\) decidable if $G$ is an even dihedral Artin group? Each such group is isomorphic to the Baumslag-Solitar group $BS(p,p)=\langle a, t \mid ta^p t^{-1}=a^p\rangle$, $p\geq 2$.
\end{qu}

The groups $BS(p,p)$ above have the torsion-free rank of the abelianisation equal to $2$. The case of soluble Baumslag-Solitar groups, which have torsion-free rank of the abelianisation equal to $1$, is also open, both in terms of the \(\DP{G}\) and \(\DPAB{G}\). Note that in this case, both of these questions are equivalent
as solvable Baumslag-Solitar groups have finite commutator width.

\begin{qu}
    Is \(\DPAB{G}\) decidable in $BS(1,p)=\langle a, t \mid ta t^{-1}=a^p\rangle$, $p\geq 2$?
\end{qu}

It would be interesting to classify which GBS groups have decidable Diophantine
Problem, but this is likely a difficult problem since it remains open even for
the Baumslag-Solitar groups $BS(1,n)$.  A key theme in this area is
that Decidability of \(\DPAB{G}\) is influenced by the torsion free rank of the
abelianisation of the group under consideration. Indeed, as explained in
\cite{CiobanuGarreta}  the decidability of  \(\DPAB{G}\)  when $G$ has decidable
Diophantine Problem and its abelianisation has torsion-free rank 1 remains an
open problem. More precisely, the following is an open problem:  

\begin{qu}
Does there exist a group $G$ with the following properties: 
\begin{enumerate}
\item $G$ is finitely generated with decidable Diophantine Problem; 
\item  the abelianisation of $G$ is virtually cyclic; and 
\item  \(\DPAB{G}\) is undecidable.  
\end{enumerate}   
\end{qu}
%{\color{red} Don't torus knot groups provide examples of these?}

While undecidability in all our results relies on encoding arithmetic into systems of equations, will the same hold for single equations?

\begin{qu}
    Do our undecidability results hold if we only look at single equations with abelianisation constraints?
\end{qu}

\bibliography{references}

\end{document}